\newtheorem{remark}{Remark}[section]
\newcommand{\ds}{\displaystyle}
\newcommand{\f}{\frac}
\newcommand{\p}{\partial}
\newcommand{\fff}{\mbox{\boldmath $f$}}
\newcommand{\ggb}{\mbox{\boldmath $g$}}
\newcommand{\uu}{\mbox{\boldmath $u$}}
\newcommand{\xx}{\mbox{\boldmath $x$}}
\newcommand{\yy}{\mbox{\boldmath $y$}}
\newcommand{\xib}{\mbox{\boldmath $\xi$}}
\begin{document}

\title{Wellposedness and regularity of steady-state two-sided variable-coefficient conservative space-fractional diffusion equations
\thanks{This work was partially supported by the National Natural Science Foundation of China under Grants 91130010, 11471194, and 11571115, and the Taishan research project of Shandong Province, and the National Science Foundation under Grants EAR-0934747 and DMS-1216923, and the OSD/ARO MURI Grant W911NF-15-1-0562.}}
\author{Danping Yang \thanks{Department of Mathematics, East China Normal University, Shanghai,
200241, China. E-mail address: dpyang@math.ecnu.edu.cn.}
\and Hong Wang \thanks{Department of Mathematics, University of South Carolina, Columbia,
South Carolina 29208, USA. E-mail address: hwang@math.sc.edu.}}

\maketitle
\begin{abstract}
We study the Dirichlet boundary-value problem of steady-state two-sided variable-coefficient conservative space-fractional diffusion equations. We show that the Galerkin weak formulation, which was proved to be coercive and continuous for a constant-coefficient analogue of the problem, loses its coercivity. We characterize the solution to the variable-coefficient problem in terms of the solutions of second-order diffusion equations along with a two-sided fractional integral equation. We then derive a Petrov-Galerkin formulation for this problem and prove that the weak formulation is weakly coercive and so the problem is well posed. We then prove high-order regularity estimates of the true solution in a properly chosen norm of Riemann-Liouville  derivatives.
\end{abstract}

\begin{keywords}
two-sided variable-coefficient fractional diffusion equation, Petrov-Galerkin formulation, weak coercivity, wellposedness
\end{keywords}

\begin{AMS}
65M25,65M60,65Z05,76M10,76M25,80A10,80A30
\end{AMS}

\pagestyle{myheadings}
\thispagestyle{plain}

\markboth{Yang and Wang}{Analysis on variable-coefficient fractional diffusion equations}

\pagestyle{myheadings}
\thispagestyle{plain}

\markboth{Wang and Yang} {Analysis on fractional elliptic equation}

\section{Introduction}

In recent years nonlocal models are emerging as powerful tools for modeling challenging phenomena including overlapping microscopic and macroscopic scales, anomalous transport, and long-range time memory or spatial interactions in nature, science, social science, and engineering \cite{MetBar,MetKla00,MetKla04,Wes}. 
Data-driven fractional-order differential operators can be constructed to model a specific phenomenon instead of the current practice of tweaking the coefficients that multiply pre-set integer-order differential operators. It was shown that the misspecification of physical models using an integer-order partial differential equation often leads to a variable coeﬃcient fit (struggling to fit the data at each location, for example) whereas a physical model using a fractional-order partial differential equation can fit all the data with a constant coefficient \cite{BenSch}. In short, nonlocal models open up great opportunities and flexibility for modeling and simulation of multiphysical phenomena, e.g. from local to nonlocal dynamics \cite{Wes}. Because of their significantly improved modeling capabilities, various related but different nonlocal models, including fractional Laplacian, nonlocal diffusion and peridynamics, and fractional partial differential equations, have been developed to describe diverse nonlocal phenomena. 

The fractional Laplacian operator $(-\Delta u)^s$ of order $0 < s < 1$ has been used to model nonlocal behavior in many physical problems \cite{App,BaoJia,DuoZha,Las} and has appeared as the infinitesimal generator of a stable L\'{e}vy process \cite{App,GaoDua1,GaoDua2,Val}. $(-\Delta)^s$ can be defined as a pseudodifferential operator of symbol $|\xib|^{2s}$ on the entire space $\mathbb{R}^d$ \cite{App}
\begin{equation}\label{FracLap:e1}
(-\Delta)^{s} u = \mathcal{F}^{-1} \big ( | \xib |^{2s} \mathcal{F}{u}(\xib) \big), \qquad \forall u \in \mathcal{S}
\end{equation}
where $\mathcal{S}$ refers to the Schwartz space and $\mathcal{F}$ denotes the Fourier transform \cite{AdaFou}. It can equivalently be defined by the prescription \cite{NezPal}
\begin{equation}\label{FracLap:e2}
(-\Delta)^{s} u(\xx) = C(d,s) ~\mathrm{P.V.} \int_{\mathbb{R}^d} \f{u(\xx)-u(\yy)}{|\xx-\yy|^{d+2s}}d\yy, 
\end{equation}
where the parameter $C(d,s)$ depends on the space dimension $d$ and the order $s$ of the fractional Laplacian. The $(-\Delta)^{s}$ can be extended to an integer-order partial differential equation on the half space
$\mathbb{R}_+^{d+1}$ via a Dirichlet-to-Neumann mapping \cite{CafSil}.

However, subtlety occurs in the corresponding ``boundary value" problem of the fractional Laplacian when the domain $\Omega$ under consideration is bounded, as there are more than one defitions of $(-\Delta)^{s}$ in the literature which are not necessarily equivalent. A feasible definition is to restrict the function $u$ in (\ref{FracLap:e2}) to those supported in $\Omega$. And the corresponding boundary value problem is formulated as 
\begin{equation}\label{FracLap:e3}\begin{array}{rcll}
(-\Delta)^s u(\xx) &=& f(\xx), \quad & \xx \in \Omega, \\[0.025in]
u(\xx) &=& 0, & \xx \in \Omega^c = \mathbb{R}^d\backslash \Omega.
\end{array}\end{equation}
By the Feynman-Kac formula \cite{App,Oks}: $u(x)$ can be obtained by an ensemble of the boundary data at the feet of the sample paths of a stochastic L\'{e}vy process that start from $\xx$ and just jump out of domain $\Omega$. As the sample paths of a L\'{e}vy process admit jumps of arbitrary lengths, the boundary data must be imposed on the entire complement $\Omega^c$ of the domain $\Omega$. 
On the other hand, for the Laplacian equation ((\ref{FracLap:e3}) with $s=1$), the underlying stochastic process is a Brownian motion that has continuous sample paths that intersect the boundary $\p \Omega$ of the domain $\Omega$ almost surely. Hence, the boundary condition needs only be specified at the boundary $\p \Omega$.

Alternatively, let $\{\lambda_n,\psi_n\}_{n=1}^\infty$ be the set of eigenvalues and ($L^2$ orthogonal and) normalized eigenfunctions of the Laplace operator in $\Omega$ with the homogeneous Dirichlet boundary condition on $\p \Omega$.
\begin{equation}\label{FracLap:e4}
(-\Delta)^s u = \sum_{n=1}^\infty (u,\psi_n) \lambda_n^s \psi_n, \quad \forall u \in C^\infty_0(\Omega).
\end{equation}
In \cite{StiTor} $(-\Delta)^s$ defined in (\ref{FracLap:e4}) was extended to a integer-order partial differential equation posed on $\Omega \times (0,\infty)$ by generalizing the result in \cite{CafSil}. This result was then utilized in \cite{NocOta} in the numerical approximation of the fractional Laplacian $(-\Delta)^s$ defined in (\ref{FracLap:e4}), by solving the integer-order equation on $\Omega \times (0,\infty)$ via graded meshes in the extended variable. An alternative numerical discretization of the fractional Laplacian defined by (\ref{FracLap:e4}) was presented in \cite{YanLiu} via a discrete version of the spectral decomposition of (\ref{FracLap:e4}).

The constitutive models in peridynamics depend on finite deformation vectors, instead of deformation gradients in classical constitutive models \cite{Sil00,SilEpt}. Consequently, peridynamic models yield nonlocal mathematical formulations that are based on long-range interactions and present more appropriate representation of discontinuities in displacement fields and the description of cracks and their evolution in materials than classical continuum solid mechanics that are based on local interactions.

For instance, a bond-based linear peridynamic model takes the form \cite{CheGun,Sil00,SilEpt}
\begin{equation}\label{Peri:e1}\begin{array}{rlll}
\ds C(d,\delta,k) \int_{B_{\delta}(\xx)} \frac{(\xx - \yy) \otimes (\xx - \yy)}{|\xx - \yy|^3} \big [ \uu(\xx) - \uu(\yy) \big] d \yy
& = \fff(\xx), &\xx\in \Omega,\\
\uu(\xx) &= \ggb(\xx), ~& \xx \in \Omega_\delta.
\end{array}\end{equation}
Here $\uu$ is the displacement vector, $\fff$ is the prescribed body force density field, $\Omega_\delta$ denotes a boundary zone surrounding $\Omega$ with width $\delta > 0$, and $\ggb$ is the prescribed displacement imposed on the domain $\Omega_\delta$. The constant $C$ depends on the space dimension $d$, the radius $\delta$, and the bulk modulus $k$.  The material horizon $B_{\delta}(\xx)$ is a closed ball centered at $\xx$ with the radius $\delta$. 

In other words, all the interactions in peridynamic models are allowed to be nonlocal, they are indeed assumed to be short ranged so the particle at $\xx$ does not have any interaction with particles outside of $B_\delta(\xx)$. Moreover, the ``boundary condition" is imposed neither on the classical boundary $\p \Omega$ of the domain $\Omega$ nor the entire complement $\Omega^c$ of $\Omega$ as in 
(\ref{FracLap:e3}), but rather on the ``collar" $\Omega_\delta$ of the domain $\Omega$.

In the context of one space dimension $d=1$, the peridynamic model (\ref{Peri:e1}) reduces to the nonlocal diffusion model \cite{DelGun,DuGun} which corresponds to (\ref{FracLap:e3}) with $d=1$ and $s=0$ and $\mathbb{R}^d$ being replaced by $B_\delta(x)$.

A variable-coefficient peridynamic model was derived in \cite{MenDu} in which an extra coefficient $K(\xx) + K(\yy)$ appears in the integrand in (\ref{FracLap:e2}). A  variable-coefficient analogue $(-\nabla (K(\xx) \nabla ))^s$ of (\ref{FracLap:e4}) is defined by the right-hand side of (\ref{FracLap:e4}) except that the  $\{\lambda_n,\psi_n\}_{n=1}^\infty$ are now the set of eigenvalues and eigenfunctions of the operator $-\nabla (K(\xx) \nabla)$ in $\Omega$ with the homogeneous Dirichlet boundary condition on $\p \Omega$. 

Finally, we turn to fractional differential equations (FDEs). Classical Fickian diffusion equation was derived under the assumptions of (i) the existence of a mean free path and (ii) the existence of a mean waiting time in the underlying identical and independently distributed random particle jumps \cite{Ein,Fic,Pea}. Under these assumptions, long walks in the same direction are rare so the variance of a particle excursion distance is finite. The central limit theorem concludes that the probability density function of finding a particle somewhere in space satisfies a canonical Fickian diffusion equation and thus gives rise to a probabilistic description of a normal diffusion precess \cite{MeeSik}.

However, the random particle movements in heterogeneous media often undergo long jumps and so violate the assumptions (i) and (ii). These processes may have arbitrarily long jumps and so have large deviations from the stochastic process of Brownian motion.  This is the reason why these processes cannot be described appropriately by the second-order diffusion equation. Consequently, the probability density function of finding a particle somewhere in space satisfies a L\'{e}vy distribution, which satisfies a space-fractional diffusion equation and thus gives rises a probabilistic description of an anomalous diffusion process. This explains why FDEs provide a more appropriate description of anomalous diffusion processes than classical Fickian diffusion equation and why FDE models have been used in many applications \cite{MeeSik,MetKla00,MetKla04,Pod,Wes}. 

We take the two-sided variable-coefficient conservative Caputo space-fractional diffusion equation as an example to demonstrate the idea
\begin{equation}\label{Model:e1}\begin{array}{c}
\ds - D \bigl [ K(x) \bigl (\theta ~{}^{l}_aI_x^{\beta} D + (1-\theta) ~{}^{r}_xI_b^{\beta}D \bigr )u \big] = f(x), \quad  x \in (a,b), \\[0.05in]
u(a) = u(b) = 0.
\end{array}\end{equation}
Here $D$ is the first-order differential operator, $2-\beta$ with $0 < \beta < 1$ represents the order of anomalous diffusion of the problem, $0 < K_m \le K(x) \le K_M <+\infty$ is the diffusivity coefficient, and $0 \le \theta \le 1$ indicates the relative weight of forward versus backward transition probability, and $f$ is the source term \cite{BenWhe,CasCar,MeeSik,WheMee,ZhaBen}. The left and right fractional integrals of order $\sigma > 0$ are defined for any $w \in L^1(a,b)$ by \cite{AdaFou,Pod,SamKil}
\begin{equation}\label{Model:e2}
{}^{l}_aI_x^{\sigma}w(x) := \f1{\Gamma(\sigma)} \int_a^x \f{w(s)}{(x-s)^{1-\sigma}}ds, \quad {}^{r}_xI^{\sigma}_bw(x) := \f1{\Gamma(\sigma)} \int_x^b \f{w(s)}{(s-x)^{1-\sigma}}ds
\end{equation}
where $\Gamma(\cdot)$ is the Gamma function. (\ref{Model:e1}) is derived by combining a conventional mass balance law in terms of the flux $J$
\begin{equation}\label{Model:e3}
D J = f
\end{equation}
with the fractional Fick's law that accounts for the contributions of the particles that jumps to $x$ from any point in the domain $(a,b)$ \cite{SchBen}
\begin{equation}\label{Model:e4}
J = - K(x) \bigl (\theta ~{}^{l}_aI_x^{\beta} D + (1-\theta) ~{}^{r}_xI_b^{\beta}D \bigr )u. 
\end{equation}

Although FDEs share some common mathematical properties with fractional Laplacian, peridynamics and nonlocal diffusion models due to their common nonlocality, the wellposedness of the boundary-value problems of the FDEs is more subtle to analyze than those of the other nonlocal models partly due to the following reason: The Fractional Laplacian operators $(-\Delta)^s$ defined in (\ref{FracLap:e2}) or (\ref{FracLap:e4}) and the peridynamic (and nonlocal diffusion) models (\ref{Peri:e1}) as well as their variable-coefficient analogues can be formulated as a minimization of an quadratic energy functional and are symmetric and coercive with respect to appropriate (possibly weighted) fractional Sobolev spaces \cite{AdaFou,DelGun,DuGun,Tho}. Hence, the Lax-Milgram theorem concludes that the corresponding boundary-value problems are wellposed exactly like in the context of integer-order PDEs \cite{Eva}. 

However, in the FDE in (\ref{Model:e1}) the external operator is a first-order differential operator $D$ and the internal operator is a two-sided fractional differential operator of order $1-\beta$. Hence, the FDE problem (\ref{Model:e1}) cannot be formulated as the minimization of an energy functional and the inherent trial and test spaces are fundamentally different especially in the presence of a variable diffusivity coefficient $K$. This makes the analysis of the FDE problem (\ref{Model:e1}) difficult to analyze. 

In a pioneer and foundational work on the wellposedness of problem (\ref{Model:e1}) with a constant diffusivity coefficient $K$, Ervin and Roop \cite{ErvRoo05} derived a Galerkin weak formulation and proved that its bilinear form is coercive and bounded on the product space $H^{1-\beta/2}_0(a,b) \times H^{1-\beta/2}_0(a,b)$ even though the problem cannot be formulated as the minimization of an energy functional. 
Thus, the Lax-Milgram theorem concludes that the problem is well posed \cite{ErvRoo05,Eva}. 

In this paper we show that the bilinear form of the Galerkin formulation may lose its coercivity for problem (\ref{Model:e1}) with a variable diffusivity coefficient. Numerical results show that the Galerkin finite element method does not necessarily converge in this case \cite{WanYanZhu}! We then characterize the solution to problem (\ref{Model:e1}) in terms of the solutions of second-order diffusion equations along with an integral equation. We then accordingly derive a Petrov-Galerkin formulation for problem (\ref{Model:e1}) and prove that the bilinear form of the Petrov-Galerkin weak formulation is weakly coercive and so problem (\ref{Model:e1}) is well posed. 

\section{Preliminaries} 

Let $C[a,b]$ denote the space of continuous functions on $[a,b]$, and $C^\infty_0(a,b)$ denote the space of infinitely many times differentiable functions on $(a,b)$ that are compactly supported in $(a,b)$. Let $L^p(a,b)$, with $1 \le p \le +\infty$, be the Banach spaces of $p$-th power Lebesgue integrable functions on $(a,b)$ and $W^{m,p}(a,b)$ be the Sobolev spaces of functions whose weak derivatives up to order $m$ are in $L^p(a,b)$ \cite{AdaFou,SamKil}. For any $\mu > 0$, define the (semi) norms \cite{AdaFou,ErvRoo05}
$$ | w |_{H^{\mu}(\mathbb{R})} := \big \| |\omega|^\mu \mathcal{F}(w) \big \|_{L^2(\mathbb{R})},
\qquad \| w \|_{H^{\mu}(\mathbb{R})} :=  \big (\|w\|^2_{L^2(\mathbb{R})} + |w|_{H^{\mu}(\mathbb{R})}^2 \bigr)^{1/2},$$
and the fractional Sobolev space $H^{\mu}(\mathbb{R})$ is the completion of $C^\infty_0(\mathbb{R})$ with respect to the norm $\| \cdot \|_{H^{\mu}(\mathbb{R})}$.

Let $0 < \sigma < 1$, $m$ be a postive integer, and $\mu := m - \sigma$. Then the left and right (Caputo) fractional derivatives of order $\mu$ are defined to be
$${}_a^l D_x^{\mu}w := {}_a^lI_x^{\sigma} D^m w, \quad {}_x^r D_b^{\mu}w := {}_x^r I_b^{\sigma}~ (-D)^m w.$$
We then introduce the corresponding (semi) norms for the left, right, and weighted two-sided fractional derivatives, respectively for $w \in C^\infty_0(\mathbb{R})$ \cite{ErvRoo05,SamKil}:
\begin{equation}\label{FDE:e2}\begin{array}{ll}
|w|_{J_l^{\mu}(\mathbb{R})} &:= \big \| {}_{-\infty}^{l}D_x^{\mu}w \big \|_{L^2(\mathbb{R})}, \qquad \|w\|_{J_l^{\mu}(\mathbb{R})} := \big (\|w\|^2_{L^2(\mathbb{R})} +  |w|_{J_l^{\mu}(\mathbb{R})}^2 \bigr)^{1/2}, \\[0.075in]
|w|_{J_r^{\mu}(\mathbb{R})} &:= \big \| {}_x^r D_\infty^{\mu}w \big \|_{L^2(\mathbb{R})}, ~~ \qquad \|w\|_{J_r^{\mu}(\mathbb{R})} := \big (\|w\|^2_{L^2(\mathbb{R})} +  |w|_{J_r^{\mu}(\mathbb{R})}^2 \bigr)^{1/2}, \\[0.075in]
|w|_{J^{\mu,\theta}(\mathbb{R})} &:= \big (\theta^2 | w |^2_{J_l^{\mu}(\mathbb{R})}+(1-\theta)^2 | w |^2_{J_r^{\mu}(\mathbb{R})} \bigr)^{1/2},\\[0.075in]
\|w\|_{J^{\mu,\theta}(\mathbb{R})} &:=  \big (\|w\|^2_{L^2(\mathbb{R})} + |w|_{J^{\mu,\theta}(\mathbb{R})}^2 \bigr)^{1/2}.
\end{array}\end{equation}
Let $J_l^{\mu}(\mathbb{R})$, $J_r^{\mu}(\mathbb{R})$, and $J^{\mu,\theta}(\mathbb{R})$ denote the spaces that are completion of $C^\infty_0(\mathbb{R})$ with respect to the norms $\| \cdot \|_{J_l^{\mu}(\mathbb{R})}$, $\| \cdot \|_{J_r^{\mu}(\mathbb{R})}$, and $\| \cdot \|_{J^{\mu,\theta}(\mathbb{R})}$, respectively. In addition, we let $J^{\mu}_{l,0}(a,b)$,  $J^{\mu}_{r,0}(a,b)$, $J^{\mu,\theta}_0(a,b)$, and $H^\mu_0(a,b)$ denote the function spaces that are the completion of $C^\infty_0(a,b)$ with respect to the norms $\| \cdot \|_{J^{\mu}_l(\mathbb{R})}$, $\| \cdot \|_{J^{\mu}_r(\mathbb{R})}$, $\| \cdot \|_{J^{\mu,\theta}(\mathbb{R})}$, and $\| \cdot \|_{H^{\mu}(\mathbb{R})}$, respectively. Finally, we let $H^{\mu}(a,b)$ be the fractional Sobolev space of order $\mu$ which can be defined to be the restriction of functions in $H^\mu(\mathbb{R})$ to the interval $(a,b)$, and $H^{-\mu}(a,b)$ be the dual space of $H^\mu_0(a,b)$ \cite{AdaFou}.

Below we cite some known results in the literature and prove some others that relate different fractional derivatives, spaces and (semi-)norms. We use $C$ to denote a generic constant that may assume different values at difference occurrences. We use $C_i$ to denote fixed constants. 

The following lemmas were proved in \cite{ErvRoo05,Pod,SamKil}.

\begin{lemma}\label{lem:2C1} {\rm (Fractional Poincar\'{e} inequality)} Let $\mu > 1/2 $. Then there exists a positive constant $C_0 = C_0(\mu)$ such that the following inequality holds
$$\| w \|_{L^2(a,b)} \le C_0 | w |_{H^\mu(a,b)}, \qquad \forall\ w \in H^\mu_0(a,b).$$
\end{lemma}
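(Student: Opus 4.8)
The plan is to combine the Fourier-side definition of $H^\mu(\mathbb{R})$ with a zero-extension and, crucially, with the fact that a function in $H^\mu_0(a,b)$ with $\mu>1/2$ vanishes at the endpoints. Since $C_0^\infty(a,b)$ is dense in $H^\mu_0(a,b)$ by definition, it suffices to prove the estimate for $w\in C_0^\infty(a,b)$; extend such a $w$ by zero to obtain $w\in C_0^\infty(\mathbb{R})$ with $\operatorname{supp}w\subset[a,b]$, and recall that Plancherel's identity gives $\|w\|_{L^2(a,b)}=\|w\|_{L^2(\mathbb{R})}=C\,\|\mathcal{F}(w)\|_{L^2(\mathbb{R})}$, while $|w|_{H^\mu(a,b)}$ coincides (after identifying $w$ with its zero extension, as in the definition of $H^\mu_0(a,b)$) with $|w|_{H^\mu(\mathbb{R})}=\big\||\omega|^\mu\mathcal{F}(w)\big\|_{L^2(\mathbb{R})}$.

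Next I would exploit $w(a)=0$. Using the Fourier inversion formula and subtracting $e^{\mathrm{i}\omega a}\mathcal{F}(w)(\omega)$, whose integral is a multiple of $w(a)=0$, one gets for every $x\in[a,b]$
\[
w(x)=C\int_{\mathbb{R}}\mathcal{F}(w)(\omega)\,\big(e^{\mathrm{i}\omega x}-e^{\mathrm{i}\omega a}\big)\,d\omega,\qquad \big|e^{\mathrm{i}\omega x}-e^{\mathrm{i}\omega a}\big|\le\min\{2,(b-a)|\omega|\}.
\]
Inserting the factor $|\omega|^{\mu}|\omega|^{-\mu}$ and applying the Cauchy--Schwarz inequality yields $\|w\|_{L^\infty(a,b)}\le C\,A_\mu^{1/2}\,|w|_{H^\mu(\mathbb{R})}$ with
\[
A_\mu:=\int_{\mathbb{R}}|\omega|^{-2\mu}\min\{2,(b-a)|\omega|\}^2\,d\omega ,
\]
and therefore $\|w\|_{L^2(a,b)}\le(b-a)^{1/2}\|w\|_{L^\infty(a,b)}\le C_0\,|w|_{H^\mu(\mathbb{R})}$, which is the asserted inequality (with $C_0$ the product of the above constants, depending on $\mu$ and on $b-a$; the latter dependence can be normalized away by rescaling $(a,b)$ to a reference interval).

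The one point that requires care --- the main obstacle --- is the finiteness of $A_\mu$. Near $\omega=0$ the integrand behaves like $(b-a)^2|\omega|^{2-2\mu}$, integrable exactly when $\mu<3/2$, and near $\omega=\infty$ it behaves like $4|\omega|^{-2\mu}$, integrable exactly when $\mu>1/2$; so the hypothesis $\mu>1/2$ is used precisely to tame the high frequencies, and the argument as stated proves the lemma for $1/2<\mu<3/2$ --- which in particular covers the exponent $\mu=1-\beta/2$, $0<\beta<1$, needed later in the paper. For a general non-half-integer $\mu>1/2$ one repeats the computation after replacing $e^{\mathrm{i}\omega(x-a)}-1$ inside the integral by $e^{\mathrm{i}\omega(x-a)}-\sum_{j=0}^{N-1}(\mathrm{i}\omega(x-a))^j/j!$ with $N=\lceil\mu-1/2\rceil$; this is admissible because $w^{(j)}(a)=0$ for $0\le j\le N-1$, the resulting multiplier is bounded by $C\min\{((b-a)|\omega|)^{N},\,1+((b-a)|\omega|)^{N-1}\}$, and the associated weight integral converges precisely for $N-1/2<\mu<N+1/2$, the half-integer cases being recovered by a routine limiting argument. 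An equivalent and slightly shorter route, closer to \cite{ErvRoo05,Pod,SamKil}, is to write $w={}_a^lI_x^{\mu}\big({}_a^lD_x^{\mu}w\big)$ (valid since $w$ and its derivatives vanish at $a$), bound $\|w\|_{L^2(a,b)}$ by $\|{}_a^lD_x^{\mu}w\|_{L^2(a,b)}$ using the $L^2$-boundedness of the Riemann--Liouville fractional integral on a bounded interval, and then use the Fourier symbol $(\mathrm{i}\omega)^{\mu}$ of the left fractional derivative to bound $\|{}_a^lD_x^{\mu}w\|_{L^2(a,b)}$ by $|w|_{H^\mu(\mathbb{R})}$.
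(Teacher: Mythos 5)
The paper does not prove this lemma at all: it is quoted as a known result from \cite{ErvRoo05,Pod,SamKil}, so there is no in-paper argument to compare against. Your Fourier-side proof is correct for $1/2<\mu<3/2$, which indeed covers every exponent the paper actually uses ($\mu=1-\beta/2$ and $\mu=1-\beta$ with $0<\beta<1/2$ or $0<\beta<1$): the density reduction, the identification of $|w|_{H^\mu(a,b)}$ with $\big\||\omega|^\mu\mathcal{F}(w)\big\|_{L^2(\mathbb{R})}$ for the zero extension, the subtraction of $e^{\mathrm{i}\omega a}\mathcal{F}(w)$ using $w(a)=0$, and the convergence analysis of $A_\mu$ are all sound. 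Two remarks. First, in the extension to general $\mu$, the phrase ``recovered by a routine limiting argument'' at the half-integers is the one soft spot: the constant $A_\mu$ blows up as $\mu\to N\pm 1/2$, so you cannot literally pass to the limit; what works is to split the frequency integral at $|\omega|=1$ and use the Taylor bound with $N+1$ terms on $\{|\omega|\le 1\}$ and with $N$ terms on $\{|\omega|>1\}$, which handles $N-1/2<\mu<N+3/2$ in one stroke. Second, your closing alternative --- $w={}_a^lI_x^{\mu}\big({}_a^lD_x^{\mu}w\big)$ (valid for $w\in C^\infty_0(a,b)$ by the semigroup property of Lemma \ref{lem:2C4}), Young's inequality for the convolution kernel $x_+^{\mu-1}/\Gamma(\mu)\in L^1(0,b-a)$, and the Plancherel identity $\big\|{}_{-\infty}^{l}D_x^{\mu}w\big\|_{L^2(\mathbb{R})}=|w|_{H^\mu(\mathbb{R})}$ --- is actually the cleaner route: it works uniformly for all $\mu>0$ with the explicit constant $(b-a)^\mu/\Gamma(\mu+1)$, avoids the half-integer issue entirely, and is essentially the argument in the cited references; you could have led with it and dropped the Taylor-remainder machinery altogether.
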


\begin{lemma}\label{lem:2C2} Let $\mu > 0$ and $\mu \neq m - 1/2$ with $m \in \mathbb{N}$. The spaces $J^{\mu}_{l,0}(a,b)$, $J^{\mu}_{r,0}(a,b)$, and $H^\mu_0(a,b)$ are equal with equivalent semi-norms and norms, i.e., there exist positive constants $C_1 = C_1(\mu)$ and $C_2 = C_2(\mu)$ such that
$$\begin{array}{l}
C_1 | w |_{H^\mu(a,b)} \le |w|_{J_l^{\mu}(a,b)} = |w|_{J_r^{\mu}(a,b)} \le C_2 | w |_{H^\mu(a,b)}, \\[0.1in]
C_1 \| w \|_{H^\mu(a,b)} \le \|w\|_{J_l^{\mu}(a,b)} = \|w\|_{J_r^{\mu}(a,b)} \le C_2 \| w \|_{H^\mu(a,b)}.
\end{array}$$
\end{lemma}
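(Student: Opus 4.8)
The plan is to transfer everything to the whole real line by extension by zero and then compute with the Fourier transform. Fix $w \in C_0^\infty(a,b)$ and let $\tilde w \in C_0^\infty(\mathbb{R})$ be its extension by zero. First I would record the Fourier symbols of the Riemann--Liouville derivatives on $\mathbb{R}$: combining $\mathcal{F}\bigl({}_{-\infty}^l I_x^\sigma g\bigr)(\omega) = (i\omega)^{-\sigma}\mathcal{F}(g)(\omega)$ with $\mathcal{F}(D^m g)(\omega) = (i\omega)^m \mathcal{F}(g)(\omega)$ (see \cite{Pod,SamKil}) and using $\mu = m - \sigma$ yields
$$\mathcal{F}\bigl({}_{-\infty}^l D_x^\mu \tilde w\bigr)(\omega) = (i\omega)^\mu \mathcal{F}(\tilde w)(\omega), \qquad \mathcal{F}\bigl({}_x^r D_\infty^\mu \tilde w\bigr)(\omega) = (-i\omega)^\mu \mathcal{F}(\tilde w)(\omega).$$
(Since $\tilde w$ is compactly supported, its Riemann--Liouville derivatives decay like $|x|^{-\mu-1}$ at infinity, hence lie in $L^2(\mathbb{R})$, so Plancherel applies.) Because $|(i\omega)^\mu| = |(-i\omega)^\mu| = |\omega|^\mu$, Plancherel's identity gives immediately
$$|w|_{J_l^\mu(a,b)} = \bigl\| {}_{-\infty}^l D_x^\mu \tilde w \bigr\|_{L^2(\mathbb{R})} = \bigl\| \, |\omega|^\mu \mathcal{F}(\tilde w) \bigr\|_{L^2(\mathbb{R})} = \bigl\| {}_x^r D_\infty^\mu \tilde w \bigr\|_{L^2(\mathbb{R})} = |w|_{J_r^\mu(a,b)} = |\tilde w|_{H^\mu(\mathbb{R})},$$
and the same for the full norms (the $L^2$ contributions coincide). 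This settles the middle equality of the statement.

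It remains to compare $|\tilde w|_{H^\mu(\mathbb{R})}$ with the intrinsic semi-norm $|w|_{H^\mu(a,b)}$ of the restriction space. One direction is free: using $\tilde w$ as a competitor one gets $|w|_{H^\mu(a,b)} \le C\,|\tilde w|_{H^\mu(\mathbb{R})} = C\,|w|_{J_l^\mu(a,b)}$, which is the left inequality. The reverse estimate $|\tilde w|_{H^\mu(\mathbb{R})} \le C_2\,\|w\|_{H^\mu(a,b)}$ is the substantive point, and this is exactly where the hypothesis $\mu \neq m - \tfrac12$ enters: it is the classical fact from fractional Sobolev trace/extension theory (Lions--Magenes; see also \cite{AdaFou}) that for non--half-integer $\mu$ the extension-by-zero operator is bounded from $H_0^\mu(a,b)$ into $H^\mu(\mathbb{R})$ --- equivalently, that $H_0^\mu(a,b)$ coincides, with equivalent norms, with $\{\, v : \tilde v \in H^\mu(\mathbb{R}) \,\}$. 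Granting this, on $C_0^\infty(a,b)$ we obtain $C_1 |w|_{H^\mu(a,b)} \le |w|_{J_l^\mu(a,b)} = |w|_{J_r^\mu(a,b)} \le C_2 |w|_{H^\mu(a,b)}$; the analogous chain for the full norms follows (the $L^2$ parts agree, and for $\mu > 1/2$ one may also invoke Lemma~\ref{lem:2C1}). Finally, the three spaces are all completions of the common dense subspace $C_0^\infty(a,b)$ with respect to pairwise equivalent norms, so the identity on $C_0^\infty(a,b)$ extends to Banach-space isomorphisms identifying $J_{l,0}^\mu(a,b) = J_{r,0}^\mu(a,b) = H_0^\mu(a,b)$.

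The main obstacle is the reverse inequality in the previous paragraph. The equality $|w|_{J_l^\mu} = |w|_{J_r^\mu} = |w|_{H^\mu(\mathbb{R})}$ on the line is a one-line Fourier computation; what needs the full strength of fractional trace/extension theory --- and what genuinely fails at $\mu = m - \tfrac12$ (the Lions--Magenes $H_{00}^{1/2}$-type phenomenon, where extension by zero is unbounded) --- is bounding the whole-line norm of the zero extension by the intrinsic norm on $(a,b)$. Everything else (the tail estimate justifying Plancherel, and the passage to completions) is routine.
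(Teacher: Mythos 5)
The paper does not prove this lemma at all: it is stated as a known result, with the sentence ``The following lemmas were proved in \cite{ErvRoo05,Pod,SamKil}'' immediately preceding it. Your argument is essentially the standard proof from Ervin and Roop \cite{ErvRoo05} (and Lions--Magenes for the extension step): the Fourier-symbol identity $\mathcal{F}({}_{-\infty}^l D_x^\mu \tilde w) = (i\omega)^\mu\mathcal{F}(\tilde w)$ together with Plancherel gives the exact equality $|w|_{J_l^\mu} = |w|_{J_r^\mu} = |\tilde w|_{H^\mu(\mathbb{R})}$ for the zero extension, and the comparison with the intrinsic norm on $(a,b)$ rests on the boundedness of extension by zero, which is precisely where the exclusion $\mu \neq m - \tfrac12$ is needed; this is correct and consistent with the cited sources, so there is nothing to object to.
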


\begin{corollary}\label{cor:2C3}
Under the condition of Lemma \ref{lem:2C1} and $0 \leq \theta \leq 1$, we have
$$\begin{aligned}
& (C_1/2) | w |_{H^\mu(a,b)} \le |w |_{J^{\mu,\theta}(a,b)} \le C_2 | w |_{H^\mu(a,b)}, \\
& (C_1/2) \| w \|_{H^\mu(a,b)} \le \|w \|_{J^{\mu,\theta}(a,b)} \le C_2 \| w \|_{H^\mu(a,b)}.
\end{aligned}$$
\end{corollary}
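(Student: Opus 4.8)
The plan is to derive Corollary \ref{cor:2C3} directly from Lemma \ref{lem:2C2} by exploiting the fact that the two-sided semi-norm $|\cdot|_{J^{\mu,\theta}(a,b)}$ is, by definition \eqref{FDE:e2}, just the weighted Euclidean combination $\bigl(\theta^2|w|^2_{J^\mu_l(a,b)} + (1-\theta)^2|w|^2_{J^\mu_r(a,b)}\bigr)^{1/2}$, together with the equality $|w|_{J^\mu_l(a,b)} = |w|_{J^\mu_r(a,b)}$ furnished by Lemma \ref{lem:2C2}. Writing $v := |w|_{J^\mu_l(a,b)} = |w|_{J^\mu_r(a,b)}$, one has $|w|_{J^{\mu,\theta}(a,b)} = \sqrt{\theta^2 + (1-\theta)^2}\, v$, so the whole statement reduces to estimating the scalar factor $\sqrt{\theta^2+(1-\theta)^2}$ for $\theta \in [0,1]$.

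Next I would bound this factor from above and below. For the upper bound, since $0 \le \theta \le 1$ we have $\theta^2 \le \theta$ and $(1-\theta)^2 \le 1-\theta$, whence $\theta^2 + (1-\theta)^2 \le 1$, giving $\sqrt{\theta^2+(1-\theta)^2} \le 1$. For the lower bound, minimizing the quadratic $\theta^2+(1-\theta)^2$ over $\theta \in [0,1]$ (the vertex is at $\theta = 1/2$) gives the minimum value $1/2$, so $\sqrt{\theta^2+(1-\theta)^2} \ge 1/\sqrt{2} > 1/2$. Combining with Lemma \ref{lem:2C2}, for every $w \in J^\mu_{l,0}(a,b) = J^\mu_{r,0}(a,b) = H^\mu_0(a,b)$,
\[
\tfrac{1}{2}\, C_1\, |w|_{H^\mu(a,b)} \;<\; \tfrac{1}{\sqrt 2}\, C_1\, |w|_{H^\mu(a,b)} \;\le\; |w|_{J^{\mu,\theta}(a,b)} \;\le\; C_2\, |w|_{H^\mu(a,b)},
\]
where the first inequality of Lemma \ref{lem:2C2} was used for the lower estimate of $v$ and the last for the upper estimate. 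The factor $1/2$ in the statement is thus a (slightly lossy but clean) bound; one could equally keep $1/\sqrt2$.

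For the norm inequalities I would simply add $\|w\|^2_{L^2(a,b)}$ to the squared semi-norm estimates. Since $\tfrac14 C_1^2 \le 1$ we have $\|w\|^2_{J^{\mu,\theta}(a,b)} = \|w\|^2_{L^2(a,b)} + |w|^2_{J^{\mu,\theta}(a,b)} \ge \tfrac14 C_1^2\bigl(\|w\|^2_{L^2(a,b)} + |w|^2_{H^\mu(a,b)}\bigr) = \tfrac14 C_1^2 \|w\|^2_{H^\mu(a,b)}$, and likewise $\|w\|^2_{J^{\mu,\theta}(a,b)} \le \max\{1,C_2^2\}\,\|w\|^2_{H^\mu(a,b)} = C_2^2\|w\|^2_{H^\mu(a,b)}$ since we may assume $C_2 \ge 1$ (enlarging $C_2$ if necessary). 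Taking square roots yields the stated bounds. Note the hypothesis should really be that of Lemma \ref{lem:2C2} (namely $\mu > 0$, $\mu \ne m-1/2$), not only Lemma \ref{lem:2C1}, since we invoke the equality of the left and right spaces and semi-norms; under the stated condition $\mu > 1/2$ one also needs $\mu \ne m - 1/2$, which for $\mu > 1/2$ excludes $\mu = 3/2, 5/2, \dots$. I expect no real obstacle here — the only mild subtlety is bookkeeping the constants $C_1, C_2$ consistently with Lemma \ref{lem:2C2} and normalizing so that $C_2 \ge 1 \ge C_1/2$.
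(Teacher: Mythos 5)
Your proof is correct and is exactly the intended derivation: the paper states this as an immediate corollary of Lemma \ref{lem:2C2} (with no written proof), using $|w|_{J^\mu_l(a,b)}=|w|_{J^\mu_r(a,b)}$ and the elementary bounds $1/2\le\theta^2+(1-\theta)^2\le 1$. Your remark that the hypothesis should reference Lemma \ref{lem:2C2} (i.e.\ also $\mu\neq m-1/2$) rather than only Lemma \ref{lem:2C1} is a fair catch of a slip in the statement.
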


\begin{lemma}\label{lem:2C4} The left and right fractional integral operators are adjoint in the $L^2$-sense, i.e., for all $\mu > 0$
$$\big({}_a^lI^{\mu}_x ~w, v \big)_{L^2(a,b)} = \big(w,{}_x^rI^{\mu}_b~ v \big)_{L^2(a,b)}, \quad \forall ~ w, v \in L^2(a,b).$$
The left and right fractional integral operators follow the properties of a semigroup, i.e., for any $w \in L^p(a,b)$ with $p \ge 1$,
$$\begin{array}{rcll}
{}_a^lI^{\mu}_x ~ {}_a^lI^{\sigma}_x w &=& {}_a^lI^{\mu+\sigma}_x w \qquad \forall x \in [a,b], ~~\forall \mu, \sigma > 0, \\[0.05in]
{}_x^rI^{\mu}_b ~{}_x^rI^{\sigma}_b w &=& {}_x^rI^{\mu+\sigma}_b w \qquad \forall x \in [a,b], ~~\forall \mu, \sigma > 0.
\end{array}$$
\end{lemma}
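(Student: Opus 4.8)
The plan is to establish both assertions by direct computation, reducing everything to an application of Fubini's theorem and, for the semigroup identity, to a Beta-function integral. Throughout I would use the elementary fact that for every $\mu>0$ the kernel $k_\mu(t):=t^{\mu-1}/\Gamma(\mu)$ lies in $L^1(0,b-a)$, so that after extending functions by zero outside $(a,b)$ the operator ${}_a^lI^\mu_x$ acts as convolution against $k_\mu$ restricted to $(0,b-a)$. Young's convolution inequality then gives $\|{}_a^lI^\mu_x w\|_{L^p(a,b)}\le \|k_\mu\|_{L^1(0,b-a)}\|w\|_{L^p(a,b)}$ for every $p\ge 1$; in particular ${}_a^lI^\mu_x$ maps $L^p(a,b)$ boundedly into itself, and likewise ${}_x^rI^\mu_b$, which is the image of ${}_a^lI^\mu_x$ under the reflection $x\mapsto a+b-x$.

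For the adjointness identity I would expand $\big({}_a^lI^\mu_x w,v\big)_{L^2(a,b)}$ as the double integral of $w(s)v(x)(x-s)^{\mu-1}/\Gamma(\mu)$ over the triangle $\{a\le s\le x\le b\}$. To justify interchanging the order of integration I bound the integral of the modulus of the integrand by $\int_a^b |v(x)|\,\big({}_a^lI^\mu_x|w|\big)(x)\,dx\le \|v\|_{L^2(a,b)}\,\|{}_a^lI^\mu_x|w|\|_{L^2(a,b)}\le \|k_\mu\|_{L^1(0,b-a)}\|w\|_{L^2(a,b)}\|v\|_{L^2(a,b)}<\infty$, using Cauchy--Schwarz and the Young bound above. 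Swapping the integrations and recognizing the inner integral $\frac1{\Gamma(\mu)}\int_s^b v(x)(x-s)^{\mu-1}\,dx$ as $\big({}_x^rI^\mu_b v\big)(s)$ then yields $\big({}_a^lI^\mu_x w,v\big)_{L^2(a,b)}=\big(w,{}_x^rI^\mu_b v\big)_{L^2(a,b)}$.

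For the semigroup property I would first note ${}_a^lI^\sigma_x w\in L^p(a,b)$ by the Young bound, so ${}_a^lI^\mu_x\big({}_a^lI^\sigma_x w\big)$ is well defined; I then write it as the nested integral and apply Fubini on the triangle $\{a\le s\le t\le x\}$, the absolute double integral equalling $\frac1{\Gamma(\mu)}\int_a^x (x-t)^{\mu-1}\big({}_a^lI^\sigma_x|w|\big)(t)\,dt$, which is finite for a.e.\ $x$ by the same Young/H\"older estimate applied on $(a,x)$. After swapping the $t$- and $s$-integrations, the inner integral $\int_s^x (x-t)^{\mu-1}(t-s)^{\sigma-1}\,dt$ is evaluated by the substitution $t=s+(x-s)\tau$, $\tau\in[0,1]$, to equal $(x-s)^{\mu+\sigma-1}B(\mu,\sigma)$ with $B(\mu,\sigma)=\Gamma(\mu)\Gamma(\sigma)/\Gamma(\mu+\sigma)$; plugging this back and cancelling the Gamma factors produces exactly $\frac1{\Gamma(\mu+\sigma)}\int_a^x (x-s)^{\mu+\sigma-1}w(s)\,ds={}_a^lI^{\mu+\sigma}_x w(x)$. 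The right-sided identity follows from the reflection $x\mapsto a+b-x$, which interchanges ${}_a^lI^\cdot_x$ with ${}_x^rI^\cdot_b$ while preserving $L^p$ norms.

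The argument is entirely routine; the only point requiring care is the absolute convergence of the double integrals needed to invoke Fubini, which is guaranteed precisely by $\mu,\sigma>0$ (so that $t^{\mu-1},t^{\sigma-1}$ are integrable near $0$) together with Young's inequality. As an alternative I would remark that one may prove both statements first for $w,v\in C^\infty_0(a,b)$, where all the integrals are manifestly finite, and then pass to $L^2(a,b)$ (respectively $L^p(a,b)$) by density using the continuity of the fractional integral operators established above; I would nonetheless carry out the direct computation since it is no harder.
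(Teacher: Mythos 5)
The paper does not prove this lemma at all; it is quoted from the literature (Ervin--Roop, Podlubny, Samko--Kilbas--Marichev), and your argument is precisely the standard proof given in those references: Fubini's theorem (justified via Young's convolution inequality and Cauchy--Schwarz) for the $L^2$-adjointness, and Fubini plus the Beta-function identity $\int_s^x (x-t)^{\mu-1}(t-s)^{\sigma-1}\,dt=(x-s)^{\mu+\sigma-1}\Gamma(\mu)\Gamma(\sigma)/\Gamma(\mu+\sigma)$ for the semigroup property, with the right-sided statements obtained by reflection. Your proof is correct and complete; the only blemish is the harmless notational slip $\big({}_a^lI^\sigma_x|w|\big)(t)$, where the subscript should be $t$ rather than $x$.
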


\begin{lemma}\label{lem:2C5} For $\mu > 0$, the following relations hold for any $w \in H^{\mu}_0(a,b)$
$$\bigl ({}_a^lD^{\mu }_xw, {}_x^rD^{\mu}_bw \bigr)_{L^2(a,b)} = \cos(\pi \mu) \|{}_a^l{D}^{\mu}_xw\|^2_{L^2(a,b)}
= \cos(\pi \mu) \|{}_x^rD^{\mu}_bw\|^2_{L^2(a,b)}.$$
\end{lemma}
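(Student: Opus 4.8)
The plan is to argue in three steps: (i) reduce to smooth, compactly supported $w$ by density; (ii) extend $w$ by zero to $\mathbb{R}$ and identify the finite-interval operators on $(a,b)$ with the whole-line operators ${}_{-\infty}^{l}D_x^{\mu}$ and ${}_x^rD_\infty^{\mu}$; and (iii) pass to the Fourier side, where these operators act as multiplication by the symbols $(i\omega)^{\mu}$ and $(-i\omega)^{\mu}$. In accordance with the definitions in (\ref{FDE:e2}), for $w\in H^{\mu}_0(a,b)$ the norms $\|{}_a^lD^{\mu}_xw\|_{L^2(a,b)}$ and $\|{}_x^rD^{\mu}_bw\|_{L^2(a,b)}$ appearing in the statement are to be read as the seminorms $|w|_{J^{\mu}_l(a,b)}=\|{}_{-\infty}^{l}D_x^{\mu}\tilde w\|_{L^2(\mathbb{R})}$ and $|w|_{J^{\mu}_r(a,b)}=\|{}_x^rD_\infty^{\mu}\tilde w\|_{L^2(\mathbb{R})}$, where $\tilde w$ denotes the extension of $w$ by zero; as the computation will show, these two seminorms are automatically equal, so only the first equality in the statement requires real work.

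For (i): since $|(\pm i\omega)^{\mu}|=|\omega|^{\mu}$, the maps $w\mapsto{}_{-\infty}^{l}D_x^{\mu}\tilde w$ and $w\mapsto{}_x^rD_\infty^{\mu}\tilde w$ are bounded from $H^{\mu}_0(a,b)$ into $L^2(\mathbb{R})$, so both sides of the asserted identity depend continuously on $w\in H^{\mu}_0(a,b)$; as $C^\infty_0(a,b)$ is dense in $H^{\mu}_0(a,b)$ by definition, it suffices to treat $w\in C^\infty_0(a,b)$, which we take real-valued. For (ii): because $w$ and all its derivatives vanish at $a$ and $b$, $D^m\tilde w$ is the zero extension of $D^mw$; hence ${}_a^lD^{\mu}_xw={}_{-\infty}^{l}D_x^{\mu}\tilde w$ and ${}_x^rD^{\mu}_bw={}_x^rD_\infty^{\mu}\tilde w$ on $(a,b)$. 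Moreover, $\tilde w$ being supported in $[a,b]$, the function ${}_{-\infty}^{l}D_x^{\mu}\tilde w$ vanishes on $(-\infty,a)$ and ${}_x^rD_\infty^{\mu}\tilde w$ vanishes on $(b,\infty)$, so their pointwise product is supported in $[a,b]$ and
$$
\bigl({}_a^lD^{\mu}_xw,\ {}_x^rD^{\mu}_bw\bigr)_{L^2(a,b)}=\bigl({}_{-\infty}^{l}D_x^{\mu}\tilde w,\ {}_x^rD_\infty^{\mu}\tilde w\bigr)_{L^2(\mathbb{R})}.
$$

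For (iii): writing ${}_{-\infty}^{l}D_x^{\mu}={}_{-\infty}^{l}I_x^{\sigma}D^m$ and using the convolution representations of the one-sided fractional integrals (classical; see \cite{SamKil}), one gets $\mathcal F\bigl({}_{-\infty}^{l}D_x^{\mu}\tilde w\bigr)(\omega)=(i\omega)^{\mu}\,\mathcal F(\tilde w)(\omega)$ and $\mathcal F\bigl({}_x^rD_\infty^{\mu}\tilde w\bigr)(\omega)=(-i\omega)^{\mu}\,\mathcal F(\tilde w)(\omega)$, with the principal branch $(\pm i\omega)^{\mu}=|\omega|^{\mu}\exp\bigl(\pm\tfrac{i\pi\mu}{2}\,\mathrm{sgn}\,\omega\bigr)$. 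By Plancherel's identity, $\bigl({}_{-\infty}^{l}D_x^{\mu}\tilde w,\ {}_x^rD_\infty^{\mu}\tilde w\bigr)_{L^2(\mathbb{R})}=\int_{\mathbb{R}}(i\omega)^{\mu}\,\overline{(-i\omega)^{\mu}}\,|\mathcal F(\tilde w)(\omega)|^2\,d\omega$; here $(i\omega)^{\mu}\,\overline{(-i\omega)^{\mu}}=|\omega|^{2\mu}\exp(i\pi\mu\,\mathrm{sgn}\,\omega)$, and since $w$ is real, $|\mathcal F(\tilde w)|^2$ is even in $\omega$, so the oscillatory factor averages to $\cos(\pi\mu)$ and the integral becomes $\cos(\pi\mu)\int_{\mathbb{R}}|\omega|^{2\mu}|\mathcal F(\tilde w)|^2\,d\omega$. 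The same Plancherel identity gives $\|{}_{-\infty}^{l}D_x^{\mu}\tilde w\|^2_{L^2(\mathbb{R})}=\|{}_x^rD_\infty^{\mu}\tilde w\|^2_{L^2(\mathbb{R})}=\int_{\mathbb{R}}|\omega|^{2\mu}|\mathcal F(\tilde w)|^2\,d\omega$ (whence the two seminorms coincide), and combining proves the identity for $w\in C^\infty_0(a,b)$; step (i) then completes the proof. No separate treatment of $\mu=m-\tfrac12$ is needed: there $\cos(\pi\mu)=0$ and the argument already yields $0=0$.

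The main obstacle is step (ii), the transfer to $\mathbb{R}$: one must verify that the zero extension does not perturb the operators on $(a,b)$---this is exactly where the hypothesis $w\in H^{\mu}_0(a,b)$, i.e.\ the vanishing of the relevant traces, is used---and keep careful track of supports, since ${}_{-\infty}^{l}D_x^{\mu}\tilde w$ does \emph{not} in general vanish on $(b,\infty)$; this is precisely why the norms in the statement must be understood as the $J^{\mu}_l(a,b)$/$J^{\mu}_r(a,b)$-seminorms of $w$ rather than as the $L^2(a,b)$-norms of the finite-interval derivatives themselves. All remaining ingredients---the Fourier symbols of the one-sided operators, the choice of branch of $(\pm i\omega)^{\mu}$, the integrability of $|\omega|^{2\mu}|\mathcal F(\tilde w)|^2$ (immediate for $C^\infty_0$ data and equal to $|w|^2_{H^{\mu}(\mathbb{R})}$ in general), and the evenness reduction---are routine.
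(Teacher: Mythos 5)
Your argument is correct, and it is essentially the standard one: the paper does not prove Lemma \ref{lem:2C5} itself but cites it from \cite{ErvRoo05,Pod,SamKil}, and the proof in \cite{ErvRoo05} is exactly your route --- zero extension, the Fourier symbols $(\pm i\omega)^{\mu}$ of the one-sided Riemann--Liouville derivatives on $\mathbb{R}$, Plancherel, and the evenness of $|\mathcal{F}(\tilde w)|^{2}$ for real $w$, preceded by the same density reduction from $C^\infty_0(a,b)$. Your cautionary remark about how the right-hand side must be read is not pedantry but is actually necessary for the statement to be true: since ${}_{-\infty}^{l}D_x^{\mu}\tilde w$ does not vanish on $(b,\infty)$, the literal quantity $\|{}_a^lD^{\mu}_xw\|^2_{L^2(a,b)}$ is in general strictly smaller than $\|{}_{-\infty}^{l}D_x^{\mu}\tilde w\|^2_{L^2(\mathbb{R})}$, and it is only the latter that the Fourier computation identifies with $\bigl({}_a^lD^{\mu}_xw,{}_x^rD^{\mu}_bw\bigr)_{L^2(a,b)}$ up to the factor $\cos(\pi\mu)$; so for $\cos(\pi\mu)\neq 0$ the identity fails under the literal $L^2(a,b)$ reading. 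The interpretation you adopt --- the $J^{\mu}_l(a,b)$ and $J^{\mu}_r(a,b)$ seminorms of the zero extension --- is also the one forced by consistency with Lemma \ref{lem:2C2}, whose asserted equality $|w|_{J_l^{\mu}(a,b)}=|w|_{J_r^{\mu}(a,b)}$ holds for the whole-line seminorms but not for the $L^2(a,b)$ norms of the finite-interval derivatives.
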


\begin{lemma}\label{lem:2C7} Let $0 < \sigma <1$. Then for any $w \in W^{1,1}(a,b)$ with $w(a)= 0$
$$D~{}^{l}_aI_x^\sigma w  = {}^l_a I_x^\sigma Dw, \quad x \in (a,b),$$
and for any function $w \in W^{1,1}(a,b)$ with $w(b)= 0$
$$D~{}^{r}_{x}I^\sigma_bw = {}^r_xI^\sigma_b Dw, \quad x \in (a,b).$$
\end{lemma}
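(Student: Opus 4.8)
The plan is to reduce both identities to the semigroup property of the fractional integral operators (Lemma~\ref{lem:2C4}) together with an elementary differentiation-under-the-integral argument. I will describe the left-sided identity in detail; the right-sided one is entirely symmetric.

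\textbf{Step 1 (rewrite $w$ as a fractional integral of $Dw$).} Since $w \in W^{1,1}(a,b)$ with $w(a)=0$, the fundamental theorem of calculus gives $w(x)=\int_a^x Dw(s)\,ds = {}^{l}_aI_x^{1}Dw(x)$ for every $x\in[a,b]$. Because $Dw\in L^1(a,b)$, the semigroup property of Lemma~\ref{lem:2C4} applies and yields
$$ {}^{l}_aI_x^{\sigma}w = {}^{l}_aI_x^{\sigma}\,{}^{l}_aI_x^{1}Dw = {}^{l}_aI_x^{\sigma+1}Dw = \frac{1}{\Gamma(\sigma+1)}\int_a^x (x-s)^{\sigma}Dw(s)\,ds . $$

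\textbf{Step 2 (differentiate).} It remains to show this last function is absolutely continuous on $[a,b]$ with a.e.\ derivative equal to ${}^{l}_aI_x^{\sigma}Dw$. First, since the Riemann--Liouville kernel $s^{\sigma-1}/\Gamma(\sigma)$ lies in $L^1(0,b-a)$ for $\sigma>0$, Young's inequality for convolutions shows that ${}^{l}_aI_x^{\sigma}$ maps $L^1(a,b)$ into $L^1(a,b)$, so ${}^{l}_aI_x^{\sigma}Dw\in L^1(a,b)$. Next, by Tonelli/Fubini (the relevant double integral is finite, being bounded by $(b-a)^{\sigma}\|Dw\|_{L^1}/\sigma$) we may interchange the order of integration:
$$ \int_a^x {}^{l}_aI_t^{\sigma}Dw(t)\,dt = \frac{1}{\Gamma(\sigma)}\int_a^x Dw(s)\Bigl(\int_s^x (t-s)^{\sigma-1}\,dt\Bigr)ds = \frac{1}{\Gamma(\sigma+1)}\int_a^x (x-s)^{\sigma}Dw(s)\,ds . $$
Comparing with Step~1 gives ${}^{l}_aI_x^{\sigma}w(x)=\int_a^x {}^{l}_aI_t^{\sigma}Dw(t)\,dt$, and since the integrand is in $L^1(a,b)$ the Lebesgue differentiation theorem yields $D\,{}^{l}_aI_x^{\sigma}w = {}^{l}_aI_x^{\sigma}Dw$ a.e.\ on $(a,b)$, which is the first claim. (Equivalently, one may differentiate ${}^{l}_aI_x^{\sigma+1}Dw$ directly under the integral sign: the boundary contribution from the endpoint $s=x$ vanishes because $(x-s)^{\sigma}|_{s=x}=0$ as $\sigma>0$, leaving precisely ${}^{l}_aI_x^{\sigma}Dw$.)

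\textbf{Step 3 (right-sided identity).} For $w\in W^{1,1}(a,b)$ with $w(b)=0$ the fundamental theorem of calculus gives $w(x)=-\int_x^b Dw(s)\,ds = -{}^{r}_xI^{1}_b Dw(x)$, so Lemma~\ref{lem:2C4} gives ${}^{r}_xI^{\sigma}_b w = -{}^{r}_xI^{\sigma+1}_b Dw$. Repeating the Tonelli/Lebesgue-differentiation argument (or differentiating under the integral sign, again with vanishing endpoint term since $\sigma>0$) shows $D\,{}^{r}_xI^{\sigma+1}_b Dw = -{}^{r}_xI^{\sigma}_b Dw$, hence $D\,{}^{r}_xI^{\sigma}_b w = {}^{r}_xI^{\sigma}_b Dw$, as claimed. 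The only delicate point is Step~2: since $Dw$ is merely integrable, the classical Leibniz rule is not directly available, so the interchange of integration together with the Lebesgue differentiation theorem is the clean way to justify the differentiation; the vanishing of the endpoint term --- which is exactly where the hypothesis $\sigma>0$ (and, through Step~1, the boundary condition $w(a)=0$, respectively $w(b)=0$) enters --- is what makes the formal computation legitimate.
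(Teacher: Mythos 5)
Your proof is correct and follows essentially the same route as the paper: both reduce ${}^{l}_aI_x^{\sigma}w$ to $\frac{1}{\Gamma(\sigma+1)}\int_a^x (x-s)^{\sigma}Dw(s)\,ds$ --- the paper by an integration by parts in $s$ whose boundary term vanishes because $w(a)=0$, you by writing $w={}^{l}_aI_x^{1}Dw$ and invoking the semigroup property --- and then differentiate in $x$. Your Fubini/Lebesgue-differentiation justification of that final differentiation is just a more careful version of the paper's differentiation under the integral sign.
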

\begin{proof} By symmetry, we only prove the first equation.
$$\begin{aligned}
D ~{}^l_a I_x^{\sigma}w & = \f1{\Gamma(\sigma)} \f{d}{dx} \int_a^x \f{w(s)}{(x-s)^{1-\sigma}} ds\\
& \ds = \f1{\Gamma(1+\sigma)} \f{d}{dx} \big [ -(x-s)^{\sigma} w(s) \Big |_{s=a}^{s=x}
+ \int_a^x (x-s)^{\sigma} w'(s) ds \big ] \\
& \ds =  \f1{\Gamma(\sigma)} \int_a^x \f{w'(s)}{(x-s)^{1-\sigma}} ds = {}^{l}_{a}I_x^{\sigma} Dw.
\end{aligned}$$
This concludes the proof of the lemma. \end{proof}

\section{Previous results for constant-coefficient FDEs}

In their pioneer work \cite{ErvRoo05} Ervin and Roop studied the wellposedness of problem (\ref{Model:e1}) with a constant diffusivity coefficient $K$. They introduced a Galerkin weak formulation: For $f \in H^{-(1-\beta/2)}(a,b)$, find $u \in H^{1-\beta/2}_0(a,b)$ such that for any $v \in H^{1-\beta/2}_0(a,b)$
\begin{equation}\label{Prv:e1}
B(u,v) := \theta K \bigl \langle {}^l_aI_x^{\beta} D u, Dv \bigr \rangle + (1-\theta) K \bigl \langle {}^{r}_xI_b^{\beta}D u, Dv \bigr \rangle = \bigl \langle f, v \bigr \rangle.
\end{equation}
They proved the following theorems for the wellposedness of the Galerkin weak formulation (\ref{Prv:e1}) and its corresponding Galerkin finite element approximations \cite{ErvRoo05}.
\begin{theorem}\label{thm:Constant}
The bilinear form $B(\cdot,\cdot)$ is coercive and bounded on $H^{1-\beta/2}_0(a,b) \times H^{1-\beta/2}_0(a,b)$, so problem (\ref{Prv:e1})  is well posed. 
\end{theorem}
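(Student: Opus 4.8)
The plan is to convert the mixed bilinear form $B(\cdot,\cdot)$ --- which pairs a fractional integral of a first derivative against a first derivative --- into a pairing of two fractional derivatives of order $\mu:=1-\beta/2$, and then read off coercivity and boundedness from Lemmas~\ref{lem:2C1}, \ref{lem:2C2}, and \ref{lem:2C5}. Note that $0<\beta<1$ gives $\mu\in(1/2,1)$, so $\mu>1/2$ and $\mu\neq m-1/2$ for every $m\in\mathbb{N}$; hence all three lemmas apply on the trial/test space $H^\mu_0(a,b)=H^{1-\beta/2}_0(a,b)$.

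First I would prove, for $w,z\in C^\infty_0(a,b)$, the two identities
$$\bigl\langle {}_a^lI_x^{\beta}Dw,\,Dz\bigr\rangle=-\bigl\langle {}_a^lD_x^{\mu}w,\,{}_x^rD_b^{\mu}z\bigr\rangle,\qquad \bigl\langle {}_x^rI_b^{\beta}Dw,\,Dz\bigr\rangle=-\bigl\langle {}_x^rD_b^{\mu}w,\,{}_a^lD_x^{\mu}z\bigr\rangle.$$
For the first, write ${}_a^lI_x^{\beta}={}_a^lI_x^{\beta/2}\,{}_a^lI_x^{\beta/2}$ by the semigroup property, shift one factor onto the second argument via the $L^2$-adjointness of the left and right fractional integrals (both contained in Lemma~\ref{lem:2C4}), and then use the very definition of the Caputo derivatives with $m=1$, $\sigma=\beta/2$ to identify ${}_a^lI_x^{\beta/2}Dw={}_a^lD_x^{\mu}w$ and ${}_x^rI_b^{\beta/2}Dz=-{}_x^rD_b^{\mu}z$; the second identity is the mirror image. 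Because Lemma~\ref{lem:2C2} makes ${}_a^lD_x^{\mu}$ and ${}_x^rD_b^{\mu}$ bounded from $H^\mu_0(a,b)$ into $L^2(a,b)$, both sides of each identity are continuous on $H^\mu_0(a,b)$, so by density the identities --- and therefore the representation
$$B(w,z)=-\theta K\bigl\langle {}_a^lD_x^{\mu}w,\,{}_x^rD_b^{\mu}z\bigr\rangle-(1-\theta)K\bigl\langle {}_x^rD_b^{\mu}w,\,{}_a^lD_x^{\mu}z\bigr\rangle$$
--- extend to all $w,z\in H^{1-\beta/2}_0(a,b)$.

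Coercivity then follows by taking $z=w$: the two terms coalesce, and Lemma~\ref{lem:2C5} gives
$$B(w,w)=-K\bigl({}_a^lD_x^{\mu}w,\,{}_x^rD_b^{\mu}w\bigr)_{L^2(a,b)}=-K\cos(\pi\mu)\,\bigl\|{}_a^lD_x^{\mu}w\bigr\|_{L^2(a,b)}^2=K\cos(\pi\beta/2)\,|w|_{J_l^{\mu}(a,b)}^2,$$
which is strictly positive since $\mu\in(1/2,1)$ forces $\cos(\pi\mu)<0$. Lemma~\ref{lem:2C2} bounds $|w|_{J_l^{\mu}(a,b)}\ge C_1|w|_{H^\mu(a,b)}$, and the fractional Poincar\'{e} inequality (Lemma~\ref{lem:2C1}) upgrades the seminorm to the full norm, so $B(w,w)\ge \bigl(KC_1^2\cos(\pi\beta/2)/(1+C_0^2)\bigr)\|w\|_{H^\mu(a,b)}^2$. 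Boundedness is routine: Cauchy--Schwarz on each term of the representation together with $\|{}_a^lD_x^{\mu}w\|_{L^2(a,b)}=|w|_{J_l^{\mu}(a,b)}=|w|_{J_r^{\mu}(a,b)}=\|{}_x^rD_b^{\mu}w\|_{L^2(a,b)}\le C_2\|w\|_{H^\mu(a,b)}$ from Lemma~\ref{lem:2C2} gives $|B(w,z)|\le 2KC_2^2\|w\|_{H^\mu(a,b)}\|z\|_{H^\mu(a,b)}$. With coercivity and boundedness on $H^{1-\beta/2}_0(a,b)$ in hand, the Lax--Milgram theorem delivers existence, uniqueness and stability of $u$, i.e.\ wellposedness of (\ref{Prv:e1}).

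The main work is the first step, the symmetric representation of $B$; within it the delicate points are the bookkeeping of signs and the observation that commuting $D$ past a fractional integral is unproblematic on $C^\infty_0(a,b)$ before the density passage. The sign $\cos(\pi\mu)<0$ produced by $\mu>1/2$ is precisely the structural fact that makes the constant-coefficient form coercive --- and whose failure in the presence of a genuine variable coefficient $K(x)$ is what motivates the remainder of the paper.
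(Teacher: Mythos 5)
Your proof is correct, but note that the paper itself offers no proof of this theorem --- it is quoted from Ervin and Roop \cite{ErvRoo05}, and your argument is a faithful reconstruction of that standard proof. The key steps match: the semigroup and adjointness properties (Lemma \ref{lem:2C4}) give the symmetric representation of $B$ in terms of fractional derivatives of order $\mu=1-\beta/2$, Lemma \ref{lem:2C5} with $\cos(\pi\mu)=-\cos(\pi\beta/2)<0$ yields coercivity, Lemmas \ref{lem:2C1} and \ref{lem:2C2} upgrade the seminorm to the full $H^{\mu}$ norm, and Lax--Milgram concludes.
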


\begin{theorem}\label{thm:GAL1} Let $S_h^m(a,b) \subset H^{1-\beta/2}_0(a,b)$ consist of piecewise polynomials of degree up to $(m-1)$ with respect to a quasiuniform partition of diameter $h$ and $u_h \in S_h^m(a,b)$ satisfy  
\begin{equation}\label{Prv:e2}
B(u_h,v_h) = \bigl \langle f, v_h \bigr \rangle \qquad \forall v_h \in S_h^m(a,b).
\end{equation}
Assume that the weak solution $u$ to problem (\ref{Prv:e1}) is in $H^m(a,b) \cap  H^{1-\beta/2}_0(a,b)$. Then an optimal-order error estimate in the energy norm holds
\begin{equation}\label{Prv:e3}
\| u_h - u \|_{H^{1-\beta/2}} \le C h^{m-1+\beta/2} \|u\|_{H^m}.
\end{equation}
Furthermore, if the true solution $w_g$ to the dual problem of (\ref{Prv:e1}) is in $H^{2-\beta}(a,b) \cap H^1_0(a,b)$ for \underline{each} $g \in L^2(a,b)$ such that 
\begin{equation}\label{Prv:e3a}
\| w_g \|_{H^{2-\beta}} \le C \| g\|_{L^2},
\end{equation}
then an optimal-order error estimate in the $L^2$ norm holds
\begin{equation}\label{Prv:e3b}
\| u_h - u\|_{L^2} \le C h^m \|u\|_{H^m}.
\end{equation}
\end{theorem}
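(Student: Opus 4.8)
The plan is to run the two classical pillars of finite-element error analysis. For (\ref{Prv:e3}) I would combine Galerkin orthogonality with C\'ea's lemma and a polynomial approximation estimate measured in the fractional norm; for (\ref{Prv:e3b}) I would run an Aubin--Nitsche duality argument built on the assumed regularity (\ref{Prv:e3a}) of the dual solution.

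First, for the energy estimate: subtracting (\ref{Prv:e2}) from (\ref{Prv:e1}) and using $S_h^m(a,b)\subset H^{1-\beta/2}_0(a,b)$ gives $B(u-u_h,v_h)=0$ for all $v_h\in S_h^m(a,b)$. Theorem \ref{thm:Constant} furnishes constants $\alpha>0$, $M<\infty$ with $\alpha\|w\|_{H^{1-\beta/2}}^2\le B(w,w)$ and $|B(w,v)|\le M\|w\|_{H^{1-\beta/2}}\|v\|_{H^{1-\beta/2}}$, so for any $v_h\in S_h^m(a,b)$,
$$\alpha\|u-u_h\|_{H^{1-\beta/2}}^2\le B(u-u_h,u-u_h)=B(u-u_h,u-v_h)\le M\|u-u_h\|_{H^{1-\beta/2}}\|u-v_h\|_{H^{1-\beta/2}},$$
whence $\|u-u_h\|_{H^{1-\beta/2}}\le (M/\alpha)\inf_{v_h\in S_h^m(a,b)}\|u-v_h\|_{H^{1-\beta/2}}$. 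I would then take $v_h=I_hu$, the continuous piecewise-polynomial interpolant of $u$, which vanishes at $a$ and $b$ and hence lies in $S_h^m(a,b)$. On the quasiuniform partition the standard estimates $\|u-I_hu\|_{L^2}\le Ch^m\|u\|_{H^m}$ and $|u-I_hu|_{H^1}\le Ch^{m-1}\|u\|_{H^m}$ hold for $u\in H^m(a,b)$; since $0<\beta<1$ gives $1-\beta/2\in(1/2,1)$, the space $H^{1-\beta/2}$ is an interpolation space between $L^2$ and $H^1$, and operator interpolation of the two estimates gives $\|u-I_hu\|_{H^{1-\beta/2}}\le Ch^{m-(1-\beta/2)}\|u\|_{H^m}=Ch^{m-1+\beta/2}\|u\|_{H^m}$. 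Here Lemma \ref{lem:2C2} is what lets us move freely between the $J$-norms appearing inside $B$ and the $H^{1-\beta/2}$ norm, since $1-\beta/2$ is not a half-integer. This is (\ref{Prv:e3}).

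For the $L^2$ estimate, set $e:=u-u_h\in L^2(a,b)$ and let $w_e$ solve the dual problem of (\ref{Prv:e1}) with datum $g=e$, i.e. $B(v,w_e)=(e,v)_{L^2(a,b)}$ for all $v\in H^{1-\beta/2}_0(a,b)$; this is well posed because $B$ is coercive and bounded, applying the Lax--Milgram theorem to the transposed form. Testing with $v=e$ and then using Galerkin orthogonality against an arbitrary $v_h\in S_h^m(a,b)$,
$$\|e\|_{L^2}^2=(e,e)_{L^2(a,b)}=B(e,w_e)=B(e,w_e-v_h)\le M\|e\|_{H^{1-\beta/2}}\|w_e-v_h\|_{H^{1-\beta/2}}.$$
Choosing $v_h=I_hw_e$ and invoking the hypothesis (\ref{Prv:e3a}) that $w_e\in H^{2-\beta}(a,b)\cap H^1_0(a,b)$ with $\|w_e\|_{H^{2-\beta}}\le C\|e\|_{L^2}$ (and noting $2-\beta\in(1,2)$ is not a half-integer), the fractional interpolation estimate gives $\|w_e-I_hw_e\|_{H^{1-\beta/2}}\le Ch^{(2-\beta)-(1-\beta/2)}\|w_e\|_{H^{2-\beta}}=Ch^{1-\beta/2}\|e\|_{L^2}$. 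Combining with the energy bound (\ref{Prv:e3}) just obtained,
$$\|e\|_{L^2}^2\le M\,\bigl(Ch^{m-1+\beta/2}\|u\|_{H^m}\bigr)\bigl(Ch^{1-\beta/2}\|e\|_{L^2}\bigr)=Ch^{m}\|u\|_{H^m}\|e\|_{L^2},$$
and dividing by $\|e\|_{L^2}$ gives (\ref{Prv:e3b}).

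The main obstacle is purely technical: establishing the fractional-order approximation bound $\|w-I_hw\|_{H^s(a,b)}\le Ch^{t-s}\|w\|_{H^t(a,b)}$ for $0\le s<t$ with $s,t$ away from half-integers, and ensuring the interpolant keeps the homogeneous boundary values so that it stays in $S_h^m(a,b)\subset H^{1-\beta/2}_0(a,b)$ and, in the duality step, remains an admissible competitor for $w_e$. This is handled by the $K$-method of interpolation applied to the integer-order interpolation estimates; the equivalence of the $J$- and $H$-norms used throughout is Lemma \ref{lem:2C2}, and the Poincar\'e inequality of Lemma \ref{lem:2C1} underlies the coercivity of Theorem \ref{thm:Constant} on which the whole argument rests. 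Everything else is routine bookkeeping.
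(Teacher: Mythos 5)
Your argument is correct and is essentially the standard one: the paper itself states Theorem \ref{thm:GAL1} without proof, quoting it from \cite{ErvRoo05}, and the argument there is exactly the C\'ea-lemma/Aubin--Nitsche combination you describe, with the fractional-order interpolation bound obtained by operator interpolation between the $L^2$ and $H^1$ nodal-interpolant estimates and with Lemma \ref{lem:2C2} mediating between the $J$- and $H$-norms. Do note, however, the remark immediately following the theorem in the paper: the dual-regularity hypothesis (\ref{Prv:e3a}) is generally \emph{false} for these FDEs, so while your conditional proof of (\ref{Prv:e3b}) is sound, the hypothesis it rests on is typically not satisfied --- which is precisely the paper's point in criticizing Nitsche-lifting $L^2$ estimates in this setting.
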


It was shown in \cite{JinLaz,WanYanZhu,WanYanZhu1,WanZha} that the true solution to the homogeneous Dirichlet boundary-value problem of one-dimensional steady-state FDEs (\ref{Model:e1}) of constant coefficients and right-hand side is not even in $W^{1,1/\beta}(0,1)$ for any $0 < \beta < 1$. In particular, $u \notin H^1(0,1)$ for any $1/2 \le \beta < 1$! This is in sharp contrast to the case of integer-order elliptic PDEs. To date there are no verifiable conditions on the coefficients and source terms of FDEs in the literature that can ensure the existence of smooth true solutions to FDEs. Consequently, there are no verifiable conditions to guarantee the high-order convergence rates of the numerical discretizations to FDEs. Moreover, the lack of full regularity (\ref{Prv:e3a}) of the solution to the dual FDE  also implies that any Nitsche-lifting based proof of the optimal-order $L^2$ error estimates of the form (\ref{Prv:e3b}) in the literature \cite{ErvRoo05} is invalid!

Another natural and fundamental question is as follows: Whether a variable-coefficient analogue of the bilinear form $B(\cdot,\cdot)$ in (\ref{Prv:e1}) is coercive, which in turn ensures the wellposedness of problem (\ref{Model:e1})? However, the following lemma gives rise to a negative answer to the question. 

\begin{lemma}\label{lem:Example} For any $ 0 < \beta <1$ and $ 0 \leq \theta \leq 1$, there exists a variable diffusivity coefficient $K = K(x,\beta,\theta)$ with positive lower and upper bounds and a function $w \in H^{1-\beta/2}_0(0,1)$ such that $B(w,w) < 0$. In fact, $K$ can be chosen as piecewise constant with just three pieces or its smooth modification.
\end{lemma}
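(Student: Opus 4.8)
Write the variable-coefficient bilinear form on $(0,1)$ as
$$B(w,w)=\theta\int_0^1 K(x)\bigl({}^{l}_0I_x^{\beta}Dw\bigr)(x)\,(Dw)(x)\,dx+(1-\theta)\int_0^1 K(x)\bigl({}^{r}_xI_1^{\beta}Dw\bigr)(x)\,(Dw)(x)\,dx=\int_0^1 K(x)\,h_\theta(x)\,dx,$$
where $h_\theta:=\bigl[\theta\,{}^{l}_0I_x^{\beta}Dw+(1-\theta)\,{}^{r}_xI_1^{\beta}Dw\bigr]\cdot Dw$. By Theorem~\ref{thm:Constant} the constant-coefficient form $\int_0^1 h_\theta\,dx$ (take $K\equiv1$) is positive, but the \emph{integrand} $h_\theta$ is in general not pointwise nonnegative. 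The plan is to exhibit a single simple $w$ for which $h_\theta$ is strictly negative on some interval $I$, and then choose $K$ to be large on $I$ and equal to $1$ elsewhere, so that $B(w,w)=\int_0^1 h_\theta\,dx+(\rho-1)\int_I h_\theta\,dx$ becomes negative as the amplitude $\rho$ grows.

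First I would reduce to $\theta\in[1/2,1]$. Writing $Rx:=1-x$, the substitution $s\mapsto1-s$ in the integrals of \eqref{Model:e2} gives ${}^{l}_0I_x^{\beta}(w\circ R)=({}^{r}_xI_1^{\beta}w)\circ R$, ${}^{r}_xI_1^{\beta}(w\circ R)=({}^{l}_0I_x^{\beta}w)\circ R$, and $D(w\circ R)=-(Dw)\circ R$, whence $B_{K,\theta}(w,w)=B_{K\circ R,\,1-\theta}(w\circ R,w\circ R)$. Since $K\circ R$ has the same bounds and the same three-interval structure as $K$, a counterexample for $1-\theta$ transfers to one for $\theta$, so we may assume $\theta\ge1/2$.

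Next, fix $p\in(0,1/2)$ and let $w_p$ be the continuous piecewise-linear ``tent'' with $Dw_p=\chi_{(0,p)}-\frac{p}{1-p}\chi_{(p,1)}$; then $w_p(0)=w_p(1)=0$, and since $w_p$ is Lipschitz with vanishing traces and $1-\beta/2<1$, we have $w_p\in H^1_0(0,1)\subset H^{1-\beta/2}_0(0,1)$, so all the fractional integrals below are ordinary integrals and $B(w_p,w_p)$ is a genuine Lebesgue integral. Evaluating the power-function integrals in \eqref{Model:e2} gives, for $x\in(p,1)$,
$${}^{l}_0I_x^{\beta}Dw_p(x)=\frac{1}{\Gamma(\beta+1)}\Bigl(x^{\beta}-\frac{1}{1-p}(x-p)^{\beta}\Bigr),\qquad {}^{r}_xI_1^{\beta}Dw_p(x)=-\frac{p}{(1-p)\Gamma(\beta+1)}(1-x)^{\beta},$$
together with the (bounded) companion formulas on $(0,p)$, so that $h_\theta\in L^{\infty}(0,1)$ and $M:=\int_0^1 h_\theta\,dx$ is finite. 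Since $Dw_p(p^+)=-p/(1-p)<0$, letting $x\downarrow p$ yields
$$h_\theta(p^+)=\frac{-p}{(1-p)\Gamma(\beta+1)}\Bigl(\theta\,p^{\beta}-(1-\theta)\,p\,(1-p)^{\beta-1}\Bigr),$$
which is strictly negative precisely when $\tfrac{\theta}{1-\theta}>\bigl(\tfrac{p}{1-p}\bigr)^{1-\beta}$; for $\theta\in[1/2,1]$ this holds for \emph{every} $p\in(0,1/2)$, because then the right-hand side is $<1\le\theta/(1-\theta)$. As $h_\theta$ is continuous on $[p,1]$, it stays strictly negative on a nonempty interval $I:=(p,p+\delta)\subset(p,1)$.

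Finally I would set $K=K_\rho:=1+(\rho-1)\chi_I$, which satisfies $1\le K_\rho\le\rho$ and is piecewise constant on the three intervals $(0,p)$, $(p,p+\delta)$, $(p+\delta,1)$. Then $B(w_p,w_p)=M+(\rho-1)\int_I h_\theta\,dx$, and since $\int_I h_\theta\,dx<0$ while $M$ is finite, $B(w_p,w_p)<0$ for all sufficiently large $\rho$. For the smooth modification one replaces $K_\rho$ by a smooth $\tilde K$ with $1\le\tilde K\le\rho$ that equals $K_\rho$ outside two $\varepsilon$-neighbourhoods of the jumps; then $|B_{\tilde K}(w_p,w_p)-B_{K_\rho}(w_p,w_p)|\le4\varepsilon(\rho-1)\|h_\theta\|_{L^{\infty}(0,1)}\to0$ as $\varepsilon\downarrow0$, so a small enough $\varepsilon$ preserves the sign. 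The only genuine computation — and the step most prone to sign errors — is the evaluation of ${}^{l}_0I_x^{\beta}Dw_p$ and ${}^{r}_xI_1^{\beta}Dw_p$ and the observation that near $x=p^+$ the left-fractional contribution dominates when $\theta\ge1/2$ and $p<1/2$; I expect that elementary verification, together with the reflection reduction that lets the single family $\{w_p\}_{0<p<1/2}$ cover all $\theta\in[0,1]$, to be the heart of the proof.
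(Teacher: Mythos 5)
Your construction is correct and follows essentially the same strategy as the paper's proof: exhibit an explicit piecewise-linear $w$ for which the integrand $\bigl(\theta\,{}^{l}_0I_x^{\beta}Dw+(1-\theta)\,{}^{r}_xI_1^{\beta}Dw\bigr)Dw$ is strictly negative on a subinterval, and then take a three-piece piecewise-constant $K$ that weights that subinterval heavily relative to the rest (the paper shrinks $K$ off the bad interval while you enlarge it on the bad interval, which is equivalent up to rescaling). Your tent function $w_p$ combined with the reflection $x\mapsto 1-x$ is a slightly cleaner implementation than the paper's three-segment zigzag, since the sign condition at $x=p^{+}$ reduces to $\theta/(1-\theta)>(p/(1-p))^{1-\beta}$ and avoids the paper's inequality $1+3^{\beta}-2^{1+\beta}<0$, but the underlying idea is identical.
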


\begin{proof} We prove the lemma by construction. Choose $w$ to be the following continuous and
piecewise-linear function
$$w(x) :=\left\{ \begin{array}{ll}
4x, & \ds x \in \Big[0,\f1{4} \Big],\\[0.1in]
\ds 4\Big(\f1{2} - x \Big), & \ds x \in \Big [\f1{4},\f3{4} \Big],\\[0.1in]
-4(1-x), \quad & \ds x \in \Big [ \f3{4},1 \Big].
\end{array}\right.$$
Apparently, $w \in H^1_0(0,1)$. Direct calculation yields
$${}_0^l I_x^{\beta} D w(x) =\f4{\Gamma(\beta+1)} \left\{ \begin{array}{ll}
\ds  x^\beta, & \ds x \in \Bigl [0,\f1{4}\Bigr],\\[6pt]
\ds  x^\beta-2\Bigl(x-\f1{4}\Bigr)^{\beta}, & \ds x \in \Bigl[\f1{4},\f3{4}\Bigr],\\[6pt]
\ds  x^\beta-2\Bigl(x-\f1{4}\Bigr)^{\beta}+2\Bigl(x-\f3{4}\Bigr)^{\beta}, ~~& \ds x \in \Bigl[\f3{4},1\Bigr]
\end{array}\right.$$
and
$$\begin{array}{l}
\ds {}_x^rI_1^{\beta} Dw(x) = \f4{\Gamma(\beta+1)}\left\{ \begin{array}{ll}
\ds  (1-x)^\beta-2\Bigl(\f3{4}-x\Bigr)^\beta+2\Bigl(\f1{4}-x\Bigr)^\beta, & \ds x \in \Bigl[0,\f1{4}\Bigr],\\[6pt]
\ds  (1-x)^\beta-2\Bigl(\f3{4}-x\Bigr)^{\beta}, & \ds x \in \Bigl[\f1{4},\f3{4}\Bigr],\\[6pt]
\ds  (1-x)^\beta, ~~ &\ds x \in \Bigl[\f3{4},1\Bigr].
\end{array}\right.
\end{array}$$
We now prove that there exists a variable diffusivity coefficient such that $B(w,w)<0$. In fact, direct calculation shows
\begin{equation}\label{Exam:e1}\begin{aligned}
&\big(\theta {}_0^l I_x^{\beta} + (1-\theta) {}_x^r I_1^{\beta}\big) D w(x) \bigl |_{x=\f1{4}} \\
&\quad = \frac{4^{1-\beta}}{\Gamma(\beta+1)} \big[(2\theta-1) + (1-\theta) (1+3^\beta-2^{1+\beta})\bigr]
\end{aligned}\end{equation}
and
\begin{equation}\label{Exam:e2}\begin{aligned}
&\big(\theta {}_0^lI_x^{\beta} + (1-\theta){}_x^rI_1^{\beta}\big) D w(x) \bigl |_{x=\f3{4}} \\
& \quad = \frac{4^{1-\beta}}{\Gamma(\beta+1)} \bigl [(1-2\theta) + \theta(1+3^\beta-2^{1+\beta})\bigr].
\end{aligned}\end{equation}
It is easy to check that $1+3^\beta-2^{1+\beta} \leq 0$ for $ 0 \leq \beta \leq 1$. Moreover,  $1+3^\beta-2^{1+\beta}=0$ if and only if $\beta=0$ or $\beta=1$. Hence, in the current context of $0 < \beta < 1$,  $\lambda = \lambda(\beta) = 2^{1+\beta}-1-3^\beta >0$. 

We now consider the case $0 \leq \theta \leq 1/2$. We observe from (\ref{Exam:e1} that
$$\begin{aligned}
-\frac{4^{1-\beta}(1+\lambda)}{\Gamma(\beta+1)} &\le \big(\theta {}_0^l I_x^{\beta} + (1-\theta) {}_x^r I_1^{\beta}\big) D w(x) \bigl |_{x=\f1{4}} \\
&= \frac{4^{1-\beta}}{\Gamma(\beta+1)} \big[(2\theta-1) -(1-\theta)\lambda \bigr] \le -\frac{4^{1-\beta}\lambda}{2\Gamma(\beta+1)}. \end{aligned}$$
By the continuity of ${}_0^lI_x^{\beta} D w(x)$ and ${}_x^r I_1^{\beta}D w(x)$ there exists a $0 < \delta \leq 1/4$ such that
$$-\frac{2\cdot4^{1-\beta}(1+\lambda)}{\Gamma(\beta+1)} \leq \bigl(\theta {}_0^l{D}_x^{-\beta} + (1-\theta) {}_x^r D_1^{1-\beta} \big) D w
\leq -\frac{4^{1-\beta}\lambda}{4\Gamma(\beta+1)}, \quad  x \in \Bigl[\f1{4}-\delta,\f1{4}\Bigr].$$
We accordingly define a diffusivity coefficient $K(x)$ as follows
$$K(x) :=\left\{ \begin{array}{ll}
K_l, \quad & \ds x \in \Bigl(0,\f{1}{4}-\delta\Bigr),\\[0.05in]
1, \quad & \ds x \in \Bigl(\f1{4}-\delta,\f{1}{4}\Bigr),\\[0.1in]
K_r, & \ds x \in \Bigl(\f{1}{4},1\Bigr),
\end{array}\right.$$
where $K_l$ and $K_r$ are positive constants to be determined. Then we have
$$\begin{aligned}
B(w,w)= & 4 K_l\int^{\f1{4}-\delta}_0(\theta {}_0^lI_x^{\beta} + (1-\theta) {}_x^r I_1^{\beta})D wdx + 4 \int^{\f1{4}}_{\f1{4}-\delta}(\theta {}_0^lI_x^{\beta} + (1-\theta){}_x^rI_1^{\beta})D wdx \\
& -4 K_r\int^{\f3{4}}_{\f1{4}}(\theta {}_0^lI_x^{\beta} + (1-\theta) {}_x^r I_1^{\beta}) D wdx + 4 K_r\int^1_{\f3{4}}(\theta {}_0^lI_x^{\beta} + (1-\theta){}_x^rI_1^{\beta})D wdx \\
\le & -\frac{4^{1-\beta}\lambda\delta}{\Gamma(\beta+1)} + 4 K_l \int^{\f1{4}-\delta}_0 \bigl | (\theta {}_0^lI_x^{\beta}+ (1-\theta){}_x^rI_1^{\beta})D w \bigr|dx \\
& + 4 K_r\int^1_{\f1{4}} \bigl |(\theta {}_0^lI_x^{\beta} +(1-\theta){}_x^rI_1^{\beta})D w \bigr|dx.
\end{aligned}$$
We note that the integrands in the two integrals on the right-hand side are uniformly bounded from above with respect to $\theta \in [0,1/2]$. Hence, by choosing the positive constants $K_l$ and $K_r$ sufficiently small we can enforce $B(w,w) < 0$. We can similarly prove the conclusion for the case of $1/2 \leq \theta \leq 1$ by using (\ref{Exam:e2}).

We also observe from the proof that we can connect the piecewise constant diffusivity coefficient $K(x)$ as smooth as one desired such that the bilinear form $B(w,w)$ still loses its coercivity at least for some $w \in H^1_0(0,1)$.
\end{proof}

\begin{remark}
We observe from the proof of Lemma \ref{lem:Example} that the fundamental reason for the bilinear form $B(w,w)$ to lose its coercivity is that $Dw$ and $(\theta {}_0^l{I}_x^{\beta} + (1-\theta) {}_x^rI_1^{\beta}) Dw$ do not always retain the same sign for all the functions $w \in H^1_0(0,1)$. As long as there exists one function $w$ and a subinterval on which $(\theta {}_0^l{I}_x^{\beta} +(1-\theta){}_x^rI_1^{\beta}) Dw) ~Dw < 0$, one can always enforce $B(w,w) < 0$ by choosing a specific diffusivity coefficient $K$ appropriately. Finally, a careful examination of the counterexample shows that the bilinear form $B(w,w)$ with a variable diffusivity coefficient having large variations might lose its coercivity.
\end{remark}

\section{A two-sided fractional integral operator $I^{\beta}_\theta$ and its properties}

Besides the pioneer work \cite{ErvRoo05} of Ervin and Roop on the well-posedness of the two-sided FDE (\ref{Model:e1}) with a constant diffusivity coefficient $K$, virtually almost all the rest of the well-posedness results were proved for the FDE (\ref{Model:e1}) are only for a one-sided simplification of problem (\ref{Model:e1}) with either a constant diffusivity coefficient $K$ \cite{JinLaz} or a variable diffusivity coefficient $K$ \cite{WanYan,WanYanZhu}. To the best of our knowledge, there is no well-posedness result on problem (\ref{Model:e1}) with a variable diffusivity coefficient $K$. Moreover, there is no regularity result in the literature for the two-sided problem (\ref{Model:e1}) even for a constant diffusivity coefficient $K$.

To study the well-posedness and some regularity result of the two-sided problem (\ref{Model:e1}) we introduce the following two-sided fractional integral operator $I^{\beta}_\theta$ for $ 0 < \beta <1$ and $ 0 \leq \theta \leq 1$:
\begin{equation}\label{Int:e0} I^{\beta}_\theta w = \theta~{}^{l}_aI^{\beta}_xw+(1-\theta)~{}^{r}_xI^{\beta}_bw.\end{equation}
We note that in the case of $\theta = $ 0 or 1 
the two-sided integral operator $I^{\beta}_\theta$ reduces to the well known Volterra integral operators, which have been well studied \cite{Kre,SamKil}. However, $I^{\beta}_\theta$ is a convex combination of two Volterra integral operators for $0 < \theta < 1$, for which there seems to be little study in the literature. We study its properties in the following theorem.

Without loss of generality from now on we assume $a = 0$ and $b = 1$ for simplicity of presentation. We apply Lemma \ref{lem:2C7} to express the boundary-value problem (\ref{Model:e1}) in terms of the fractional integral operator $I^{\beta}_\theta$ as follows
\begin{equation}\label{Int:e1}\begin{array}{rcl}
\ds - D \bigl ( KD I^{\beta}_\theta u \bigr ) &=& f(x), \quad  x \in (0,1), \\[0.075in]
u(0) = u(1) &=& 0.
\end{array}\end{equation}
In other word, the two-sided variable-coefficient FDE in problem (\ref{Model:e1}) can naturally be rewritten as a canonical second-order diffusion equation in terms of $I^{\beta}_\theta u$. However, the homogeneous Dirichlet boundary condition in terms of $u$ cannot be naturally expressed in terms of $I^{\beta}_\theta u$, and in fact becomes one of the fundamental difficulties to overcome in the study of problem (\ref{Model:e1}).

\begin{lemma}\label{lem:dbeta} Let $ 0 < \beta <1/2$ and $0 \le \theta \le 1$. Then for any $w \in H^{1-\beta}_0(0,1)$ we have
$$\begin{array}{rl}
\ds \f{1-\cos(\pi \beta)}2 |w|_{J^{1-\beta,\theta}(0,1)}^2 & \ds \le \big(1-\cos(\pi \beta)\big) \big(\theta^2 + (1-\theta)^2 \big) |w|_{J^{1-\beta,\theta}(0,1)}^2 \\[0.05in]
& \ds \le \big \| D I^{\beta}_\theta w \big \|_{L^2(0,1)}^2 = \big \| I^{\beta}_\theta D w \big \|_{L^2(0,1)}^2 \\ [0.1in]
& \ds \le \big(1 + \cos(\pi \beta)\big) \big(\theta^2 + (1-\theta)^2 \big) |w|_{J^{1-\beta,\theta}(0,1)}^2\\[0.1in]
& \ds \le \big(1 + \cos(\pi \beta)\big)  |w|_{J^{1-\beta,\theta}(0,1)}^2
\end{array}$$
where $| \cdot |_{J^{1-\beta,\theta}(0,1)}$ is defined below (\ref{FDE:e2}). \end{lemma}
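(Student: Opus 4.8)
The plan is to prove the statement first for $w\in C^\infty_0(0,1)$, where every fractional operator acts on a genuinely smooth function, and then to pass to a general $w\in H^{1-\beta}_0(0,1)$ by density. So fix $w\in C^\infty_0(0,1)$; then $w(0)=w(1)=0$, and Lemma~\ref{lem:2C7} gives $D\,{}^{l}_{0}I_{x}^{\beta}w={}^{l}_{0}I_{x}^{\beta}Dw$ and $D\,{}^{r}_{x}I_{1}^{\beta}w={}^{r}_{x}I_{1}^{\beta}Dw$ on $(0,1)$. Taking the convex combination shows $DI^{\beta}_{\theta}w=I^{\beta}_{\theta}Dw$ pointwise on $(0,1)$, which is exactly the middle equality in the lemma. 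By the Caputo definition recalled just before (\ref{FDE:e2}) (with $m=1$, $\sigma=\beta$, $\mu=1-\beta$) the two one-sided pieces may be rewritten as ${}^{l}_{0}I_{x}^{\beta}Dw={}^{l}_{0}D_{x}^{1-\beta}w$ and ${}^{r}_{x}I_{1}^{\beta}Dw=-\,{}^{r}_{x}D_{1}^{1-\beta}w$; hence $\|{}^{l}_{0}I_{x}^{\beta}Dw\|_{L^2(0,1)}=|w|_{J^{1-\beta}_{l}(0,1)}$ and $\|{}^{r}_{x}I_{1}^{\beta}Dw\|_{L^2(0,1)}=|w|_{J^{1-\beta}_{r}(0,1)}$, and by Lemma~\ref{lem:2C5} (or Lemma~\ref{lem:2C2}) these two seminorms coincide; write $L$ for their common value.

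The core of the argument is the computation
\begin{align*}
\|DI^{\beta}_{\theta}w\|^2_{L^2(0,1)}
={}&\theta^2\big\|{}^{l}_{0}I_{x}^{\beta}Dw\big\|^2
+2\theta(1-\theta)\big({}^{l}_{0}I_{x}^{\beta}Dw,{}^{r}_{x}I_{1}^{\beta}Dw\big)\\
&{}+(1-\theta)^2\big\|{}^{r}_{x}I_{1}^{\beta}Dw\big\|^2 .
\end{align*}
The crux is the cross term. Using the two identifications above, Lemma~\ref{lem:2C5} with $\mu=1-\beta$, and the elementary identity $\cos(\pi(1-\beta))=-\cos(\pi\beta)$, one gets
\[
\big({}^{l}_{0}I_{x}^{\beta}Dw,{}^{r}_{x}I_{1}^{\beta}Dw\big)
=-\big({}^{l}_{0}D_{x}^{1-\beta}w,{}^{r}_{x}D_{1}^{1-\beta}w\big)
=-\cos(\pi(1-\beta))\,L^2=\cos(\pi\beta)\,L^2 ,
\]
and therefore $\|DI^{\beta}_{\theta}w\|^2_{L^2(0,1)}=\big(\theta^2+(1-\theta)^2+2\theta(1-\theta)\cos(\pi\beta)\big)L^2$. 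I expect the only genuinely delicate point to be getting this cross term right: there are two sign reversals — one from the right-Caputo convention, one from $\cos(\pi(1-\beta))=-\cos(\pi\beta)$ — and they combine to turn a possibly negative cross term into a nonnegative one. This is precisely the structural reason a two-sided estimate survives here even though the Galerkin bilinear form $B(\cdot,\cdot)$ of (\ref{Prv:e1}) may lose its coercivity (cf.\ Lemma~\ref{lem:Example}).

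It then remains to read off the asserted chain and to remove the smoothness assumption. Since $0<\beta<1/2$ we have $0<\cos(\pi\beta)<1$, and for $0\le\theta\le1$ one has $0\le2\theta(1-\theta)\le\theta^2+(1-\theta)^2\le1$ and $\theta^2+(1-\theta)^2\ge\tfrac12$; moreover, by the definition below (\ref{FDE:e2}) together with $|w|_{J^{1-\beta}_{l}}=|w|_{J^{1-\beta}_{r}}$, one has $|w|^2_{J^{1-\beta,\theta}(0,1)}=(\theta^2+(1-\theta)^2)L^2$. Dropping the nonnegative cross term gives the lower bound $\|DI^{\beta}_{\theta}w\|^2\ge(\theta^2+(1-\theta)^2)L^2$, while $2\theta(1-\theta)\cos(\pi\beta)\le(\theta^2+(1-\theta)^2)\cos(\pi\beta)$ gives $\|DI^{\beta}_{\theta}w\|^2\le(1+\cos(\pi\beta))(\theta^2+(1-\theta)^2)L^2$; the remaining links in the displayed chain are then the numerical comparisons $\tfrac12\le\theta^2+(1-\theta)^2\le1$. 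Finally, for general $w\in H^{1-\beta}_0(0,1)$ take $w_n\in C^\infty_0(0,1)$ with $w_n\to w$ in $H^{1-\beta}_0$; by Corollary~\ref{cor:2C3}, $w_n\to w$ also in the $J^{1-\beta,\theta}$ seminorm, the operator $I^{\beta}_{\theta}$ is bounded on $L^2(0,1)$ (its kernel lies in $L^1(0,1)$), and by the identity just established $DI^{\beta}_{\theta}w_n=I^{\beta}_{\theta}Dw_n$ is Cauchy in $L^2(0,1)$; passing to the limit shows that $DI^{\beta}_{\theta}w$ belongs to $L^2(0,1)$, coincides with the $L^2$-limit of the $I^{\beta}_{\theta}Dw_n$, and inherits all the estimates. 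This density step is routine; note only that the hypothesis $1-\beta>1/2$ is what makes functions in $H^{1-\beta}_0(0,1)$ continuous with meaningful vanishing endpoint values, and that $\beta<1/2$, hence $\cos(\pi\beta)>0$, is what keeps the lower bound strictly positive.
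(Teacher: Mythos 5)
Your proof is correct and follows essentially the same route as the paper's: write $DI^{\beta}_\theta w=\theta\,{}_0^lD_x^{1-\beta}w-(1-\theta)\,{}_x^rD_1^{1-\beta}w$, expand the $L^2$ norm squared, and evaluate the cross term via Lemma \ref{lem:2C5} together with $\cos(\pi(1-\beta))=-\cos(\pi\beta)$; your justification of the commutation $DI^{\beta}_\theta w=I^{\beta}_\theta Dw$ through Lemma \ref{lem:2C7} and your closing density argument are more careful than the paper's proof, which states the expansion directly for $w\in H^{1-\beta}_0(0,1)$ and leaves the upper bound to symmetry. One caveat, which is a defect of the lemma's statement rather than of your argument: writing $s:=\theta^2+(1-\theta)^2$ and $L:=\|{}_0^lD_x^{1-\beta}w\|_{L^2(0,1)}$, both you and the paper obtain $\|DI^{\beta}_\theta w\|^2_{L^2(0,1)}=\bigl(s+2\theta(1-\theta)\cos(\pi\beta)\bigr)L^2$ and $|w|^2_{J^{1-\beta,\theta}(0,1)}=sL^2$, so the literal third expression in the displayed chain, $(1+\cos(\pi\beta))\,s\,|w|^2_{J^{1-\beta,\theta}(0,1)}=(1+\cos(\pi\beta))s^2L^2$, is \emph{not} an upper bound: at $\theta=1/2$ it equals $\tfrac{1}{4}(1+\cos(\pi\beta))L^2$ while $\|DI^{\beta}_\theta w\|^2=\tfrac{1}{2}(1+\cos(\pi\beta))L^2$. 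The intended intermediate quantities are evidently $(1\mp\cos(\pi\beta))\,s\,L^2=(1\mp\cos(\pi\beta))\,|w|^2_{J^{1-\beta,\theta}(0,1)}$, and these (together with the outer bounds, which are all that is used later in the paper) are exactly what your computation establishes; your lower bound, obtained by simply dropping the nonnegative cross term, is in fact slightly sharper than the paper's, which bounds $2\theta(1-\theta)\cos(\pi\beta)$ below by $-s\cos(\pi\beta)$.
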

\begin{proof} We use the definition of $I^{\beta}_\theta w$ to obtain
$$\begin{aligned}
& \big \|D I^{\beta}_\theta w \big \|_{L^2(0,1)}^2 \\
& \ds  = \bigl ( \theta~{}_0^lD^{1-\beta}_x w - (1-\theta)~{}_x^rD^{1-\beta}_1 w,
\theta~{}_0^lD^{1-\beta}_x w - (1-\theta)~{}_x^rD^{1-\beta}_1 w \bigr)_{L^2(0,1)}\\
& \quad = \theta^2 \|{}_0^lD^{1-\beta}_x w \|_{L^2(0,1)}^2 + (1-\theta)^2
\| {}_x^rD^{1-\beta}_1 w \|_{L^2(0,1)}^2\\[0.025in]
&\ds \qquad -2\theta(1-\theta) \bigl ({}_0^lD^{1-\beta}_xw, {}_x^rD^{1-\beta}_1w \bigr)_{L^2(0,1)}
\\[0.025in]
&\quad = \theta^2 \|{}_0^lD^{1-\beta}_x w \|_{L^2(0,1)}^2 + (1-\theta)^2
\| {}_0^l D^{1-\beta}_x w \|_{L^2(0,1)}^2\\[0.025in]
&\ds \qquad +2\theta(1-\theta) \cos(\pi \beta)  \|{}_0^lD^{1-\beta}_x w \|_{L^2(0,1)}^2 \\[0.025in]
&\quad \ge \theta^2 \|{}_0^lD^{1-\beta}_x w \|_{L^2(0,1)}^2 + (1-\theta)^2
\| {}_0^l D^{1-\beta}_x w \|_{L^2(0,1)}^2\\[0.025in]
&\ds \qquad -(\theta^2 + (1-\theta)^2) \cos(\pi \beta)  \|{}_0^lD^{1-\beta}_x w \|_{L^2(0,1)}^2 \\[0.025in]
&\quad = (1 - \cos(\pi \beta)) |w|_{J^{1-\beta,\theta}(0,1)}^2.
\end{aligned}$$
The other inequality can be proved similarly. \end{proof}

We are now in the position to study the properties of $I^{\beta}_\theta w$. 

\begin{theorem}\label{thm:Range} Let $0 < \beta < 1/2$ and $0 \le \theta \le 1$. The fractional integral operator $I^{\beta}_\theta$ is a bounded linear bijection from $H^{1-\beta}_0(0,1)$ to its closed range $\mathcal{R}(I^{\beta}_\theta) \subset H^1(0,1)$ with
$$\mathcal{R}(I^{\beta}_\theta) :=\big \{ w \in H^1(0,1): \exists \phi \in H^{1-\beta}_0(0,1),~~ s.t. ~~ w = I^{\beta}_\theta \phi \big\}.$$
Moreover, the inverse operator $(I^{\beta}_\theta)^{-1}$ of $I^{\beta}_\theta$ is also bounded.
\end{theorem}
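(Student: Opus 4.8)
The plan is to reduce the statement to two norm inequalities for the linear map $I^{\beta}_\theta : H^{1-\beta}_0(0,1) \to H^1(0,1)$ — an upper bound (continuity) and a lower bound (bounded below) — and then to invoke the elementary fact that a bounded linear operator between Banach spaces that is bounded below is injective, has closed range, and has bounded inverse on that range. Since $C^\infty_0(0,1)$ is dense in $H^{1-\beta}_0(0,1)$ by definition, it suffices to prove both inequalities for $w \in C^\infty_0(0,1)$ and then pass to the limit.

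For the continuity, fix $w \in C^\infty_0(0,1)$. By Lemma \ref{lem:2C7} (applicable since $w(0)=w(1)=0$) one has $D I^{\beta}_\theta w = I^{\beta}_\theta D w = \theta\,{}^l_0 D^{1-\beta}_x w - (1-\theta)\,{}^r_x D^{1-\beta}_1 w$, so the upper estimate of Lemma \ref{lem:dbeta} gives $\|D I^{\beta}_\theta w\|_{L^2(0,1)}^2 \le (1+\cos(\pi\beta))\,|w|_{J^{1-\beta,\theta}(0,1)}^2$, and Corollary \ref{cor:2C3} (valid since $1-\beta>1/2$) bounds the right side by $C\,|w|_{H^{1-\beta}(0,1)}^2$. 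For the $L^2$-part I would use the classical boundedness of the Riemann--Liouville integral operators ${}^l_0 I^{\beta}_x$ and ${}^r_x I^{\beta}_1$ on $L^2(0,1)$ (their kernels $t^{\beta-1}$ are integrable on $(0,1)$ since $\beta>0$, so this is Young's convolution inequality), giving $\|I^{\beta}_\theta w\|_{L^2(0,1)} \le C\|w\|_{L^2(0,1)}$. Adding the two estimates yields $\|I^{\beta}_\theta w\|_{H^1(0,1)} \le C\|w\|_{H^{1-\beta}(0,1)}$; extending by density shows $I^{\beta}_\theta$ maps $H^{1-\beta}_0(0,1)$ boundedly into $H^1(0,1)$, so in particular $\mathcal{R}(I^{\beta}_\theta)\subset H^1(0,1)$.

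For the lower bound, combine the same identity with the lower estimate of Lemma \ref{lem:dbeta}: since $0<\beta<1/2$ we have $1-\cos(\pi\beta)>0$, hence $\|I^{\beta}_\theta w\|_{H^1(0,1)}^2 \ge \|D I^{\beta}_\theta w\|_{L^2(0,1)}^2 \ge \tfrac{1-\cos(\pi\beta)}{2}\,|w|_{J^{1-\beta,\theta}(0,1)}^2 \ge c\,|w|_{H^{1-\beta}(0,1)}^2$ by the lower bound in Corollary \ref{cor:2C3}. The fractional Poincar\'e inequality (Lemma \ref{lem:2C1}, valid since $1-\beta>1/2$) then upgrades the semi-norm to the full norm, so $\|I^{\beta}_\theta w\|_{H^1(0,1)} \ge c_\ast\,\|w\|_{H^{1-\beta}(0,1)}$ with $c_\ast>0$, first for $w\in C^\infty_0(0,1)$ and then, by density, for every $w\in H^{1-\beta}_0(0,1)$.

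With boundedness and the lower bound in hand the conclusion is routine functional analysis: the lower bound forces $I^{\beta}_\theta$ to be injective, and if $I^{\beta}_\theta w_n \to v$ in $H^1(0,1)$ then $\{w_n\}$ is Cauchy in $H^{1-\beta}_0(0,1)$, hence converges to some $w$ with $v=I^{\beta}_\theta w$; thus $\mathcal{R}(I^{\beta}_\theta)$ is a closed subspace of $H^1(0,1)$, $I^{\beta}_\theta$ is a bijection onto it, and the lower bound reads $\|(I^{\beta}_\theta)^{-1}v\|_{H^{1-\beta}(0,1)} \le c_\ast^{-1}\|v\|_{H^1(0,1)}$, i.e. $(I^{\beta}_\theta)^{-1}$ is bounded. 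The one point that needs care — and the reason for working from $C^\infty_0(0,1)$ — is making rigorous that the $L^2$-function $I^{\beta}_\theta w$ really belongs to $H^1(0,1)$ for $w$ merely in $H^{1-\beta}_0(0,1)\not\subset W^{1,1}(0,1)$: the identity $D I^{\beta}_\theta w = I^{\beta}_\theta D w$ of Lemma \ref{lem:2C7} is stated for $W^{1,1}$ data, so one must check that the $L^2$-continuous extension of $I^{\beta}_\theta$ (defined on all of $L^2$) coincides with the $H^1$-valued extension produced by the density argument, so that the distributional derivative of $I^{\beta}_\theta w$ is indeed the $L^2$-limit of $I^{\beta}_\theta Dw_n$. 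The hypothesis $\beta<1/2$ enters exactly here and in the two places above, ensuring $1-\beta>1/2$ (so Lemma \ref{lem:2C1} and Corollary \ref{cor:2C3} apply) and $\cos(\pi\beta)<1$ (so the lower bound in Lemma \ref{lem:dbeta} is non-degenerate).
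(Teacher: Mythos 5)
Your proof is correct and follows essentially the same route as the paper: continuity via the upper bound in Lemma \ref{lem:dbeta} combined with Corollary \ref{cor:2C3}, the lower bound $\|I^{\beta}_\theta w\|_{H^1(0,1)} \ge c_*\|w\|_{H^{1-\beta}(0,1)}$ via the lower bound in Lemma \ref{lem:dbeta} together with the norm equivalences and the fractional Poincar\'e inequality, and then the standard functional-analytic conclusion that a bounded-below operator is injective with closed range and bounded inverse. The only cosmetic difference is the $L^2$-part of the continuity estimate, where you invoke Young's convolution inequality while the paper writes ${}_0^lI^{\beta}_x\phi$ as the integral of its derivative from the endpoint and applies Cauchy--Schwarz (exploiting $H^{1-\beta}_0(0,1)\subset C[0,1]$ to avoid the density detour you flag); both are valid.
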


\begin{proof} We prove the theorem in two steps. At step 1 we prove that $I^{\beta}_\theta$ is a bounded linear operator from $H^{1-\beta}_0(0,1)$ to $H^1(0,1)$. It is clear that the integral $I^{\beta}_\theta \phi$ is well defined for any $\phi \in H^{1-\beta}(0,1) \subset C[0,1]$. Then we apply Lemma \ref{lem:2C2} to obtain
$$\begin{aligned}
\big \|DI^{\beta}_\theta \phi \big \|_{L^2(0,1)} &\leq \theta\|{}_0^lD^{1-\beta}_x \phi \|_{L^2(0,1)}+(1-\theta)\|{}_x^rD^{1-\beta}_1 \phi \|_{L^2(0,1)}\\
&\leq C| \phi |_{H^{1-\beta}(0,1)}, \quad \forall \ \phi \in H^{1-\beta}_0(0,1).
\end{aligned}$$
Furthermore, we have
$$\begin{aligned}
\big | {}_0^lI^{\beta}_x \phi \big| & = \Bigl |\int^x_0D({}_0^lI^{\beta}_s \phi) ds \Bigr | \leq \sqrt{x} \big \|{}_0^lD^{1-\beta}_x \phi \big \|_{L^2(0,1)},\\
\big | {}_x^rI_1^{\beta} \phi \big| & = \Bigl |\int^1_x D({}_s^rI_1^{\beta} \phi) ds \Bigr | \leq \sqrt{1-x} \big \|{}_x^r D^{1-\beta}_1\phi \big \|^2_{L^2(0,1)}.
\end{aligned}$$
Hence,
$$\big \| I^{\beta}_\theta \phi \big \|_{L^2(0,1)} \leq C | \phi |_{H^{1-\beta}(0,1)}, \quad \forall\ \phi \in H^{1-\beta}_0(0,1).$$
We combine the preceding estimates to finish the proof of step 1. 

At step 2 we prove that $I^{\beta}_\theta$ has a bounded inverse operator $(I^{\beta}_\theta)^{-1}$ from $\mathcal{R}(I^{\beta}_\theta)$ onto $H^{1-\beta}_0(0,1)$ and that $\mathcal{R}(I^{\beta}_\theta)$ is a closed subspace of $H^1(0,1)$. In fact, we apply Lemmas \ref{lem:2C2} and \ref{lem:dbeta} to conclude that for any $\phi \in H^{1-\beta}_0(0,1)$
$$\big \| I^{\beta}_\theta \phi \big \|^2_{H^1(0,1)} \ge \big \|D I^{\beta}_\theta \phi \big \|^2_{L^2(0,1)}
\ge \f{1-\cos(\pi \beta)}2 | \phi |_{J^{1-\beta,\theta}(0,1)}^2 \ge \eta \| \phi \|^2_{H^{1-\beta}(0,1)} $$
for some $\eta = \eta(\beta) >0$. Hence the operator $I^{\beta}_\theta$ is invertible and its inverse operator is bounded by $1/\eta$. Further, since both $I^{\beta}_\theta$ and its inverse operator are bounded linear operators, $\mathcal{R}(I^{\beta}_\theta)$ is a closed subspace of $H^1(0,1)$. \end{proof}

To further study the properties of the range $\mathcal{R}(I^{\beta}_\theta)$, let $P$ be a projection operator from $H^1(0,1)$ onto $H^1_0(0,1)$ defined as follows: for any $w \in H^1(0,1)$, seek $Pw \in H^1_0(0,1)$ such that
\begin{equation}\label{Int:e2}
(D Pw, D v)_{L^2(0,1)} = (D w, D v)_{L^2(0,1)}, \quad \forall\ v \in H^1_0(0,1).
\end{equation}
Let $\mathcal{N}(P)$ be the null space of the operator $P$
\begin{equation}\label{Int:e3}\begin{array}{rl}
\mathcal{N}(P) := & \bigl \{w \in H^1(0,1): \ Pw=0 \bigr \}\\[0.05in]
=& \bigl \{ w \in H^1(0,1): \ (D w, D v)_{L^2(0,1)} =0, \ \ \forall\ v \in H^1_0(0,1) \bigr\}.
\end{array}
\end{equation}
Choosing any $w \in H^1_0(0,1)$ concludes immediately that $H^1_0(0,1) = P(H^1(0,1))$. Hence,  the following decomposition
$$H^1(0,1) = H^1_0(0,1) \oplus \mathcal{N}(P)$$
holds. Since $\mathcal{R}(I^{\beta}_\theta) \subset H^1(0,1)$, $P(\mathcal{R}(I^{\beta}_\theta)) \subset H^1_0(0,1)$. In the next theorem we prove that the equality actually holds.

\begin{theorem}\label{thm:Decomp} Let $0 < \beta < 1/2$ and $0 \le \theta \le 1$. The following equalities hold
\begin{equation}\label{Int:e4} \mathcal{N}(P) = span\{w_l^c, w_r^c\}, \quad P\bigl(\mathcal{R}\bigl(I^{\beta}_\theta \bigr)\bigr) = H^1_0(0,1), \quad H^1(0,1) =\mathcal{R}\bigl(I^{\beta}_\theta \bigr) \oplus \mathcal{N}(P)
\end{equation}
where $w_l^c := 1-x$ and $w_r^c: = x$. 
\end{theorem}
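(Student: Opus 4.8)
The plan is to establish the three equalities of \eqref{Int:e4} in a natural order: first identify $\mathcal{N}(P)$ explicitly, then show the projection $P$ maps $\mathcal{R}(I^{\beta}_\theta)$ onto all of $H^1_0(0,1)$, and finally deduce the direct-sum decomposition from the first two facts together with $\mathcal{R}(I^{\beta}_\theta) \cap \mathcal{N}(P) = \{0\}$.

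\textbf{Step 1: computing $\mathcal{N}(P)$.} From the characterization in \eqref{Int:e3}, $w \in \mathcal{N}(P)$ iff $w \in H^1(0,1)$ and $(Dw, Dv)_{L^2(0,1)} = 0$ for all $v \in H^1_0(0,1)$; this says precisely that $Dw$ is $L^2$-orthogonal to $D(H^1_0(0,1)) = \{ g \in L^2(0,1) : \int_0^1 g = 0 \}$, hence $Dw$ is constant, so $w$ is affine. The affine functions on $(0,1)$ form the two-dimensional space $\mathrm{span}\{1, x\}$, which equals $\mathrm{span}\{w_l^c, w_r^c\} = \mathrm{span}\{1-x, x\}$. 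So $\mathcal{N}(P) = \mathrm{span}\{w_l^c, w_r^c\}$. (This first equality does not even use $\beta, \theta$; it is pure linear algebra on $H^1$.)

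\textbf{Step 2: $P(\mathcal{R}(I^{\beta}_\theta)) = H^1_0(0,1)$.} We already noted the inclusion $\subseteq$. For the reverse, fix $g \in H^1_0(0,1)$; we must produce $\phi \in H^{1-\beta}_0(0,1)$ with $P(I^{\beta}_\theta \phi) = g$. The natural candidate is $\phi := (I^{\beta}_\theta)^{-1}(g + c_l w_l^c + c_r w_r^c)$ for suitable constants $c_l, c_r$ — but this requires $g + c_l w_l^c + c_r w_r^c \in \mathcal{R}(I^{\beta}_\theta)$, which is not automatic. The cleaner route: since $I^{\beta}_\theta$ is a bounded bijection onto its closed range $\mathcal{R}(I^{\beta}_\theta)$ (Theorem \ref{thm:Range}) with bounded inverse, and $P$ restricted to $\mathcal{R}(I^{\beta}_\theta)$ is a bounded linear map into the Hilbert space $H^1_0(0,1)$, it suffices to show $P|_{\mathcal{R}(I^{\beta}_\theta)}$ is injective with closed range and dense range. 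For density, observe $C^\infty_0(0,1) \subset H^1_0(0,1)$ is dense, and for $g \in C^\infty_0(0,1)$ one can take $\phi = (I^{\beta}_\theta)^{-1} g$ provided $g \in \mathcal{R}(I^{\beta}_\theta)$; since $P g = g$ for $g \in H^1_0(0,1)$, it is enough that $H^1_0(0,1) \cap \mathcal{R}(I^{\beta}_\theta)$ is dense in $H^1_0(0,1)$, or more directly that for each $g$ one perturbs by an element of $\mathcal{N}(P)$ to land in $\mathcal{R}(I^{\beta}_\theta)$. Concretely, I expect the argument runs: given $g \in H^1_0(0,1)$, because $\mathcal{R}(I^{\beta}_\theta)$ is closed and $H^1(0,1) = H^1_0(0,1) \oplus \mathcal{N}(P)$ with $\dim \mathcal{N}(P) = 2$, a dimension/codimension count forces $\mathcal{R}(I^{\beta}_\theta) + \mathcal{N}(P) = H^1(0,1)$ once one checks $\mathcal{R}(I^{\beta}_\theta) \not\subset H^1_0(0,1)$ and that the two "missing directions" $w_l^c, w_r^c$ are not both recoverable — i.e., one shows $I^{\beta}_\theta$ applied to suitable $\phi$ produces functions with nonzero boundary values spanning, modulo $H^1_0$, a complement. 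This is where the behavior of ${}_0^lI_x^{\beta}\phi$ near $x=1$ and ${}_x^rI_1^{\beta}\phi$ near $x=0$ enters: for generic $\phi \in H^{1-\beta}_0(0,1)$ these do not vanish at the far endpoint, so $I^{\beta}_\theta\phi \notin H^1_0(0,1)$, and the map $\phi \mapsto \big((I^{\beta}_\theta\phi)(0), (I^{\beta}_\theta\phi)(1)\big) \in \mathbb{R}^2$ is surjective, which gives exactly $\mathcal{R}(I^{\beta}_\theta) + \mathcal{N}(P) = H^1(0,1)$ and hence $P(\mathcal{R}(I^{\beta}_\theta)) = H^1_0(0,1)$.

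\textbf{Step 3: the direct sum.} Given Steps 1–2, to get $H^1(0,1) = \mathcal{R}(I^{\beta}_\theta) \oplus \mathcal{N}(P)$ it remains to show the sum is direct, i.e. $\mathcal{R}(I^{\beta}_\theta) \cap \mathcal{N}(P) = \{0\}$: if $w = I^{\beta}_\theta\phi \in \mathcal{N}(P)$ then $Pw = 0$, but by Step 2's injectivity of $P$ on $\mathcal{R}(I^{\beta}_\theta)$ (which follows from the coercivity estimate $\|D I^{\beta}_\theta\phi\|_{L^2}^2 \ge \tfrac{1-\cos(\pi\beta)}{2}|\phi|_{J^{1-\beta,\theta}}^2 \ge \eta\|\phi\|_{H^{1-\beta}}^2$ of Lemma \ref{lem:dbeta}, since $Pw=0$ forces $Dw$ orthogonal to $D H^1_0$ hence $\|Dw\|_{L^2}$ controlled by... ) one needs a little care: $Pw = 0$ means $Dw \perp D(H^1_0)$, so $Dw$ is constant; combined with $w = I^{\beta}_\theta\phi$ and the fact that $D I^{\beta}_\theta \phi = I^{\beta}_\theta D\phi$ cannot be a nonzero constant for $\phi \in H^{1-\beta}_0$ (a constant is not in the range of these fractional integrals applied to a function with the requisite decay, or: a nonzero constant would contradict the norm equivalence), we conclude $Dw = 0$, hence $w$ affine and in $\mathcal{R}(I^{\beta}_\theta) \cap \mathcal{N}(P)$; then $w = I^{\beta}_\theta\phi$ with $Dw=0$ gives $I^{\beta}_\theta D\phi = 0$, so $D\phi = 0$ by injectivity of the Volterra-type operator $I^{\beta}_\theta$ on $L^2$, whence $\phi$ is constant and, being in $H^{1-\beta}_0(0,1)$, $\phi \equiv 0$, so $w = 0$. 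Combining with $\mathcal{R}(I^{\beta}_\theta) + \mathcal{N}(P) = H^1(0,1)$ from Step 2 yields the direct-sum decomposition.

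\textbf{Main obstacle.} The delicate point is Step 2, establishing surjectivity of $P$ onto $H^1_0(0,1)$ — equivalently $\mathcal{R}(I^{\beta}_\theta) + \mathcal{N}(P) = H^1(0,1)$. The core of this is an explicit endpoint-value computation: showing that the linear map sending $\phi \in H^{1-\beta}_0(0,1)$ to the pair of boundary values of $I^{\beta}_\theta\phi$ is onto $\mathbb{R}^2$ (so that $\mathcal{R}(I^{\beta}_\theta)$ meets every coset of $H^1_0(0,1)$ in $H^1(0,1)/H^1_0(0,1) \cong \mathbb{R}^2$). Since for $\phi \in H^{1-\beta}_0(0,1) \subset C[0,1]$ with $\phi(0)=\phi(1)=0$ one has $({}_0^lI_x^\beta\phi)(0) = 0$ automatically but $({}_0^lI_x^\beta\phi)(1) = \tfrac{1}{\Gamma(\beta)}\int_0^1 (1-s)^{\beta-1}\phi(s)\,ds$ which is a nonzero functional, and symmetrically for the right integral, a two-dimensional family of test functions $\phi$ makes this map surjective; this is the step requiring actual (if routine) computation with the kernels in \eqref{Model:e2}, and care that $\theta$ or $1-\theta$ vanishing does not collapse the rank (in those degenerate cases one of the two boundary values is always zero, but then $\mathcal{N}(P)$ still needs to be hit — which is consistent because the one-sided Volterra operator's range already has the right codimension).
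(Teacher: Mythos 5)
Your Step 1 is correct (and slightly cleaner than the paper's version of the same fact). The genuine gap is in Step 2, and it is a gap of substance, not of detail. You propose to get $\mathcal{R}(I^{\beta}_\theta)+\mathcal{N}(P)=H^1(0,1)$ from a ``dimension/codimension count'' combined with surjectivity of the boundary-value map $\phi\mapsto\bigl((I^{\beta}_\theta\phi)(0),(I^{\beta}_\theta\phi)(1)\bigr)$ onto $\mathbb{R}^2$. But that map only controls the image of $\mathcal{R}(I^{\beta}_\theta)$ in the quotient $H^1(0,1)/H^1_0(0,1)\cong\mathbb{R}^2$, and by Step 1 the summand $\mathcal{N}(P)=\mathrm{span}\{w_l^c,w_r^c\}$ already surjects onto that quotient; so your computation adds nothing. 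What must actually be shown is that every $g$ in the \emph{infinite-dimensional} space $H^1_0(0,1)$ equals $I^{\beta}_\theta\phi$ minus an affine correction, and no finite count applies: closedness of $\mathcal{R}(I^{\beta}_\theta)$ (Theorem \ref{thm:Range}) says nothing a priori about its codimension in $H^1(0,1)$, which could be infinite. The paper supplies the missing ingredient by invoking the Ervin--Roop wellposedness theorem (Theorem \ref{thm:Constant}): for $g\in H^1_0(0,1)\cap H^2(0,1)$ one solves the constant-coefficient Galerkin problem $\bigl(DI^{\beta}_\theta\phi,Dv\bigr)_{L^2(0,1)}=-\bigl(D^2g,v\bigr)_{L^2(0,1)}$ for all $v$, which yields $I^{\beta}_\theta\phi-g\in\mathcal{N}(P)$, i.e.\ $g\in\mathcal{U}$; one then passes to all of $H^1_0(0,1)$ by density, which in turn requires proving that $\mathcal{U}$ is \emph{closed} --- itself a nontrivial step in which the affine-correction constants $c_n,c'_n$ must be shown bounded (again via the uniqueness in Theorem \ref{thm:Constant}). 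None of this machinery appears in your proposal.

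A secondary problem is in your Step 3: you deduce $\mathcal{R}(I^{\beta}_\theta)\cap\mathcal{N}(P)=\{0\}$ from ``injectivity of the Volterra-type operator $I^{\beta}_\theta$ on $L^2$'' and from the claim that $I^{\beta}_\theta D\phi$ cannot be a nonzero constant. For $\theta=0$ or $1$ these are standard Volterra facts, but for $0<\theta<1$ the operator is a convex combination of a left and a right fractional integral, and (as the paper itself remarks) its injectivity is not a known off-the-shelf fact; your parenthetical justification is not a proof. The paper instead concludes triviality of the intersection from the uniqueness of the solution to the homogeneous constant-coefficient Galerkin problem: if $I^{\beta}_\theta\psi=cw_l^c+c'w_r^c$ then $\bigl(DI^{\beta}_\theta\psi,Dv\bigr)_{L^2(0,1)}=0$ for all $v\in H^1_0(0,1)$, whence $\psi\equiv0$ by Theorem \ref{thm:Constant}, and then $c=c'=0$ by linear independence. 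You should reroute both Step 2 and Step 3 through Theorem \ref{thm:Constant}; as written, the proposal does not close.
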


\begin{proof} We prove the theorem in four steps. As the first step we prove the first equality in (\ref{Int:e4}). Note that $w_l^c$ and $w_r^c$ in $H^1(0,1)$ satisfy
\begin{equation}\label{Int:e5}\begin{aligned}
& (Dw_l^c,Dv)_{L^2(0,1)} = 0, \ \ \forall \ v \in H^1_0(0,1); \quad w_l^c(0)=1, ~w_l^c(1)=0;\\[0.025in]
& (Dw_r^c,Dv)_{L^2(0,1)} = 0, \ \ \forall \ v \in H^1_0(0,1); \quad w_r^c(0)=0, ~w_r^c(1)=1.
\end{aligned}
\end{equation}
Hence, $w_l^c, w_r^c \in \mathcal{N}(P)$. Conversely, for any $w \in \mathcal{N}(P)$, it is clear that $w - w(0)w_l^c - w(1)w_r^c \in H^1_0(0,1) \cap \mathcal{N}(P)$ as
$$\bigl(D(w - w(0)w_l^c - w(1)w_r^c), Dv \bigr)_{L^2(0,1)} = 0, \ \ \forall \ v \in H^1_0(0,1).$$
Choosing $v = w - w(0)w_l^c - w(1)w_r^c$ in this equation concludes that $w = w(0)w_l^c + w(1)w_r^c$. Thus, the first equality in (\ref{Int:e4}) holds. 

As the next step, we prove that the space
\begin{equation}\label{Int:e5a}
\mathcal{U} := \bigl \{ w \in H^1_0(0,1): \exists\ \phi \in H^{1-\beta}_0(0,1)~\mathrm{and}~c, c' \in \mathbb{R} ~\mathrm{s.t.} ~I^{\beta}_\theta \phi = w + c w_l^c + c' w_r^c \bigr \}\end{equation}
is a closed subspace of $H^1_0(0,1)$.

First, it is clear that $\mathcal{U}$ is a subspace of $H^1_0(0,1)$. Let $w_1, w_2 \in \mathcal{U}$ and $a_1, a_2 \in \mathbb{R}$, then there exist $\phi_1, \phi_2 \in H^{1-\beta}_0(0,1)$ such that 
$$I^{\beta}_\theta (a_1 \phi_1 + a_2 \phi_2) = (a_1w_1 + a_2w_2) + (a_1 c_1 + a_2 c_2) w_l^c + (a_1 c'_1 + a_2 c'_2) w_r^c.$$
Hence, $a_1 w_1 + a_2 w_2 \in \mathcal{U}$. We now prove that $\mathcal{U}$ is closed. To do so, let $\{w_n\}^\infty_{n=1} \subset \mathcal{U}$ be a sequence that converges to $w \in H^1_0(0,1)$. By definition of $\mathcal{U}$, there exist sequences $\{ \phi_n\}_{n=1}^\infty \subset H^{1-\beta}_0(0,1)$ and $\{c_n\}_{n=1}^\infty$, $\{c'_n\}_{n=1}^\infty \subset \mathbb{R}$ such that
\begin{equation}\label{Int:e6}
I^{\beta}_\theta \phi_n = w_n + c_n w_l^c + c'_n w_r^c \in \mathcal{R}(I^{\beta}_\theta), \quad n \ge 1.
\end{equation}

As $\{\|w_n\|_{H^1(0,1)}\}^\infty_{n=1}$ is bounded, we claim that both sequences $\{c_n\}_{n=1}^\infty$ and $\{c'_n\}_{n=1}^\infty$ are bounded. Otherwise, there exists a subsequence $\{n_k\}_{k=1}^\infty$ such that $\lim_{k \rightarrow \infty} \max\{|c_{n_k}|, |c'_{n_k}|\}=+\infty$. Without loss of generality, we assume that $|c_{n_k}| = \max\{|c_{n_k}|, |c'_{n_k}|\}$ so $\lim_{n_k \rightarrow \infty} |c_{n_k}| = \infty$. Since $|c'_{n_k}|/|c_{n_k}| \leq 1$, there exists a subsequence which we still denote by $\{c'_{n_k}/c_{n_k}\}$ such that $\lim_{k \rightarrow \infty }c'_{n_k}/c_{n_k} = c'$. Let $\psi_{n_k} = \phi_{n_k}/c_{n_k}$. Then $\psi_{n_k} \in H^{1-\beta}_0(0,1)$. We have
$$I^{\beta}_\theta\psi_{n_k}= \frac{1}{c_{n_k}}\big(w_{n_k}+c_{n_k}w_l^c+c'_{n_k}w_r^c \big) \rightarrow w_l^c + c' w_r^c, \ \  \ as \  \ n_k \rightarrow \infty.$$
As $\mathcal{R}(I^{\beta}_\theta)$ is closed, $w_l^c + c' w_r^c \in \mathcal{R}(I^{\beta}_\theta)$. By Theorem \ref{thm:Range}, there exists a $\psi \in H^{1-\beta}_0(0,1)$ such that $I^{\beta}_\theta \psi = w_l^c + c' w_r^c$.
Consequently,
\begin{equation}\label{Int:e6a}
\bigl (D(I^{\beta}_\theta\psi),Dv \bigr)_{L^2(0,1)} = \bigl (D (w_l^c + c' w_r^c),Dv \bigr)_{L^2(0,1)} =0, \ \ \forall\ v \in H^1_0(0,1).
\end{equation}
Since this problem apparently has a trivial solution, the uniqueness of the solution of the problem ensured by Lemma \ref{lem:2C7} and Theorem \ref{thm:Constant} concludes that $\psi \equiv 0$. That is, $0 = w_l^c + c' w_r^c$. This contradicts to the linear independence of $w_l^c$ and $w_r^c$. We thus have proved that $\max\{|c_n|,|c'_n|\}$ is bounded.

Consequently, there exist convergent subsequences $(c_{n_k},c'_{n_k})$ that converge to $(c,c')$ as $k \rightarrow \infty$. We pass the limit in (\ref{Int:e6}) to the subsequence to deduce that there exists a $\phi \in H^{1-\beta}_0(0,1)$ such that
$$\lim_{k \rightarrow \infty}\phi_n = \lim_{k \rightarrow \infty}(I^{\beta}_\theta)^{-1}(w_{n_k}+c_{n_k} w_l^c + c'_{n_k} w_r^c) = (I^{\beta}_\theta)^{-1}(w +c w_l^c + c' w_r^c) = \phi.$$
That is, $I^{\beta}_\theta \phi = w + c w_l^c + c' w_r^c$ which implies that $w \in \mathcal{U}$. We have thus proved that $\mathcal{U}$ is closed.

At the third step, we prove that $\mathcal{U} = H^1_0(0,1)$. In fact, for any $g \in H^1_0(0,1) \bigcap H^2(0,1)$, Lemma \ref{lem:2C7} and Theorem \ref{thm:Constant} ensure that the problem
$$ \big(D I^{\beta} \phi,Dv \big)_{L^2(0,1)} = - \big(D^2 g, v \big)_{L^2(0,1)}, \ \ \forall\ v \in H^{1-\frac{\beta}{2}}_0(0,1)$$
has a unique solution $\phi \in H^{1-\frac{\beta}{2}}_0(0,1)$. This equation can then be rewritten as
$$\bigl (D(I^{\beta}\phi - g),Dv \bigr)_{L^2(0,1)} = 0, \quad \forall\ v \in H^1_0(0,1).$$
This implies that $I^{\beta}\phi-g \in \mathcal{N}(P)$. Hence, there exist constants $c$ and $c'$ such that $I^{\beta}\phi = g + c w_l^c + c' w_r^c$.
This shows that $g \in \mathcal{U}$ for any $g \in H^1_0(0,1) \cap H^2(0,1)$. That is, $H^1_0(0,1) \cap H^2(0,1) \subset \mathcal{U} \subset H^1_0(0,1)$. Since $H^1_0(0,1) \cap H^2(0,1)$ is dense in $H^1_0(0,1)$ and $\mathcal{U}$ is closed, we conclude that $\mathcal{U} = H^1_0(0,1)$.

Finally, at step 4 we prove the last equality in (\ref{Int:e4}). For any $w \in H^1(0,1)$, it is clear that $v = w - w(0)w_l^c - w(1)w_r^c \in H^1_0(0,1) = \mathcal{U}$. By (\ref{Int:e5a}), there exist $\phi \in H^{1-\beta}_0(0,1)$ and $c, c' \in \mathbb{R}$ such that $I^{\beta}_\theta \phi = v + c w_l^c + c' w_r^c$.
That is, 
$$w = I^{\beta}_\theta \phi + (w(0)-c)w_l^c + (w(1)-c') w_r^c.$$
We thus prove $H^1(0,1) = \mathcal{R}(I^{\beta}_\theta) + \mathcal{N}(P)$. We now prove $\mathcal{R}(I^{\beta}_\theta) \cap \mathcal{N}(P) = \emptyset$. 
For any $w \in \mathcal{R}(I^{\beta}_\theta) \cap \mathcal{N}(P)$, there exists $\psi \in H^{1-\beta}_0(0,1)$ and $c, c' \in \mathbb{R}$ such that 
$$w = I^{\beta}_\theta \psi = c w_l^c + c' w_r^c.$$ 
Then the same argument following (\ref{Int:e6a}) shows that $\psi \equiv 0$ and $c = c' = 0$. That is, $w \equiv 0$. We thus prove the third equality in (\ref{Int:e4}).
\end{proof}

We now prove the main result of this section.
\begin{theorem}\label{thm:IntOper} Let $0 < \beta < 1/2$ and $0 \le \theta \le 1$. Let $w_l^c$ and $w_r^c$ be defined as in Theorem \ref{thm:Decomp}. Then for any $g \in H^1(0,1)$, the following integral equation
\begin{equation}\label{Int6:e1}
I^{\beta}_\theta \phi + \bigl (g(0) - (1-\theta){}_0^rI^{\beta}_1\phi \big)w_l^c + \big (g(1) - \theta({}_0^lI^{\beta}_1 \phi) \big )w_r^c = g
\end{equation}
has a unique solution $\phi \in H^{1-\beta}_0(0,1)$. Conversely, for any $\phi \in H^{1-\beta}_0(0,1)$
\begin{equation}\label{Int6:e1a}
g := I^{\beta}_\theta \phi - (1-\theta)\big({}_0^rI^{\beta}_1\phi\big) w_l^c - \theta\big({}_0^lI^{\beta}_1 \phi\big) w_r^c \in H^1_0(0,1).
\end{equation}
Furthermore, if $g \in H^1_0(0,1)$, then there exist positive constants $C_3$ and $C_4$ such that
\begin{equation}\label{Int6:e2}
C_3 \| \phi \|_{H^{1-\beta}(0,1)} \le \|g\|_{H^1(0,1)} \le C_4 \|\phi \|_{H^{1-\beta}(0,1)}.
\end{equation}
\end{theorem}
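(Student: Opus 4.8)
The plan is to recognize that the integral equation (\ref{Int6:e1}) is nothing but the equation $P(I^\beta_\theta\phi)=Pg$ for the projection $P$ of (\ref{Int:e2}), and then to read off existence, uniqueness and the two-sided norm bound from Theorems \ref{thm:Range} and \ref{thm:Decomp}. First I would record two elementary identities. Since $w_l^c(0)=1$, $w_l^c(1)=0$, $w_r^c(0)=0$, $w_r^c(1)=1$, and $w-Pw\in\mathcal N(P)=\mathrm{span}\{w_l^c,w_r^c\}$ with $Pw\in H^1_0(0,1)$, evaluating $w-Pw$ at the two endpoints gives the explicit formula $Pw=w-w(0)w_l^c-w(1)w_r^c$ for all $w\in H^1(0,1)$; in particular $P$ is a bounded operator on $H^1(0,1)$ because of the trace bound $|w(0)|,|w(1)|\le C\|w\|_{H^1(0,1)}$. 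Second, for $\phi\in H^{1-\beta}_0(0,1)\subset C[0,1]$ the function $I^\beta_\theta\phi$ lies in $H^1(0,1)\subset C[0,1]$ by Theorem \ref{thm:Range}, and from the definitions (\ref{Int:e0}), (\ref{Model:e2}) (the left integral vanishes at $x=0$, the right integral at $x=1$) one gets $(I^\beta_\theta\phi)(0)=(1-\theta)\,{}^r_0I^\beta_1\phi$ and $(I^\beta_\theta\phi)(1)=\theta\,{}^l_0I^\beta_1\phi$.

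Combining these, the operator $T\phi:=I^\beta_\theta\phi-(1-\theta)\big({}^r_0I^\beta_1\phi\big)w_l^c-\theta\big({}^l_0I^\beta_1\phi\big)w_r^c$ is precisely $P\circ I^\beta_\theta$ restricted to $H^{1-\beta}_0(0,1)$; hence $T$ maps $H^{1-\beta}_0(0,1)$ into $H^1_0(0,1)$, which is exactly the converse statement (\ref{Int6:e1a}). Adding $g(0)w_l^c+g(1)w_r^c$ to both sides shows that (\ref{Int6:e1}) is equivalent to $T\phi=g-g(0)w_l^c-g(1)w_r^c=Pg$. Now $T$ is bounded (composition of the bounded operator $I^\beta_\theta$ of Theorem \ref{thm:Range} with the bounded projection $P$); it is injective because $T\phi=0$ forces $I^\beta_\theta\phi\in\mathcal N(P)$, hence $I^\beta_\theta\phi\in\mathcal R(I^\beta_\theta)\cap\mathcal N(P)=\{0\}$ by the direct-sum decomposition in (\ref{Int:e4}), so $\phi=0$ by injectivity of $I^\beta_\theta$; and it is surjective onto $H^1_0(0,1)$ because $P(\mathcal R(I^\beta_\theta))=H^1_0(0,1)$ by (\ref{Int:e4}). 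Therefore, for every $g\in H^1(0,1)$ the element $Pg\in H^1_0(0,1)$ has a unique preimage $\phi\in H^{1-\beta}_0(0,1)$ under $T$, which gives existence and uniqueness for (\ref{Int6:e1}).

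For the norm equivalence (\ref{Int6:e2}) I would take $g\in H^1_0(0,1)$, so $Pg=g$ and $T\phi=g$. The upper bound is immediate from boundedness of $P$ and $I^\beta_\theta$: $\|g\|_{H^1(0,1)}=\|P I^\beta_\theta\phi\|_{H^1(0,1)}\le C\|I^\beta_\theta\phi\|_{H^1(0,1)}\le C_4\|\phi\|_{H^{1-\beta}(0,1)}$. For the lower bound, $T$ is a bounded bijection between the Banach spaces $H^{1-\beta}_0(0,1)$ and $H^1_0(0,1)$, so the bounded inverse (open mapping) theorem gives a bounded $T^{-1}$, whence $\|\phi\|_{H^{1-\beta}(0,1)}=\|T^{-1}g\|_{H^{1-\beta}(0,1)}\le C_3^{-1}\|g\|_{H^1(0,1)}$. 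Alternatively one can argue more quantitatively: writing $I^\beta_\theta\phi=g+(1-\theta)({}^r_0I^\beta_1\phi)w_l^c+\theta({}^l_0I^\beta_1\phi)w_r^c$ and using $D w_l^c=-1$, $D w_r^c=1$ together with $\int_0^1 Dg=0$ yields $\|D I^\beta_\theta\phi\|_{L^2(0,1)}^2=\|Dg\|_{L^2(0,1)}^2+\big(\theta\,{}^l_0I^\beta_1\phi-(1-\theta)\,{}^r_0I^\beta_1\phi\big)^2$, and then Lemma \ref{lem:dbeta} with the fractional Poincar\'e inequality gives $\|D I^\beta_\theta\phi\|_{L^2(0,1)}^2\ge\eta\|\phi\|_{H^{1-\beta}(0,1)}^2$; controlling the endpoint functionals by $\|g\|_{H^1(0,1)}$ again amounts to the positive angle between the closed subspaces $\mathcal R(I^\beta_\theta)$ and $\mathcal N(P)$ guaranteed by (\ref{Int:e4}).

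The main obstacle is really only the first paragraph — the bookkeeping that identifies the correction coefficients $(1-\theta)\,{}^r_0I^\beta_1\phi$ and $\theta\,{}^l_0I^\beta_1\phi$ in (\ref{Int6:e1}) with the boundary values of $I^\beta_\theta\phi$, and hence recognizes (\ref{Int6:e1}) as $P(I^\beta_\theta\phi)=Pg$. Once this reformulation is in place, the theorem is a corollary of the structural results in Theorems \ref{thm:Range} and \ref{thm:Decomp} together with the bounded inverse theorem, and no new estimate is needed.
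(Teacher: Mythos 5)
Your proposal is correct and follows essentially the same route as the paper: the paper likewise obtains existence and uniqueness from the decomposition $H^1(0,1)=\mathcal{R}(I^{\beta}_\theta)\oplus\mathcal{N}(P)$ of Theorem \ref{thm:Decomp} (determining the constants by evaluating $I^{\beta}_\theta\phi$ at the endpoints), and proves the left inequality of (\ref{Int6:e2}) by showing $PI^{\beta}_\theta$ is a bounded bijection from $H^{1-\beta}_0(0,1)$ onto $H^1_0(0,1)$ and invoking Banach's bounded inverse theorem. Your only organizational difference is to recognize the equation as $P(I^{\beta}_\theta\phi)=Pg$ from the outset and to get injectivity from $\mathcal{R}(I^{\beta}_\theta)\cap\mathcal{N}(P)=\{0\}$ rather than re-invoking Theorem \ref{thm:Constant} directly, which is an equivalent use of the same structural facts.
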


\begin{proof} For any given $g \in H^1(0,1)$, by (\ref{Int:e4}) in Theorem \ref{thm:Decomp}, there exist unique $\phi \in H^{1-\beta}_0(0,1)$ and constants $c$ and $c'$ such that
\begin{equation}\label{Int6:e3}
g = I^{\beta}_\theta \phi + c w_l^c + c'w_r^c.
\end{equation}
We use the definition (\ref{Int:e0}) of $I^{\beta}_\theta$ and note that ${}_0^lI^{\beta}_0 \phi = 0$ and ${}_1^rI^{\beta}_1 \phi = 0$ to obtain
$$ g(0) = (1-\theta) {}^r_0I^{\beta}_1 \phi  + c, \qquad g(1) = \theta {}^l_0I^{\beta}_1 \phi + c'. $$
We thus prove (\ref{Int6:e1}). By Theorem \ref{thm:Range}, $g$ defined in (\ref{Int6:e1a}) is in $H^1(0,1)$. It is clear that $g(0) = g(1) = 0$. This concludes the proof of (\ref{Int6:e1a}). 

To prove (\ref{Int6:e2}) we note that
$${}_0^rI^{\beta}_1 \phi = - \int^1_0 D({}_s^rI^{\beta}_1\phi)ds, \qquad {}_0^lI^{\beta}_1 \phi = \int^1_0 D({}_0^lI^{\beta}_s\phi)ds.$$
Hence
$$\theta \bigl |{}_0^lI^{\beta}_1 \phi \bigr| + (1-\theta) \bigl |{}_0^r I^{\beta}_1 \phi \bigr| \le C | \phi |_{J^{1-\beta,\theta}(0,1)}.$$
We combine this inequality with (\ref{Int6:e1a}) and Corollary \ref{cor:2C3} to arrive at
$$\begin{aligned}
\|g\|_{H^1(0,1)} &\leq C \big(\|D I^{\beta}_\theta\phi\|_{L^2(0,1)} + \theta \bigl |{}_0^l{D}^{1-\beta}_1 \phi \bigr| + (1-\theta) \bigl |{}_0^r D^{1-\beta}_1 \phi \bigr | \big)\\[0.025in]
& \leq C|\phi|_{J^{1-\beta,\theta}(0,1)} \leq C \|\phi \|_{H^{1-\beta}(0,1)}.
\end{aligned}$$
We thus prove the right inequality in (\ref{Int6:e2}).

To prove the left inequality in (\ref{Int6:e2}), let $P$ be the projection operator from $H^1(0,1)$ onto $H^1_0(0,1)$ as defined in (\ref{Int:e2}). Choosing $v = Pw$ in (\ref{Int:e2}) yields 
\begin{equation}\label{Int6:e4}
\| D(Pw) \|_{L^2(0,1)} \leq \|Dw \|_{L^2(0,1)}, \quad \forall w \in H^1(0,1).
\end{equation}
Although $P$ is not one-to-one from $H^1(0,1)$ to $H^1_0(0,1)$, we claim that $PI^{\beta}_\theta$ is a bounded linear bijection from $H^{1-\beta}_0(0,1)$ onto $H^1_0(0,1)$. As a matter of  fact, by Theorems \ref{thm:Range} and \ref{thm:Decomp} and equation (\ref{Int6:e4}), we conclude that $PI^{\beta}_\theta$ is a bounded linear operator from $H^{1-\beta}_0(0,1)$
onto $H^1_0(0,1)$. It remains to prove that $PI^{\beta}_\theta$ is injective. Suppose not, then there exists a $0 \neq \phi \in H^{1-\beta}_0(0,1)$ such that $PI^{\beta}_\theta\phi = 0$. Thus, 
$$0 = (D(PI^{\beta}_\theta\phi),Dv)_{L^2(0,1)} = (DI^{\beta}_\theta\phi,Dv)_{L^2(0,1)},
\qquad \forall v \in H^1_0(0,1).$$
By Theorem \ref{thm:Constant}, this problem has only the trivial solution $\phi = 0$. This concludes that $PI^{\beta}_\theta$ is injective and so is a bounded linear bijection from $H^{1-\beta}_0(0,1)$ onto $H^1_0(0,1)$. By Banach's bounded inverse theorem, $PI^{\beta}_\theta$ has a bounded inverse operator $(PI^{\beta}_\theta)^{-1}$ from $H^1_0(0,1)$ onto $H^{1-\beta}_0(0,1)$. Thus, for any $g \in H^1_0(0,1)$, there exists a unique $\phi \in H^{1-\beta}_0(0,1)$ such that
$$PI^{\beta}_\theta \phi = g$$
and vice versa. Furthermore,
$$\| \phi \|_{H^{1-\beta}(0,1)} \leq C \|g\|_{H^1(0,1)}.$$
We thus prove the left inequality in (\ref{Int6:e2}) and so the theorem.
\end{proof}

\section{A Petrov-Galerkin formulation for constant-coefficient problems}

Lemma \ref{lem:Example} shows that the Galerkin formulation (\ref{Prv:e1}) may lose its coercivity in the context of variable-coefficient FDEs. Numerical evidence also indicated the illposedness of the Galerkin formulation \cite{WanYanZhu}. We note that the governing equation (\ref{Model:e1}) is obtained by incorporating the fractional Fick's law into a canonical conservation law. This motivates the following Petrov-Galerkin weak formulation for problem (\ref{Model:e1}) with $0 < \beta < 1/2$: Given $f \in H^{-1}(0,1)$, seek $u \in H^{1-\beta}_0(0,1)$ such that
\begin{equation}\label{PG:e1}
A(u,v) := \bigl (KDI^{\beta}_\theta u,Dv \big )_{L^2(0,1)} = \langle f,v \rangle, \qquad \forall v \in H^1_0(0,1).
\end{equation}

In this section we study the wellposedness of the weak formulation (\ref{PG:e1}) for problem (\ref{Model:e1}) with $K \equiv 1$ and the characterization of its solution.
\begin{theorem}\label{thm:PGC} For problem (\ref{Model:e1}) with $0 < \beta < 1/2$,  $0 \leq \theta \leq 1$, and $K \equiv 1$, the bilinear form $A(\cdot,\cdot)$ is bounded and weakly coercive on the space $H^{1-\beta}_0(0,1) \times H^1_0(0,1)$
\begin{equation}\label{PGC:e1}\begin{array}{l}
\ds \inf_{w \in H^{1-\beta}_0(0,1)\setminus\{0\}} \sup_{v \in H^1_0(0,1)\setminus\{0\}}\frac{A(w,v)}{\|w\|_{H^{1-\beta}(0,1)}\|v\|_{H^1(0,1)}} \ge \kappa,\\[0.2in]
\ds \sup_{w \in H^{1-\beta}_0(0,1)} A(w,v) > 0, \qquad \forall\ v \in H^1_0(0,1)\setminus\{0\}
\end{array}\end{equation}
for a positive constant $\kappa=\kappa(\beta) >0$. Hence, the Petrov-Galerkin formulation (\ref{PG:e1}) has a unique solution $u \in H^{1-\beta}_0(0,1)$ for which 
\begin{equation}\label{PGC:e2}
\|u\|_{H^{1-\beta}(0,1)} \leq \frac{1}{\kappa}\|f\|_{H^{-1}(0,1)}.
\end{equation}
\end{theorem}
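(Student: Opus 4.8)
The plan is to verify the hypotheses of the generalized Lax--Milgram (Babu\v{s}ka--Ne\v{c}as) theorem for $A(\cdot,\cdot)$ on $X := H^{1-\beta}_0(0,1)$ and $Y := H^1_0(0,1)$ --- namely boundedness, the inf--sup bound (first line of (\ref{PGC:e1})), and the transposed nondegeneracy condition (second line) --- and then to read off existence, uniqueness, and the stability estimate (\ref{PGC:e2}). Throughout I would use that $0 < \beta < 1/2$ forces $1-\beta > 1/2$, so that Lemma \ref{lem:2C1}, Corollary \ref{cor:2C3}, and Lemma \ref{lem:dbeta} are all available on $X$.

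Boundedness is immediate: Cauchy--Schwarz gives $A(w,v) \le \|DI^{\beta}_\theta w\|_{L^2(0,1)}\|Dv\|_{L^2(0,1)}$, the upper bound in Lemma \ref{lem:dbeta} combined with Corollary \ref{cor:2C3} yields $\|DI^{\beta}_\theta w\|_{L^2(0,1)} \le C\|w\|_{H^{1-\beta}(0,1)}$, and $\|Dv\|_{L^2(0,1)} \le \|v\|_{H^1(0,1)}$. For the inf--sup bound the idea is to pick, for each $w \in X$, the explicit test function $v := PI^{\beta}_\theta w \in Y$, where $P$ is the Dirichlet projection of (\ref{Int:e2}). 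Using $PI^{\beta}_\theta w$ itself as the admissible test function in the identity defining $P$ turns the bilinear form into a square:
$$A(w, PI^{\beta}_\theta w) = \big(DI^{\beta}_\theta w, D(PI^{\beta}_\theta w)\big)_{L^2(0,1)} = \big\|D(PI^{\beta}_\theta w)\big\|^2_{L^2(0,1)}.$$
Then the Poincar\'e inequality on $Y$ (Lemma \ref{lem:2C1} with $\mu = 1$) bounds this below by $c\|PI^{\beta}_\theta w\|^2_{H^1(0,1)}$, and the lower bound in (\ref{Int6:e2}) of Theorem \ref{thm:IntOper} --- which says precisely that $PI^{\beta}_\theta\colon X \to Y$ is a bijection with $\|PI^{\beta}_\theta w\|_{H^1(0,1)} \ge C_3\|w\|_{H^{1-\beta}(0,1)}$ --- lets me divide through by $\|w\|_{H^{1-\beta}(0,1)}\|PI^{\beta}_\theta w\|_{H^1(0,1)}$ (nonzero by injectivity of $PI^{\beta}_\theta$ when $w \ne 0$) to obtain the first line of (\ref{PGC:e1}) with $\kappa = cC_3$.

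The transposed condition is similar and short: given $v \in Y\setminus\{0\}$, surjectivity of $PI^{\beta}_\theta$ (Theorem \ref{thm:IntOper}) supplies $w \in X$ with $PI^{\beta}_\theta w = v$, and again the defining identity of $P$ gives $A(w,v) = (D(PI^{\beta}_\theta w), Dv)_{L^2(0,1)} = \|Dv\|^2_{L^2(0,1)} > 0$. With boundedness, the inf--sup bound with constant $\kappa$, and this nondegeneracy in hand, the Babu\v{s}ka--Ne\v{c}as theorem yields, for every $f \in H^{-1}(0,1) = Y^*$, a unique $u \in X$ solving (\ref{PG:e1}), together with $\|u\|_{H^{1-\beta}(0,1)} \le \kappa^{-1}\|f\|_{H^{-1}(0,1)}$.

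I do not expect a real obstacle here, because the substantive work --- that $I^{\beta}_\theta$ has closed range in $H^1(0,1)$ and that composing with $P$ produces a norm-equivalent bijection onto $H^1_0(0,1)$ --- has already been carried out in Theorems \ref{thm:Range}--\ref{thm:IntOper}. The only point needing a little care is the step $A(w, PI^{\beta}_\theta w) = \|D(PI^{\beta}_\theta w)\|^2_{L^2(0,1)}$: it hinges on $PI^{\beta}_\theta w$ being a legitimate test function in (\ref{Int:e2}) (it is, since it lies in $H^1_0(0,1)$) and on recognizing the lower inequality in (\ref{Int6:e2}) as exactly the inverse-boundedness of $PI^{\beta}_\theta$ that closes the inf--sup estimate; everything else is Cauchy--Schwarz together with the norm equivalences already recorded in Sections 2 and 4.
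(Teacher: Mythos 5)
Your proposal is correct and follows essentially the same route as the paper: the test function $v = PI^{\beta}_\theta w$ you choose is exactly the $v$ of (\ref{PGC:e3}) (by Theorem \ref{thm:Decomp} the projection $P$ simply strips off the $w_l^c, w_r^c$ components of $I^{\beta}_\theta w$), the identity $A(w,v)=\|Dv\|^2_{L^2(0,1)}$ and the use of the Poincar\'e inequality together with the left inequality of (\ref{Int6:e2}) match the paper's (\ref{PGC:e4}), and the transposed condition and the appeal to the Babu\v{s}ka--Lax--Milgram theorem are identical. No gaps.
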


\begin{proof} For each $w \in H^{1-\beta}_0(0,1)$, by Theorem \ref{thm:IntOper} there exists a unique $v \in H^1_0(0,1)$ such that
\begin{equation}\label{PGC:e3}
I^{\beta}_\theta w - (1-\theta)({}_0^rI^{\beta}_1 w)w_l^c - \theta({}_0^lI^{\beta}_1w)w_r^c = v
\end{equation}
where $w_l^c$ and $w_r^c$ are defined in Theorem \ref{thm:Decomp}. We differentiate this equation and apply (\ref{Int6:e2}) to obtain 
\begin{equation}\label{PGC:e4}\begin{aligned}
A(w,v) &\ds := \big(DI^{\beta}_\theta w,Dv \big)_{L^2(0,1)} = (Dv,Dv)_{L^2(0,1)} =\|Dv\|^2_{L^2(0,1)}\\
& \ds \geq \min \left \{\f1{2},\f1{2C_0} \right \}^2 \|v\|^2_{H^1(0,1)} \\
& \ds \ge C_3 \min \left \{\f1{2},\f1{2C_0} \right \}^2 \|v\|_{H^1(0,1)}\|w\|_{H^{1-\beta}(0,1)}.
\end{aligned}\end{equation}
We thus prove the first inequality in (\ref{PGC:e1}) with $\kappa = C_3 \min \{1/2,1/(2C_0) \}^2$.

For each $ v \in H^1_0(0,1) \setminus \{0\}$, it follows from Theorem \ref{thm:IntOper} that there exists a unique $w \in H^{1-\beta}_0(0,1)$ such that (\ref{PGC:e3}) holds. Then (\ref{PGC:e4}) shows that the second inequality in (\ref{PGC:e1}) holds. We apply Babu\u{s}ka-Lax-Milgram theorem \cite{Bab,XuZik} to finish the proof.
\end{proof}

The following corollary characterizes the weak solution of the Petrov-Galerkin formulation (\ref{PG:e1}) in terms of the weak solution to a Galerkin formulation of a canonical second-order diffusion equation.

\begin{corollary}\label{cor:Char_PGC} Assume that the conditions of Theorem \ref{thm:PGC} hold. Let $w_l^c$ and $w_r^c$ be defined as in Theorem \ref{thm:Decomp}. Then $u \in H^{1-\beta}_0(0,1)$ is the weak solution of the Petrov-Galerkin formulation (\ref{PG:e1}) if and only if $w \in H^1_0(0,1)$ defined by
\begin{equation}\label{PGC:e5}
I^{\beta}_\theta u - (1-\theta)({}_0^rI^{\beta}_1u)w_l^c - \theta({}_0^lI^{\beta}_1u)w_r^c = w
\end{equation}
is the weak solution of the Galerkin formulation
$$(Dw,Dv)_{L^2(0,1)} = \langle f,v \rangle, \quad  \forall v \in H^1_0(0,1).$$
\end{corollary}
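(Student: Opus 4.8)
The plan is to use the explicit change of unknown $u \mapsto w$ encoded in \eqref{PGC:e5}: by Theorem \ref{thm:IntOper}—precisely the statement \eqref{Int6:e1a} together with the bijectivity of $PI^{\beta}_\theta$ established in its proof—the map sending $u \in H^{1-\beta}_0(0,1)$ to the $w \in H^1_0(0,1)$ defined by \eqref{PGC:e5} (namely $w = PI^{\beta}_\theta u$) is a bijection of $H^{1-\beta}_0(0,1)$ onto $H^1_0(0,1)$. So the corollary will reduce to checking that, under this correspondence, the two bilinear forms agree on the test space $H^1_0(0,1)$.

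First I would rewrite \eqref{PGC:e5} as $I^{\beta}_\theta u = w + c\,w_l^c + c'\,w_r^c$ with $c := (1-\theta)\,{}_0^rI^{\beta}_1 u$ and $c' := \theta\,{}_0^lI^{\beta}_1 u$, and differentiate to get $D I^{\beta}_\theta u = Dw + c\,Dw_l^c + c'\,Dw_r^c$ in $L^2(0,1)$. Inserting this into $A(\cdot,\cdot)$ and using $K\equiv 1$, for every $v \in H^1_0(0,1)$
$$A(u,v) = \bigl(DI^{\beta}_\theta u, Dv\bigr)_{L^2(0,1)} = (Dw,Dv)_{L^2(0,1)} + c\,(Dw_l^c,Dv)_{L^2(0,1)} + c'\,(Dw_r^c,Dv)_{L^2(0,1)}.$$
By the defining property \eqref{Int:e5} of $w_l^c = 1-x$ and $w_r^c = x$—equivalently, $w_l^c,w_r^c\in\mathcal N(P)$—the last two terms vanish for all $v\in H^1_0(0,1)$, so $A(u,v) = (Dw,Dv)_{L^2(0,1)}$ for every $v\in H^1_0(0,1)$. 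This single identity is essentially the whole content of the corollary.

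The equivalence then follows at once. If $u$ solves \eqref{PG:e1}, the identity gives $(Dw,Dv)_{L^2(0,1)} = \langle f,v\rangle$ for all $v\in H^1_0(0,1)$, i.e.\ $w$ solves the stated Galerkin problem. Conversely, if $w\in H^1_0(0,1)$ solves the Galerkin problem, set $u := (PI^{\beta}_\theta)^{-1}w \in H^{1-\beta}_0(0,1)$; then $w$ is exactly the function attached to $u$ by \eqref{PGC:e5}, and the identity again yields $A(u,v) = (Dw,Dv)_{L^2(0,1)} = \langle f,v\rangle$ for all $v$, i.e.\ $u$ solves \eqref{PG:e1}. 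I would close by remarking that the Petrov--Galerkin problem has a unique solution by Theorem \ref{thm:PGC} and the Galerkin problem has a unique solution by Lax--Milgram via the fractional Poincar\'e inequality (Lemma \ref{lem:2C1} with $\mu=1$), so the correspondence $u\leftrightarrow w$ matches the two unique solutions.

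There is really no serious obstacle here: all the substantive work—that $w=PI^{\beta}_\theta u$ lies in $H^1_0(0,1)$ and that $PI^{\beta}_\theta$ is invertible—was already carried out in Theorems \ref{thm:Decomp} and \ref{thm:IntOper}. The only point requiring a moment's attention is bookkeeping: one must make sure the correction terms $c\,w_l^c + c'\,w_r^c$ that appear when passing from $I^{\beta}_\theta u$ to $w$ are precisely the modes annihilated by $H^1_0$-test functions, and this is automatic because $w$ is chosen in $H^1_0(0,1)$, forcing $I^{\beta}_\theta u - w \in \mathcal N(P) = \operatorname{span}\{w_l^c,w_r^c\}$.
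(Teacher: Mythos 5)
Your argument is correct and follows the same route as the paper: invoke Theorem \ref{thm:IntOper} to see that \eqref{PGC:e5} sets up a bijection between $u\in H^{1-\beta}_0(0,1)$ and $w\in H^1_0(0,1)$, and then observe that $A(u,v)=(Dw,Dv)_{L^2(0,1)}$ for all $v\in H^1_0(0,1)$ because the correction terms lie in $\mathcal N(P)$. You simply spell out the details that the paper's two-line proof leaves implicit.
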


\begin{proof} By Theorem \ref{thm:IntOper}, $u \in H^{1-\beta}_0(0,1)$ if and only if there is a functions $w \in H^1_0(0,1)$ such that (\ref{PGC:e5}) holds. The proof resides in the fact that for $K \equiv 1$, $A(u,v) = (Dw,Dv)_{L^2(0,1)}$ for any $v \in H^1_0(0,1)$.
\end{proof}

\begin{remark}
Despite that they are both formulated for a constant-coefficient analogue of problem (\ref{Model:e1}), the Petrov-Galerkin formulation (\ref{PG:e1}) is different from the Galerkin formulation (\ref{Prv:e1}) in that the latter is defined on the product space $H^{1-\f{\beta}2}_0(0,1) \times H^{1-\frac{\beta}2}_0(0,1)$ for any given $f \in H^{-(1-\frac{\beta}2)}(0,1)$ with $0 < \beta < 1$ while the former is defined on a different product space $H^{1-\beta}_0(0,1) \times H^1_0(0,1)$ for
a given $f \in H^{-1}(0,1)$ with $0 < \beta < 1/2$. Hence, Theorem \ref{thm:Constant} does not apply to the Petrov-Galerkin formulation (\ref{PG:e1}), to which Theorem \ref{thm:PGC} applies.
\end{remark}

\section{Wellposedness, regularity and characterization of weak solutions to variable-coefficient FDEs}

In this section we study the wellposedness of the Petrov-Galerkin formulation (\ref{PG:e1}) and the characterization of the corresponding weak solutions to problem (\ref{Model:e1}).

We begin by letting $w_l$, $w_r$, and $w_f$ be the weak solutions to the following Galerkin formulation for second-order diffusion equations:
\begin{equation}\label{S6:e1}\begin{aligned}
& (K Dw_l,Dv)_{L^2(0,1)} = 0 \ \ \forall \ v \in H^1_0(0,1), \quad w_l(0)=1, ~w_l(1) = 0;\\[0.025in]
& (K Dw_r,Dv)_{L^2(0,1)} = 0 \ \ \forall \ v \in H^1_0(0,1), \quad w_r(0)=0, ~w_r(1) = 1;\\[0.025in]
& (KDw_f,Dv)_{L^2(0,1)} = \langle f,v \rangle, \ \ \forall \ v \in H^1_0(0,1), \quad w_f(0) = w_f(1) = 0
\end{aligned}
\end{equation}
for a given $f \in H^{-1}(0,1)$. It is well known that these problems have unique solutions \cite{Eva}. The solutions $w_l$ and $w_r$ can be solved in closed form as follows
\begin{equation}\label{S6:e2}
w_l= \left (\int^1_0\frac{1}{K(s)}ds \right )^{-1} \int^1_x\frac{1}{K(s)}ds, \qquad w_r= \left (\int^1_0\frac{1}{K(s)}ds \right )^{-1}\int^x_0\frac{1}{K(s)}ds.
\end{equation}
It is clear that $0 \leq w_l, \ w_r \leq 1$ and $w_l+w_r \equiv 1$, and that $w_l$ and $w_r$ are variable extensions of $w_l^c$ and $w_r^c$ introduced in Theorem \ref{thm:Decomp}.

\begin{theorem}\label{thm:Char} Let $0 < \beta < 1/2$ and $0 \le \theta \le 1$. Then $u$ is a weak solution to the Petrov-Galerkin formulation (\ref{PG:e1}) if and only if $u$ satisfies the following integral equation
\begin{equation}\label{S6:e3}
I^{\beta}_\theta u - (1-\theta)\big({}_0^r I^{\beta}_1u \big)w_l - \theta \big({}_0^lI^{\beta}_1u \big )w_r = w_f.
\end{equation}
\end{theorem}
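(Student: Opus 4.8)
The plan is to mirror the proof of Corollary~\ref{cor:Char_PGC}, with the harmonic extensions $w_l^c,w_r^c$ replaced by the $K$-harmonic extensions $w_l,w_r$ of \eqref{S6:e1}--\eqref{S6:e2}. Concretely, given $u\in H^{1-\beta}_0(0,1)$, introduce the function
\[
 g := I^{\beta}_\theta u - (1-\theta)\bigl({}_0^rI^{\beta}_1 u\bigr) w_l - \theta\bigl({}_0^lI^{\beta}_1 u\bigr) w_r ,
\]
that is, the left-hand side of \eqref{S6:e3}. The two things to verify are that $g\in H^1_0(0,1)$ and that $g$ intertwines the Petrov--Galerkin form $A(\cdot,\cdot)$ with the second-order form $(KD\cdot,D\cdot)_{L^2(0,1)}$; once this is done the equivalence with \eqref{S6:e3} will follow from the classical unique solvability of the third problem in \eqref{S6:e1}.

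First I would show $g\in H^1_0(0,1)$. Since $0<\beta<1/2$ we have $H^{1-\beta}_0(0,1)\hookrightarrow C[0,1]$, so the numbers ${}_0^rI^{\beta}_1 u$ and ${}_0^lI^{\beta}_1 u$ are well defined, while $I^{\beta}_\theta u\in H^1(0,1)$ by Theorem~\ref{thm:Range} and $w_l,w_r\in H^1(0,1)$ by \eqref{S6:e2}; hence $g\in H^1(0,1)$. For the boundary values, the definition \eqref{Int:e0} of $I^{\beta}_\theta$ gives ${}_0^lI^{\beta}_x u\big|_{x=0}=0$ and ${}_x^rI^{\beta}_1 u\big|_{x=1}=0$, so $I^{\beta}_\theta u(0)=(1-\theta)\,{}_0^rI^{\beta}_1 u$ and $I^{\beta}_\theta u(1)=\theta\,{}_0^lI^{\beta}_1 u$; combining this with $w_l(0)=1$, $w_l(1)=0$, $w_r(0)=0$, $w_r(1)=1$ from \eqref{S6:e1} yields $g(0)=g(1)=0$.

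Next, for any $v\in H^1_0(0,1)$ I would expand $(KDg,Dv)_{L^2(0,1)}$ by linearity. The terms carrying $w_l$ and $w_r$ drop out because $(KDw_l,Dv)_{L^2(0,1)}=(KDw_r,Dv)_{L^2(0,1)}=0$ for all $v\in H^1_0(0,1)$ by the first two lines of \eqref{S6:e1}; hence $(KDg,Dv)_{L^2(0,1)}=(KDI^{\beta}_\theta u,Dv)_{L^2(0,1)}=A(u,v)$ for every $v\in H^1_0(0,1)$. Consequently $u$ solves \eqref{PG:e1} if and only if $g$ (which lies in $H^1_0(0,1)$ by the previous step) satisfies $(KDg,Dv)_{L^2(0,1)}=\langle f,v\rangle$ for all $v\in H^1_0(0,1)$. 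Since $K\ge K_m>0$, this bilinear form is coercive on $H^1_0(0,1)$ by the fractional Poincar\'e inequality, so that problem has $w_f$ as its unique weak solution; therefore $u$ solves \eqref{PG:e1} iff $g=w_f$, which is precisely \eqref{S6:e3}. The converse direction is immediate: if \eqref{S6:e3} holds then $g=w_f\in H^1_0(0,1)$ and the same computation gives $A(u,v)=(KDw_f,Dv)_{L^2(0,1)}=\langle f,v\rangle$ for all $v\in H^1_0(0,1)$.

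The only delicate point is the assertion $g\in H^1_0(0,1)$: the interior regularity $I^{\beta}_\theta u\in H^1(0,1)$ is handed to us by Theorem~\ref{thm:Range}, but the vanishing of $g$ at $x=0$ and $x=1$ rests on the exact cancellation between the (generally nonzero) endpoint values of $I^{\beta}_\theta u$ and the subtracted $K$-harmonic pieces $(1-\theta)({}_0^rI^{\beta}_1 u)w_l$ and $\theta({}_0^lI^{\beta}_1 u)w_r$. This correction of the inhomogeneous boundary behaviour of $I^{\beta}_\theta u$ is exactly what lets the two-sided variable-coefficient FDE be recast as a genuine second-order Dirichlet problem, and it is the heart of the argument; the remainder is a one-line computation together with the standard Lax--Milgram solvability of \eqref{S6:e1}.
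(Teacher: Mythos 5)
Your proposal is correct and follows essentially the same route as the paper: define $g$ as the left-hand side of (\ref{S6:e3}), verify $g\in H^1_0(0,1)$ via the endpoint cancellation $I^{\beta}_\theta u(0)=(1-\theta)\,{}_0^rI^{\beta}_1u$ and $I^{\beta}_\theta u(1)=\theta\,{}_0^lI^{\beta}_1u$ together with the boundary values of $w_l,w_r$, use the first two equations of (\ref{S6:e1}) to get $(KDg,Dv)=A(u,v)$, and invoke uniqueness of the weak solution to the second-order Dirichlet problem to identify $g$ with $w_f$. The only cosmetic difference is that you establish the identity once and read off both implications, whereas the paper argues the two directions separately.
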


\begin{proof} Suppose that $u$ satisfies (\ref{S6:e3}). Then we use equations (\ref{S6:e1}) to deduce 
$$\begin{aligned}
A(u,v) & = \big (KD(I^{\beta}_\theta u),Dv \big )_{L^2(0,1)} \\
& = (KDw_f,Dv)_{L^2(0,1)} + (1-\theta) \big({}_0^r I^{\beta}_1 u \big) (KDw_l,Dv)_{L^2(0,1)}\\[0.025in]
& \qquad + \theta \big({}_0^l I^{\beta}_1 u \big) (KDw_r,Dv)_{L^2(0,1)} \\[0.025in]
& = (KDw_f,Dv)_{L^2(0,1)} = \langle f,v\rangle, \quad \forall v \in H^1_0(0,1).
\end{aligned}$$
Thus, $u$ is a weak solution to the Petrov-Galerkin formulation (\ref{PG:e1}). Conversely, let $u$ be a weak solution to the Petrov-Galerkin formulation (\ref{PG:e1}). We define
\begin{equation}\label{S6:e4}
w := I^{\beta}_\theta u - (1-\theta)({}_0^rI^{\beta}_1u)w_l - \theta({}_0^lI^{\beta}_1u)w_r. \end{equation}
Theorem \ref{thm:Range} ensures that $w \in H^1(0,1)$. In addition, we have
$$w(0) = I^{\beta}_\theta u|_{x=0} - (1-\theta)({}_0^r I^{\beta}_1u) = (1-\theta)({}_0^r I^{\beta}_1u) - (1-\theta)({}_0^r I^{\beta}_1u) = 0.$$
Similarly, we have $w(1) = 0$. Thus, $w \in H^1_0(0,1)$. Furthermore,
$$\begin{aligned}
(KDw,Dv)_{L^2(0,1)} & = \big(KD(I^{\beta}_\theta u-(1-\theta)({}_0^rI^{\beta}_1u)w_l - \theta({}_0^lI^{\beta}_1u)w_r), Dv \big)_{L^2(0,1)}\\
& = \big(KD(I^{\beta}_\theta u),Dv\big)_{L^2(0,1)} = \langle f,v \rangle, \quad \forall \ v \in H^1_0(0,1).
\end{aligned}$$
In other words, both $w$ and $w_f$ are the solution to the same Galerkin formulation in (\ref{S6:e1}). By the uniqueness of the weak solution to the problem, we conclude that $w = w_f$. Thus, (\ref{S6:e4}) implies that (\ref{S6:e3}) holds.
\end{proof}

We are now in a position to study the existence and uniqueness of the weak solution to Petrov-Galerkin formulation (\ref{PG:e1}). We apply Theorem \ref{thm:IntOper} to $w_l - w_l^c, w_r - w_r^c \in H^1_0(0,1)$ to conclude that there exist unique $u_l$ and $u_r$  in $H^{1-\beta}_0(0,1)$ such that 
\begin{equation}\label{S6:e5}\left\{\begin{aligned}
& I^{\beta}_\theta u_l - (1-\theta)\big({}_0^r I^{\beta}_1 u_l \big)w_l^c  - \theta \big({}_0^l I^{\beta}_1 u_l \big)w_r^c  = w_l - w_l^c,\\
& I^{\beta}_\theta u_r - (1-\theta)\big({}_0^r I^{\beta}_1 u_r \big)w_l^c - \theta \big({}_0^lI^{\beta}_1u_r \big)w_r^c  = w_r - w_r^c.
\end{aligned}\right.\end{equation}
We note that $w_l^c$ and $w_r^c$ satisfy (\ref{Int:e5}) to obtain
\begin{equation}\label{S6:e5a}\begin{aligned}
&  (D I^{\beta}_\theta u_l, Dv)_{L^2(0,1)} = (Dw_l,Dv)_{L^2(0,1)}, \quad &\forall v \in H^1_0(0,1), \\
&  (D I^{\beta}_\theta u_r, Dv)_{L^2(0,1)} = (Dw_r,Dv)_{L^2(0,1)}, \ \ & \forall v \in H^1_0(0,1).
\end{aligned}\end{equation}
Namely, $u_l, u_r \in H^{1-\beta}_0(0,1)$ are two particular solutions to problem (\ref{PG:e1}) with $K \equiv 1$ and $f = -D^2w_l$ and $-D^2w_r$, respectively. The fact that $w_l + w_r \equiv 1$ implies that $u_l + u_r$ satisfies the Galerkin formulation (\ref{Prv:e1}) with $K \equiv 1$ and $f \equiv 0$. The existence and uniqueness of the weak solution to the Galerkin weak formulation (\ref{Prv:e1}) concludes that $u_l + u_r \equiv 0$. In particular, if $K$ is constant, then $w_l \equiv w_l^c$ and $w_r \equiv w_r^c$. So the right-hand sides of the preceding equations vanish, which implies $u_l = u_r \equiv 0$.

Introduce the following variable analogue to the space $\mathcal{U}$ in (\ref{Int:e5a})
$$\begin{aligned}
\mathcal{V} := & \bigl \{g \in H^1_0(0,1): \exists \phi \in H^{1-\beta}_0(0,1), ~ I^{\beta}_\theta \phi - (1-\theta) \big({}_0^r I^{\beta}_1 \phi \big) w_l - \theta \big ({}_0^l I^{\beta}_1 \phi \big)w_r = g \bigr \}.
\end{aligned}$$

\begin{theorem}\label{thm:V} The space $\mathcal{V} = H^1_0(0,1)$, provided the following condition holds
\begin{equation}\label{V:e1}
1 + \theta \big({}_0^l{I}^{\beta}_1 u_l \big) - (1-\theta) \big({}_0^rI^{\beta}_1u_l \big) \neq 0, \quad 0 < x < 1
\end{equation}
or equivalently
\begin{equation}\label{V:e1a} 
1 - \big({}_0^l{I}^{\beta}_1 u_r \big) + (1-\theta) \big ({}_0^rI^{\beta}_1u_r \big)  \neq 0, \quad 0 < x < 1.
\end{equation}
\end{theorem}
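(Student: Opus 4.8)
The inclusion $\mathcal{V}\subseteq H^1_0(0,1)$ is immediate from the definition, so the plan is to show that every $g\in H^1_0(0,1)$ lies in $\mathcal{V}$. It is convenient to record, for $\phi\in H^{1-\beta}_0(0,1)$, the two endpoint functionals $\ell_r(\phi):=(1-\theta)\,{}_0^rI^{\beta}_1\phi$ and $\ell_l(\phi):=\theta\,{}_0^lI^{\beta}_1\phi$, which are bounded linear functionals on $H^{1-\beta}_0(0,1)$ (this is exactly the estimate $\theta|{}_0^lI^{\beta}_1\phi|+(1-\theta)|{}_0^rI^{\beta}_1\phi|\le C|\phi|_{J^{1-\beta,\theta}(0,1)}$ used in the proof of Theorem \ref{thm:IntOper}), and to set $T[\phi]:=I^{\beta}_\theta\phi-\ell_r(\phi)w_l-\ell_l(\phi)w_r$ and $T^c[\phi]:=I^{\beta}_\theta\phi-\ell_r(\phi)w_l^c-\ell_l(\phi)w_r^c$, so that $\mathcal{V}=T\big(H^{1-\beta}_0(0,1)\big)$. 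Theorem \ref{thm:IntOper} (restricted to right-hand sides in $H^1_0(0,1)$, where the boundary terms $g(0),g(1)$ drop out of (\ref{Int6:e1})) says precisely that $T^c$ is a bounded linear bijection from $H^{1-\beta}_0(0,1)$ onto $H^1_0(0,1)$ with bounded inverse, by (\ref{Int6:e1}), (\ref{Int6:e1a}), (\ref{Int6:e2}); this constant-coefficient operator is the building block everything rests on.

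The first real step is an algebraic identity relating $T$ to $T^c$ through the particular solutions $u_l,u_r\in H^{1-\beta}_0(0,1)$ of (\ref{S6:e5}), which by construction satisfy $T^c[u_l]=w_l-w_l^c$ and $T^c[u_r]=w_r-w_r^c$. Expanding $T^c\big[\phi-\ell_r(\phi)u_l-\ell_l(\phi)u_r\big]$ by linearity (note $\ell_r(\phi),\ell_l(\phi)$ are scalars) and substituting these two relations, all the $w_l^c,w_r^c$ terms cancel and one is left with $I^{\beta}_\theta\phi-\ell_r(\phi)w_l-\ell_l(\phi)w_r=T[\phi]$. Hence $T[\phi]=T^c\big[\phi-\ell_r(\phi)u_l-\ell_l(\phi)u_r\big]$ for every $\phi\in H^{1-\beta}_0(0,1)$, and since $u_l+u_r\equiv 0$ (established just before the statement) this reduces to $T[\phi]=T^c\big[\phi-L(\phi)u_l\big]$ with the scalar functional $L:=\ell_r-\ell_l$.

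Now fix $g\in H^1_0(0,1)$ and put $\psi:=(T^c)^{-1}g\in H^{1-\beta}_0(0,1)$. By the identity just derived and the injectivity of $T^c$, solving $T[\phi]=g$ is equivalent to solving $\phi-L(\phi)u_l=\psi$ in $H^{1-\beta}_0(0,1)$. Writing $\phi=\psi+\mu u_l$ for a scalar $\mu$ and applying $L$ gives the scalar equation $\mu=L(\psi)+\mu L(u_l)$, i.e. $\mu\big(1-L(u_l)\big)=L(\psi)$. A direct computation shows $1-L(u_l)=1+\theta\big({}_0^lI^{\beta}_1u_l\big)-(1-\theta)\big({}_0^rI^{\beta}_1u_l\big)$, which is exactly the quantity assumed nonzero in (\ref{V:e1}); so $\mu:=L(\psi)/\big(1-L(u_l)\big)$ is well defined, $\phi:=\psi+\mu u_l\in H^{1-\beta}_0(0,1)$, and one checks back that $L(\phi)=\mu$, hence $\phi-L(\phi)u_l=\psi$ and therefore $T[\phi]=T^c[\psi]=g$. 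Thus $g\in\mathcal{V}$, and since $g$ was arbitrary, $\mathcal{V}=H^1_0(0,1)$. The equivalence of (\ref{V:e1}) and (\ref{V:e1a}) then follows by substituting $u_r=-u_l$ into $1-L(u_l)$.

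The only genuinely nontrivial point is recognizing that the obstruction to inverting $T$ collapses onto the one-dimensional subspace $\mathrm{span}\{u_l\}$ — equivalently, that after subtracting off the constant-coefficient model $T^c$ the discrepancy is governed by a single scalar — so that solvability is controlled by the single non-vanishing condition (\ref{V:e1}). The supporting pieces (the cancellation identity, boundedness of $\ell_l,\ell_r$, and checking that every manipulation stays inside $H^{1-\beta}_0(0,1)$) are routine given Theorem \ref{thm:IntOper}, the definitions (\ref{S6:e5}), and $u_l+u_r\equiv 0$. If one additionally wants a quantitative bound $\|\phi\|_{H^{1-\beta}(0,1)}\le C\|g\|_{H^1(0,1)}$, it drops out of (\ref{Int6:e2}) together with the boundedness of $L$, with $C$ depending on $\beta,\theta$ and on $|1-L(u_l)|^{-1}$.
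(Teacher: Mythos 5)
Your proof is correct and follows essentially the same route as the paper: both reduce the variable-coefficient map to the constant-coefficient bijection furnished by Theorem \ref{thm:IntOper} by subtracting multiples of the particular solutions $u_l,u_r$ from (\ref{S6:e5}), and both identify the nonvanishing condition (\ref{V:e1}) as exactly the solvability condition for the resulting finite-dimensional correction. The only difference is organizational: the paper carries two undetermined constants $c_l,c_r$ and solves a $2\times 2$ linear system whose determinant reduces to (\ref{V:e1}) via $u_r=-u_l$ at the end, whereas you invoke $u_r=-u_l$ up front to collapse everything to the single scalar equation $\mu\big(1-L(u_l)\big)=L(\psi)$ --- equivalent bookkeeping.
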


\begin{proof} We conclude from equation (\ref{Int6:e1}) in Theorem \ref{thm:IntOper} that for each $g \in H^1_0(0,1)$, there exists a unique $\phi \in H^{1-\beta}_0(0,1)$ such that
\begin{equation}\label{V:e1b}\begin{aligned}
I^{\beta}_\theta \phi &= g + (1-\theta) \big({}_0^r I^{\beta}_1 \phi \big) w_l^c + \theta \big ({}_0^l I^{\beta}_1 \phi)w_r^c\\
&= g + (1-\theta) c_l w_l + \theta c_r w_r - (1-\theta) c_l (w_l-w_l^c) - \theta c_r (w_r-w_r^c)\\
&\quad \quad +(1-\theta)({}_0^r I^{\beta}_1 \phi - c_l)w_l^c + \theta ({}_0^l I^{\beta}_1 \phi - c_r)w_r^c,
\end{aligned}\end{equation}
where $c_l$ and $c_r$ are constants to be determined. We then evaluate $I^{\beta}_\theta((1-\theta) c_l u_l + \theta c_r u_r)$ and replace the $I^{\beta}_\theta u_l$ and $I^{\beta}_\theta u_r$ in the expressions by the rest of the terms in the equations (\ref{S6:e5}) to obtain
\begin{equation}\label{V:e1c}\begin{array}{l}
\hspace{-0.05in} I^{\beta}_\theta((1-\theta) c_l u_l + \theta c_r u_r) \\ 
~= (1-\theta)c_l \bigl [  (1-\theta)({}_0^r I^{\beta}_1 u_l)w_l^c + \theta({}_0^l I^{\beta}_1 u_l)w_r^c + w_l - w_l^c \bigr]\\ [0.025in]
\quad + \theta c_r \bigl [ (1-\theta)({}_0^r I^{\beta}_1 u_r)w_l^c + \theta({}_0^lI^{\beta}_1u_r)w_r^c + w_r - w_r^c \bigr]\\[0.025in]
~=  (1-\theta) c_l (w_l - w_l^c) + \theta c_r (w_r - w_r^c)  \\[0.025in]
\quad + (1-\theta)\big ({}_0^r I^{\beta}_1 ( (1-\theta) c_l u_l + \theta c_r u_r) \big)  w_l^c + \theta \big ({}_0^l I^{\beta}_1 ( (1-\theta) c_l u_l + \theta c_r u_r) \big)  w_r^c.
\end{array}\end{equation}
We sum equations (\ref{V:e1b}) and (\ref{V:e1c}) and cancel the corresponding terms to get
\begin{equation}\label{V:e2}\begin{array}{l}
\hspace{-0.15in} I^{\beta}_\theta ( \phi+(1-\theta) c_l u_l + \theta c_r u_r ) \\[0.05in] 
=  g + (1-\theta) c_l w_l + \theta c_r w_r +  (1-\theta)\big({}_0^r I^{\beta}_1 (\phi +  (1-\theta) c_l u_l + \theta c_r u_r) - c_l \big)  w_l^c \\[0.05in]
~ + \theta \big({}_0^l I^{\beta}_1 ( \phi + (1-\theta) c_l u_l + \theta c_r u_r) - c_r \big)  w_r^c.
\end{array}\end{equation}
Comparing this equation with the definition of the space $\mathcal{V}$ we conclude that to eliminate the last two terms on the right-hand side we shall choose $c_l$ and $c_r$ to satisfy the equations
\begin{equation}\label{V:e3}
\left\{\begin{aligned}
& {}_0^r I^{\beta}_1 (\phi +  (1-\theta) c_l u_l + \theta c_r u_r) - c_l = 0,\\
& {}_0^l I^{\beta}_1 (\phi + (1-\theta) c_l u_l + \theta c_r u_r) - c_r = 0.
\end{aligned}\right. \end{equation}
Equivalently,
$$\ds \left [ \begin{array}{cc}
1 - (1-\theta) ({}_0^r I^{\beta}_1 u_l) & - \theta({}_0^r I^{\beta}_1 u_r) \\[0.075in]
-(1-\theta) ({}_0^l I^{\beta}_1 u_l)     &  1 - \theta ({}_0^l I^{\beta}_1 u_r)
\end{array} \right ]
\left [ \begin{array}{l}
c_l \\ [0.1in]
c_r
\end{array} \right ]
=\left [ \begin{array}{l}
{}_0^r I^{\beta}_1 \phi  \\ [0.075in]
{}_0^l I^{\beta}_1 \phi
\end{array} \right]. $$
This linear system has a unique solution for each given $\phi \in H^{1-\beta}_0(0,1)$ if and only if its coefficient matrix is nonsingular, i.e.,
$$\big(1-(1-\theta)({}_0^rI^{\beta}_1u_l)\big)\big(1-\theta({}_0^l I^{\beta}_1u_r)\big)- (1-\theta)\theta({}_0^r{I}^{\beta}_1u_r)({}_0^lI^{\beta}_1u_l) \neq 0. $$
We incorporate the condition $u_r = - u_l$ into this equation to arrive at (\ref{V:e1}).

Under the condition (\ref{V:e1}), we let $w := \phi + (1-\theta) c_l u_l + \theta c_r u_r$. Then equation (\ref{V:e3}) can be rewritten as
$$c_l = {}_0^r I^{\beta}_1 w, \qquad c_r = {}_0^lI^{\beta}_1 w$$
and (\ref{V:e2}) can be expressed in the form
$$I^{\beta}_\theta w = g + (1-\theta) ({}_0^r I^{\beta}_1 w)w_l + \theta ({}_0^lI^{\beta}_1 w) w_r.$$
In other words, for the given $g \in H^1_0(0,1)$ we have found a function $w \in H^{1-\beta}_0(0,1)$ such that the preceding equation holds. Hence, $g \in \mathcal{V}$. That is, $\mathcal{V}=H^1_0(0,1)$.
\end{proof}

\begin{theorem}\label{thm:Solv} Given $f \in H^{-1}(0,1)$, the Petrov-Galerkin formulation (\ref{PG:e1}) has at least one weak solution, provided that the condition (\ref{V:e1}) holds. Furthermore, the solution is unique if and only if condition (\ref{V:e1}) holds. 
\end{theorem}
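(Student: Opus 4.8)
The plan is to reduce the solvability of the Petrov--Galerkin formulation (\ref{PG:e1}) to the surjectivity and injectivity of the map $u \mapsto w_f$ given by the integral equation (\ref{S6:e3}) of Theorem \ref{thm:Char}, and then to invoke Theorem \ref{thm:V}. First I would recall from Theorem \ref{thm:Char} that $u \in H^{1-\beta}_0(0,1)$ solves (\ref{PG:e1}) \emph{if and only if} it satisfies
$$I^{\beta}_\theta u - (1-\theta)\big({}_0^r I^{\beta}_1 u \big) w_l - \theta \big({}_0^l I^{\beta}_1 u \big) w_r = w_f,$$
where $w_f \in H^1_0(0,1)$ is the (unique, by the Lax--Milgram theorem) weak solution of the third equation in (\ref{S6:e1}). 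Thus the existence part amounts to showing that $w_f$ lies in the range of the linear operator $T: H^{1-\beta}_0(0,1) \to H^1_0(0,1)$ defined by the left-hand side above; but by definition $\mathcal{R}(T) = \mathcal{V}$, and Theorem \ref{thm:V} asserts that $\mathcal{V} = H^1_0(0,1)$ precisely when (\ref{V:e1}) holds. Hence existence of at least one weak solution follows immediately once (\ref{V:e1}) is assumed.

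For uniqueness, I would argue that two weak solutions $u_1, u_2$ give, via Theorem \ref{thm:Char}, the same right-hand side $w_f$, so $u := u_1 - u_2 \in H^{1-\beta}_0(0,1)$ satisfies the homogeneous equation
$$I^{\beta}_\theta u - (1-\theta)\big({}_0^r I^{\beta}_1 u \big) w_l - \theta \big({}_0^l I^{\beta}_1 u \big) w_r = 0,$$
i.e. $Tu = 0$. So uniqueness is equivalent to injectivity of $T$. To see that injectivity of $T$ holds under (\ref{V:e1}) and fails without it, I would use the bookkeeping already set up in the proof of Theorem \ref{thm:V}: writing $c_l := {}_0^r I^{\beta}_1 u$, $c_r := {}_0^l I^{\beta}_1 u$ and applying ${}_0^r I^{\beta}_1$ and ${}_0^l I^{\beta}_1$ to the equation $Tu = 0$ (using ${}_0^r I^{\beta}_1(c_l w_l + c_r\,\text{-type terms})$ and the closed forms of $w_l, w_r$, together with the fact that $I^{\beta}_\theta$ restricted to the subtracted boundary corrections reproduces $u_l, u_r$) yields exactly the linear $2\times 2$ system whose matrix is the one displayed just before (\ref{V:e1}). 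When (\ref{V:e1}) holds this matrix is nonsingular, forcing $c_l = c_r = 0$, whence $I^{\beta}_\theta u = 0$ and Theorem \ref{thm:Range} gives $u = 0$. When (\ref{V:e1}) fails, the matrix is singular, so a nonzero $(c_l, c_r)$ in its kernel produces, via $u := (1-\theta) c_l u_l + \theta c_r u_r$ (consistently with the substitution $u_r = -u_l$), a nonzero element of $\ker T$, i.e. a nonzero homogeneous solution; this establishes the ``only if'' direction.

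The main obstacle I anticipate is the consistency check in the failure direction: one must verify that a kernel vector $(c_l, c_r)$ of the $2\times2$ matrix really does yield $Tu = 0$ for $u := (1-\theta) c_l u_l + \theta c_r u_r$, and in particular that the two scalars it produces, ${}_0^r I^{\beta}_1 u$ and ${}_0^l I^{\beta}_1 u$, coincide with $c_l$ and $c_r$ themselves (so that the construction closes up rather than merely solving a perturbed equation). This is essentially the same algebraic identity exploited in (\ref{V:e1c}) of the proof of Theorem \ref{thm:V}, now read in reverse, so it should go through by retracing those manipulations with $g = 0$ and $\phi = 0$; one also needs $u \neq 0$, which follows since $u_l$ and $u_r = -u_l$ are not both zero whenever $K$ is genuinely variable, and since the map $(c_l,c_r) \mapsto (1-\theta)c_l u_l + \theta c_r u_r$ has trivial kernel only in the constant-coefficient case where (\ref{V:e1}) is automatically satisfied. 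A careful statement of this last point may require a short remark distinguishing the constant- and variable-coefficient cases.
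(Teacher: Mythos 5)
Your existence argument is exactly the paper's: Theorem \ref{thm:Char} converts (\ref{PG:e1}) into the integral equation (\ref{S6:e3}) with right-hand side $w_f$, and Theorem \ref{thm:V} gives surjectivity of the operator you call $T$ under (\ref{V:e1}). The uniqueness half is headed the same way but has a genuine gap as written. Applying ${}_0^r I^{\beta}_1$ and ${}_0^l I^{\beta}_1$ to $Tu=0$ does \emph{not} by itself produce a closed $2\times 2$ system in $(c_l,c_r)=({}_0^r I^{\beta}_1 u,\ {}_0^l I^{\beta}_1 u)$: the unknown $u$ is a function, and two scalar identities cannot determine it. The indispensable intermediate step---which the paper makes the centerpiece of its uniqueness proof---is to show that \emph{every} solution of the homogeneous equation (\ref{Uniq:e1}) already lies in $\mathrm{span}\{u_l,u_r\}$. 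This is done by verifying that $\phi := u-(1-\theta)({}_0^r I^{\beta}_1 u)u_l - \theta({}_0^l I^{\beta}_1 u)u_r$ satisfies the \emph{constant-coefficient} homogeneous equation (\ref{Uniq:e3}) (with $w_l^c,w_r^c$ in place of $w_l,w_r$), to which Theorem \ref{thm:IntOper} with $g=0$ applies and forces $\phi\equiv 0$. You mention this machinery only as a ``consistency check in the failure direction,'' but it is equally essential for the ``if'' direction; without it, nonsingularity of the matrix says nothing about a general homogeneous solution.

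A second, smaller defect is your treatment of the kernel of $(c_l,c_r)\mapsto(1-\theta)c_l u_l+\theta c_r u_r$: since $u_l+u_r\equiv 0$, this map \emph{always} has a one-dimensional kernel (whenever $(1-\theta)c_l=\theta c_r$), not only in the constant-coefficient case, so your argument that the constructed homogeneous solution is nonzero does not go through as stated. The paper sidesteps the $2\times 2$ bookkeeping here entirely: using $u_r=-u_l$ it writes any homogeneous solution as $c_l u_l$ and derives the single scalar identity $\bigl[1+\theta({}_0^l I^{\beta}_1 u_l)-(1-\theta)({}_0^r I^{\beta}_1 u_l)\bigr](w_l-w_l^c)\,c_l=0$; since $w_l\not\equiv w_l^c$ (and hence $u_l\not\equiv 0$) for genuinely variable $K$, one gets $c_l=0$ exactly when (\ref{V:e1}) holds, which delivers both directions of the ``if and only if'' at once. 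Your outline becomes a proof once these two points are repaired along those lines.
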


\begin{proof} Assume that condition (\ref{V:e1}) holds. Let $w_f \in H^1_0(0,1)$ be the weak solution to the third equation in (\ref{S6:e1}). By Theorem \ref{thm:V}, there exists at least one $u \in H^{1-\beta}_0(0,1)$ such that
$$I^{\beta}_\theta u - (1-\theta) ({}_0^r I^{\beta}_1 u) w_l - \theta({}_0^l I^{\beta}_1 u)w_r = w_f.$$
Theorem \ref{thm:Char} concludes that $u$ is a weak solution to the Petrov-Galerkin formulation (\ref{PG:e1}).

The solution to the Petrov-Galerkin formulation (\ref{PG:e1}) is unique if and only if the corresponding homogeneous formulation has only the trivial solution. By Theorem \ref{thm:Char}, this is equivalent to that the following integral equation
\begin{equation}\label{Uniq:e1}
I^{\beta}_\theta u = (1-\theta)({}_0^r I^{\beta}_1 u)w_l + \theta({}_0^lI^{\beta}_1u)w_r
\end{equation}
has only the trivial solution.

We claim that any solution $u$ to the integral equation (\ref{Uniq:e1}) can be expressed as a linear combination of $u_l$ and $u_r$ introduced in (\ref{S6:e5}). In fact, (\ref{Uniq:e1}) can be rewritten as
\begin{equation}\label{Uniq:e2}\begin{aligned}
I^{\beta}_\theta u & =(1-\theta)({}_0^r I^{\beta}_1 u)(w_l-w_l^c) + \theta({}_0^l I^{\beta}_1 u)(w_r-w_r^c)\\[0.025in]
& ~+(1-\theta)({}_0^r I^{\beta}_1 u)w_l^c + \theta( {}_0^lI^{\beta}_1 u) w_r^c.
\end{aligned}\end{equation}
We use (\ref{S6:e5}) and (\ref{Uniq:e2}) to find that
$$\begin{aligned}
& I^{\beta}_\theta (u-(1-\theta)({}_0^r I^{\beta}_1 u)u_l - \theta({}_0^lI^{\beta}_1 u)u_r) \\
& \quad = I^{\beta}_\theta u-(1-\theta)({}_0^r I^{\beta}_1 u) I^{\beta}_\theta u_l - \theta({}_0^l I^{\beta}_1 u) I^{\beta}_\theta u_r \\
& \quad = (1-\theta)({}_0^r I^{\beta}_1 u)(w_l-w_l^c) + \theta({}_0^l I^{\beta}_1 u)(w_r-w_r^c)\\[0.025in]
& \qquad + (1-\theta)({}_0^r I^{\beta}_1 u)w_l^c + \theta( {}_0^lI^{\beta}_1 u) w_r^c\\
&\qquad - (1-\theta)({}_0^r I^{\beta}_1 u) [(1-\theta)({}_0^r I^{\beta}_1 u_l)w_l^c + \theta({}_0^l I^{\beta}_1 u_l)w_r^c + w_l - w_l^c ] \\
&\qquad - \theta({}_0^l I^{\beta}_1 u) [(1-\theta) ({}_0^r I^{\beta}_1 u_r)w_l^c + \theta({}_0^lI^{\beta}_1u_r)w_r^c + w_r - w_r^c]\\
&\quad = (1-\theta)({}_0^r I^{\beta}_1 u)w_l^c + \theta( {}_0^lI^{\beta}_1 u) w_r^c\\
&\qquad - (1-\theta)({}_0^r I^{\beta}_1 u) [(1-\theta)({}_0^r I^{\beta}_1 u_l)w_l^c + \theta({}_0^l I^{\beta}_1 u_l)w_r^c ] \\
&\qquad - \theta({}_0^l I^{\beta}_1 u) [(1-\theta) ({}_0^r I^{\beta}_1 u_r)w_l^c + \theta({}_0^lI^{\beta}_1u_r)w_r^c ]\\
&\quad =  (1-\theta) \big[\ {}_0^r I^{\beta}_1 \big(u-(1-\theta)({}_0^r I^{\beta}_1 u)u_l - \theta({}_0^l I^{\beta}_1 u)u_r \big) \big] w_l^c \\
&\qquad +  \theta \big[{}_0^l I^{\beta}_1 \big( u-(1-\theta)({}_0^r I^{\beta}_1 u)u_l - \theta({}_0^l I^{\beta}_1 u)u_r \big) \big] w_r^c.
\end{aligned}$$
In other words, $\phi := u-(1-\theta)({}_0^r I^{\beta}_1 u)u_l - \theta({}_0^l I^{\beta}_1 u)u_r$
satisfies the homogeneous integral equation
\begin{equation}\label{Uniq:e3}
I^{\beta}_\theta \phi  - (1-\theta)({}_0^rI^{\beta}_1\phi)w_l^c - \theta({}_0^lI^{\beta}_1 \phi)w_r^c = 0.
\end{equation}
By Theorem \ref{thm:IntOper}, i.e., (\ref{Int6:e1a}) with $g=0$, this equation has only the trivial solution $\phi \equiv 0$. That is,
$$u = (1-\theta) ({}_0^r I^{\beta}_1 u) u_l + \theta({}_0^l I^{\beta}_1 u)u_r.$$
Thus, we have proved the claim.

We are now in a position to prove that (\ref{Uniq:e1}) has only the trivial solution. Let $u$ be any linear combination of $u_l$ and $u_r$. Recall that $u_l + u_r \equiv 0$, we have $u = c_l u_l$ with $c_l$ being an undetermined constant. We prove that $c_l$ must be zero under the condition (\ref{V:e1}). As a matter of fact, equations (\ref{S6:e5}) and (\ref{Uniq:e2}) respectively reduce to
$$I^{\beta}_\theta u = c_l  [ (1-\theta)({}_0^r I^{\beta}_1 u_l)w_l^c  + \theta({}_0^l I^{\beta}_1 u_l)w_r^c + w_l - w_l^c]$$
and
$$\begin{aligned}
I^{\beta}_\theta u & =c_l\big[(1-\theta)({}_0^r I^{\beta}_1 u_l)(w_l-w_l^c)
+ \theta({}_0^l I^{\beta}_1 u_l)(w_r-w_r^c)\\[0.025in]
& \quad +(1-\theta)({}_0^r I^{\beta}_1 u_l)w_l^c + \theta( {}_0^lI^{\beta}_1 u_l) w_r^c\big].
\end{aligned}$$
Subtracting the first equation from the second yields
$$ c_l\big[(1-\theta)({}_0^r I^{\beta}_1 u_l)(w_l-w_l^c)
+ \theta({}_0^l I^{\beta}_1 u_l)(w_r-w_r^c)\big] = c_l(w_l - w_l^c).$$
We utilize the fact that $w_r+w_l=w^c_r+w^c_l=1$ to rewrite this equation as
$$\bigl [1 + \theta({}_0^l I^{\beta}_1 u_l) - (1-\theta)({}_0^r I^{\beta}_1 u_l) \bigr] (w_l-w_l^c)c_l
= 0, \qquad 0 \leq x \leq 1.$$
This equation has only the trivial solution $(w_l-w_l^c)c_l = 0$ if and only if condition (\ref{V:e1}) holds. As we are considering the case of a variable diffusivity coefficient $K$, $w_l \not\equiv w_l^c$. Otherwise, it is clear from (\ref{S6:e2}) and the fact that $w_l^c = x$ that $K$ would be a constant. Hence,
$c_l = 0$ if and only if condition (\ref{V:e1}) holds.
\end{proof}

\begin{theorem}\label{thm:PG} Consider problem (\ref{Model:e1}) with $0 < \beta < 1/2$, $0 \le \theta \le 1$, and that $K \in L^\infty(0,1)$. In addition,  assume that condition (\ref{V:e1}) holds. Then the bilinear form $A(\cdot,\cdot)$ in (\ref{PG:e1}) is bounded and weakly coercive on the space $H^{1-\beta}_0(0,1) \times H^1_0(0,1)$. That is, the inequalities in (\ref{PGC:e1}) hold. Then, for any $f \in H^{-1}(0,1)$, the Petrov-Galerkin formulation (\ref{PG:e1}) has a unique solution $u \in H^{1-\beta}_0(0,1)$ such that the stability estimate (\ref{PGC:e2}) holds.
\end{theorem}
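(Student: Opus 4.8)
The plan is to follow the structure of the proof of Theorem~\ref{thm:PGC}, replacing the elementary identities available for $K\equiv 1$ by the structural results Theorems~\ref{thm:Char}, \ref{thm:V} and~\ref{thm:Solv}; the coefficient $K$ will enter only through its bounds $K_m\le K\le K_M$ and through the use of the $K$-weighted functions $w_l,w_r$ of~(\ref{S6:e1})--(\ref{S6:e2}) in place of the flat $w_l^c,w_r^c$. Boundedness of $A$ is immediate: since $K\in L^\infty(0,1)$ and Theorem~\ref{thm:Range} gives $\|DI^{\beta}_\theta w\|_{L^2(0,1)}\le C\|w\|_{H^{1-\beta}(0,1)}$, Cauchy--Schwarz yields $|A(w,v)|\le K_M\|DI^{\beta}_\theta w\|_{L^2(0,1)}\|Dv\|_{L^2(0,1)}\le C\|w\|_{H^{1-\beta}(0,1)}\|v\|_{H^1(0,1)}$.

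Next I would introduce the linear operator $L\colon H^{1-\beta}_0(0,1)\to H^1(0,1)$ defined by
$$Lw:=I^{\beta}_\theta w-(1-\theta)\big({}_0^rI^{\beta}_1 w\big)w_l-\theta\big({}_0^lI^{\beta}_1 w\big)w_r,$$
and establish that it is a bounded linear bijection onto $H^1_0(0,1)$. The boundary-value computation in the proof of Theorem~\ref{thm:Char} shows $Lw\in H^1_0(0,1)$; $L$ is bounded since $\|I^{\beta}_\theta w\|_{H^1(0,1)}\le C\|w\|_{H^{1-\beta}(0,1)}$ by Theorem~\ref{thm:Range}, the trace terms $(1-\theta)\,{}_0^rI^{\beta}_1 w$ and $\theta\,{}_0^lI^{\beta}_1 w$ are bounded by $C\|w\|_{H^{1-\beta}(0,1)}$ as in the proof of Theorem~\ref{thm:IntOper}, and $w_l,w_r$ are fixed functions in $H^1(0,1)$. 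Surjectivity onto $H^1_0(0,1)$ is precisely the statement $\mathcal{V}=H^1_0(0,1)$ of Theorem~\ref{thm:V}, which holds under hypothesis~(\ref{V:e1}); injectivity is the uniqueness analysis in the proof of Theorem~\ref{thm:Solv}, since $Lw=0$ is exactly equation~(\ref{Uniq:e1}) with $u=w$, which under~(\ref{V:e1}) has only the trivial solution. Banach's bounded inverse theorem then supplies a constant $C_5>0$ with $\|w\|_{H^{1-\beta}(0,1)}\le C_5\|Lw\|_{H^1(0,1)}$ for all $w\in H^{1-\beta}_0(0,1)$.

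With $L$ in hand, I would establish weak coercivity as follows. Given $w\in H^{1-\beta}_0(0,1)\setminus\{0\}$, take $v:=Lw\in H^1_0(0,1)$. Differentiating the definition of $L$ gives $DI^{\beta}_\theta w=Dv+(1-\theta)({}_0^rI^{\beta}_1 w)Dw_l+\theta({}_0^lI^{\beta}_1 w)Dw_r$, so
$$A(w,v)=(KDI^{\beta}_\theta w,Dv)_{L^2(0,1)}=(KDv,Dv)_{L^2(0,1)},$$
the two cross terms vanishing because $w_l$ and $w_r$ solve the $K$-weighted homogeneous problems in~(\ref{S6:e1}) and $v\in H^1_0(0,1)$; this orthogonality is exactly why the variable extensions $w_l,w_r$, rather than $w_l^c,w_r^c$, must be used here. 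Then $K\ge K_m$ and the fractional Poincar\'e inequality (Lemma~\ref{lem:2C1}) give, as in~(\ref{PGC:e4}),
$$A(w,v)\ge K_m\|Dv\|_{L^2(0,1)}^2\ge K_m\min\Big\{\f12,\f1{2C_0}\Big\}^2\|v\|_{H^1(0,1)}^2,$$
and combining with $\|v\|_{H^1(0,1)}\ge C_5^{-1}\|w\|_{H^{1-\beta}(0,1)}$ produces the first inequality in~(\ref{PGC:e1}) with $\kappa:=(K_m/C_5)\min\{1/2,1/(2C_0)\}^2$. For the transposed condition I would, given $v\in H^1_0(0,1)\setminus\{0\}$, take $w:=L^{-1}v$, so that $A(w,v)=(KDv,Dv)_{L^2(0,1)}\ge K_m\|Dv\|_{L^2(0,1)}^2>0$. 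The Babu\u{s}ka--Lax--Milgram theorem \cite{Bab,XuZik} then delivers the unique $u\in H^{1-\beta}_0(0,1)$ and the stability bound~(\ref{PGC:e2}).

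The hard part is not any single estimate but the careful assembly: verifying that $L$ genuinely lands in $H^1_0(0,1)$ and is bounded, and that its injectivity coincides with the uniqueness statement established under~(\ref{V:e1}) in Theorem~\ref{thm:Solv}. Once that identification is made, the argument is a transcription of the $K\equiv 1$ proof of Theorem~\ref{thm:PGC} with $K_m,K_M$ inserted and $w_l^c,w_r^c$ replaced by $w_l,w_r$.
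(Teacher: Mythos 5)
Your proof is correct and takes essentially the same approach as the paper: your operator $L$ is precisely the paper's composition $P_K I^{\beta}_\theta$ (the $K$-weighted elliptic projection applied to $I^{\beta}_\theta w$), and the paper likewise pairs $w$ with $v=Lw$, uses the $K$-orthogonality of $w_l,w_r$ against $H^1_0(0,1)$ to reduce $A(w,v)$ to $(KDv,Dv)_{L^2(0,1)}\ge K_m\|Dv\|^2_{L^2(0,1)}$, and concludes via the Babu\u{s}ka--Lax--Milgram theorem. Your explicit identification of surjectivity of $L$ with Theorem~\ref{thm:V} and of injectivity with the uniqueness analysis in Theorem~\ref{thm:Solv} simply spells out what the paper compresses into ``similarly to the proof in Theorem~\ref{thm:IntOper}.''
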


\begin{proof} It is clear that the bilinear form $A(\cdot,\cdot)$ is bounded on the space $H^{1-\beta}_0(0,1) \times H^1_0(0,1)$. To prove its weak coercivity, 
for any given $v \in H^1_0(0,1)\setminus\{0\}$, by Theorem \ref{thm:Solv} there exists a unique $w \in H^{1-\beta}_0 (0,1)$ such that
\begin{equation}\label{PG:e2}
v = I^{\beta}_\theta w - (1-\theta)({}_0^rI^{\beta}_1 w )w_l - \theta ({}_0^lI^{\beta}_1 w) w_r.
\end{equation}
A direct evaluation reveals
\begin{equation}\label{PG:e3}\begin{aligned}
A(w,v) &= (KD(I^{\beta}_\theta w),Dv)_{L^2(0,1)} \\
& = (KDv,Dv)_{L^2(0,1)} + (1-\theta)({}_0^rI^{\beta}_1w ) (KDw_l,Dv)_{L^2(0,1)} \\
& \quad + \theta ({}_0^lI^{\beta}_1w ) (KDw_r,Dv)_{L^2(0,1)} \\
& = (KDv,Dv)_{L^2(0,1)} \ge K_m \| Dv\|_{L^2(0,1)}^2\\
& \ge \kappa \|v\|_{H^1(0,1)} \|w\|_{H^{1-\beta}_0(0,1)}
\end{aligned}\end{equation}
similarly to the derivation of (\ref{PGC:e4}). Thus, the first estimate in (\ref{PGC:e1}) holds.

To prove the second estimate in (\ref{PGC:e1}), let $P_K$ be the projection operator from $H^1(0,1)$ onto $H^1_0(0,1)$ defined as follows: for any $ w \in H^1(0,1)$, find $P_K w \in H^1_0(0,1)$ such that
$$(KD(P_K w),Dv)_{L^2(0,1)} = (KDw,Dv)_{L^2(0,1)}, \quad \forall\ v \in H^1_0(0,1).$$
Clearly we have
$$\|D(P_K w) \|_{L^2(0,1)} \leq \sqrt{\frac{K_M}{K_m}}\|Dw\|_{L^2(0,1)}, \quad \forall\ w \in H^1(0,1).$$

Similarly to the proof in Theorem \ref{thm:IntOper}, we can prove that $P_K I^{\beta}_\theta$ is a bounded linear bijection from $H^{1-\beta}_0(0,1)$ onto $H^1_0(0,1)$ and has a bounded inverse, provided that condition (\ref{V:e1}) holds. Thus, for any $v \in H^1_0(0,1)$, there exists a unique $w \in H^{1-\beta}_0(0,1)$ which satisfies (\ref{PG:e2}) and
$$ \| w \|_{H^{1-\beta}(0,1)} \leq C \| v \|_{H^1(0,1)}.$$
We get from (\ref{PG:e3}) that
$$A(w,v) = (KDv,Dv)_{L^2(0,1)} \ge K_m \| Dv\|^2_{L^2(0,1)} \ge \kappa \| w \|_{H^{1-\beta}(0,1)} \| v \|_{H^1(0,1)}.$$
We thus prove (\ref{PG:e3}). The rest of the theorem is a direct application of Babu\u{s}ka-Lax-Milgram theorem \cite{Bab,XuZik}.
\end{proof}

\begin{theorem}\label{thm:Regularity} Assume that the conditions in Theorem \ref{thm:PG} hold. Furthermore, we assume that 
$K \in C^{m+1}[0,1]$ and $f \in H^m(0,1)$ with $m$ being a nonnegative integer, then the following regularity estimates hold for the weak solution $u \in H^{1-\beta}_0(0,1)$ to the Petrov-Galerkin formulation (\ref{PG:e1})
\begin{equation}\label{Reg:e1}
\big \| I^\beta_\theta u \big \|_{H^{m+2}(0,1)} \leq C \big (\| K\|_{C^{m+1}[0,1]} + \|f\|_{H^m(0,1)} \big).
\end{equation}
In other words, the weighted high-order Riemann-Liouville derivatives $D^{m+2} I^\beta_\theta u$ exist and can be bounded by the high-order norms of the data of the variable-coefficient FDE (\ref{Model:e1}).
\end{theorem}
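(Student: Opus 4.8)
The plan is to derive the regularity estimate directly from the characterization in Theorem~\ref{thm:Char}, which reduces the high-order Riemann--Liouville regularity of $u$ to the classical regularity of three solutions of second-order two-point boundary-value problems. Indeed, by Theorem~\ref{thm:Char} the weak solution $u \in H^{1-\beta}_0(0,1)$ of (\ref{PG:e1}) satisfies
\begin{equation*}
I^\beta_\theta u = w_f + (1-\theta)\big({}_0^r I^\beta_1 u\big)\, w_l + \theta\big({}_0^l I^\beta_1 u\big)\, w_r ,
\end{equation*}
and here $c_l := (1-\theta)({}_0^r I^\beta_1 u)$ and $c_r := \theta({}_0^l I^\beta_1 u)$ are just real numbers. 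So it suffices to bound $|c_l|$, $|c_r|$, $\|w_f\|_{H^{m+2}(0,1)}$, $\|w_l\|_{H^{m+2}(0,1)}$ and $\|w_r\|_{H^{m+2}(0,1)}$ separately and recombine by the triangle inequality.

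First I would bound the constants. Since $1-\beta > 1/2$ we have $H^{1-\beta}_0(0,1) \hookrightarrow C[0,1]$, while $\int_0^1 s^{\beta-1}\,ds = \int_0^1 (1-s)^{\beta-1}\,ds = \beta^{-1} < \infty$; hence $|c_l| + |c_r| \le C\,\|u\|_{H^{1-\beta}(0,1)}$, and the stability estimate (\ref{PGC:e2}) of Theorem~\ref{thm:PG} together with $\|f\|_{H^{-1}(0,1)} \le \|f\|_{L^2(0,1)} \le \|f\|_{H^m(0,1)}$ yields $|c_l| + |c_r| \le C\,\|f\|_{H^m(0,1)}$. Next I would treat $w_l$ and $w_r$ through their closed forms (\ref{S6:e2}): because $K \in C^{m+1}[0,1]$ and $K \ge K_m > 0$ we have $1/K \in C^{m+1}[0,1]$ and $\big(\int_0^1 K^{-1}\big)^{-1} \le K_M$, so differentiating (\ref{S6:e2}) once shows $Dw_l, Dw_r \in C^{m+1}[0,1]$, whence $w_l, w_r \in C^{m+2}[0,1] \hookrightarrow H^{m+2}(0,1)$ with norms controlled by $K_m$ and $\|K\|_{C^{m+1}[0,1]}$.

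For $w_f$, the solution of $-D(K D w_f) = f$ with $w_f(0)=w_f(1)=0$, I would integrate once to get $K D w_f = c - F$ with $F(x) := \int_0^x f$; the constraint $w_f(1)=0$ forces $c = \big(\int_0^1 K^{-1}\big)^{-1}\int_0^1 K^{-1}F$, so $|c| \le C\,\|f\|_{L^2(0,1)}$. Since $f \in H^m(0,1)$ implies $F \in H^{m+1}(0,1)$, and multiplication of an $H^{m+1}(0,1)$ function by a $C^{m+1}[0,1]$ function stays in $H^{m+1}(0,1)$ (by the Leibniz rule each term $D^j(1/K)\,D^{m+1-j}(c-F)$ is a bounded function times an $L^2$ function), we get $Dw_f = (c-F)/K \in H^{m+1}(0,1)$, i.e.\ $w_f \in H^{m+2}(0,1)$ with $\|w_f\|_{H^{m+2}(0,1)} \le C\,\|f\|_{H^m(0,1)}$, the constant depending on $K_m$, $K_M$ and $\|K\|_{C^{m+1}[0,1]}$. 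Substituting the three estimates into the displayed identity gives (\ref{Reg:e1}); and since then $D^{m+2} I^\beta_\theta u \in L^2(0,1)$, the concluding sentence of the theorem follows at once.

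In effect, all the heavy lifting was already done in Theorems~\ref{thm:Char}, \ref{thm:Solv} and \ref{thm:PG}, and what is left is essentially bookkeeping. The one place that needs genuine (though completely standard) care is the bootstrap for $w_f$: one must integrate the equation in divergence form and verify that multiplying by the coefficient $1/K$ does not cost Sobolev regularity — which is exactly why the hypothesis is $K \in C^{m+1}[0,1]$ rather than merely $K \in H^{m+1}(0,1)$. I would also keep explicit track of how the various constants depend on $K$, so that the additive form $C\big(\|K\|_{C^{m+1}[0,1]} + \|f\|_{H^m(0,1)}\big)$ in (\ref{Reg:e1}) is recovered cleanly.
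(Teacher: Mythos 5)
Your proposal is correct and follows essentially the same route as the paper: both start from the characterization $I^\beta_\theta u = w_f + (1-\theta)({}_0^r I^\beta_1 u)w_l + \theta({}_0^l I^\beta_1 u)w_r$ of Theorem \ref{thm:Char}, bound the two scalar coefficients by $\|u\|_{H^{1-\beta}(0,1)}\le C\|f\|_{H^{-1}(0,1)}$ via the Sobolev embedding and the stability estimate (\ref{PGC:e2}), control $w_l,w_r$ through the closed forms (\ref{S6:e2}), and invoke $H^{m+2}$ regularity of $w_f$. The only difference is that you derive the elliptic regularity of $w_f$ by explicitly integrating the divergence-form ODE rather than citing the classical theory, which is a harmless elaboration.
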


\begin{proof} Let $u$ be the weak solution to the Petrov-Galerkin formulation (\ref{PG:e1}). By Theorem \ref{thm:Char}, $u$ satisfies (\ref{S6:e3}). Under the conditions of the theorem, it follows from the classical theory of second-order diffusion equations \cite{Eva} that $w_f \in H^{m+2}(0,1) \cap H^1_0(0,1)$ satisfies
$$\| w_f\|_{H^{m+2}(0,1)} \leq C \| f \|_{H^m(0,1)}.$$
A direct differentiation of (\ref{S6:e2}) concludes that there is a positive constant $C$ such that 
$$\|w_l\|_{H^{m+2}(0,1)} + \|w_r\|_{H^{m+2}(0,1)} \le C \|K\|_{C^{m+1}[0,1]}.$$

We differentiate (\ref{S6:e3}) successively for $k$ times with $0 \le k \le m+2$ to obtain 
\begin{equation}\label{Reg:e2}\begin{array}{rl}
D^k I^{\beta}_\theta u &= (1-\theta)({}_0^r I^{\beta}_1u)D^k w_l
+\theta({}_0^lI^{\beta}_1u)D^k w_r + D^k w_f.
\end{array}\end{equation}
We apply Young's inequality and Sobolev's embedding theorem \cite{AdaFou,ErvRoo05} and the estimate (\ref{PG:e2}) to deduce that
$$
\bigl | {}_0^r I^{\beta}_1u \big | + \big | {}_0^lI^{\beta}_1u \big | \le C(\beta) \| u\|_{L^\infty(0,1)} \le C \|u\|_{H^{1-\beta}(0,1)} \le C\| f\|_{H^{-1}(0,1)}. 
$$
We thus come up with the following bounds for $2 \le k \le m+2$
$$\begin{array}{l}
\|D^k I^{\beta}_\theta u\|_{L^2(0,1)} \\[0.05in]
\qquad \leq C \bigl ( (1-\theta) \big |{}_0^r I^{\beta}_1 u \big | \|D^k w_l\|_{C^{k-1}[0,1]}
+ \theta \big |{}_0^lI^{\beta}_1u \big | \| w_r \|_{C^{k-1}[0,1]} + \|w_f\|_{H^k(0,1)} \big )\\[0.05in]
\qquad \le  C \bigl (\| K\|_{C^{m-1}[0,1]} + \|f\|_{H^{k-2}(0,1)} \big).
\end{array}$$
We thus finish the proof of the theorem.
\end{proof}

\section{Application of the theory}

In \S 6 we proved the existence, uniqueness, regularity and characterization of the weak solution to the homogeneous Dirichlet boundary-value problem of two-sided variable-coefficient conservative FDEs (\ref{Model:e1}), the weak coercivity and stability of the corresponding Petrov-Galerkin formulation (\ref{PG:e1}), under the condition that (\ref{V:e1}) holds.

While (\ref{V:e1}) looks somewhat nonconventional, in this section we look at some important special cases of (\ref{Model:e1}) for which (\ref{V:e1}) holds.

\begin{theorem}\label{thm:Oneside} The conclusions of Theorems \ref{thm:Solv}--\ref{thm:Regularity} hold for the one-sided analogue of problem (\ref{Model:e1}).
\end{theorem}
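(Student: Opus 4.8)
The one-sided analogue of (\ref{Model:e1}) corresponds to $\theta=1$ or $\theta=0$; the reflection $x\mapsto 1-x$, which interchanges ${}_0^lI^\beta_x$ with ${}_x^rI^\beta_1$, $w_l$ with $w_r$, $w_l^c$ with $w_r^c$, and replaces $K(x)$ by $K(1-x)$, reduces the case $\theta=0$ to the case $\theta=1$, so my plan is to treat $\theta=1$ only. By Theorems \ref{thm:Solv}, \ref{thm:PG} and \ref{thm:Regularity}, every assertion in the statement holds as soon as the single hypothesis (\ref{V:e1}) is verified; for $\theta=1$ that hypothesis collapses to $1+{}_0^lI^\beta_1u_l\neq 0$ — equivalently, the coefficient matrix appearing in the proof of Theorem \ref{thm:V} is now upper triangular with diagonal entries $1$ and $1-{}_0^lI^\beta_1u_r=1+{}_0^lI^\beta_1u_l$, hence nonsingular precisely when this scalar is nonzero. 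The whole proof thus reduces to determining the sign of the number $a:={}_0^lI^\beta_1u_l$.

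First I would use the defining equation (\ref{S6:e5}) for $u_l$: with $\theta=1$ and $w_r^c=x$ it collapses to ${}_0^lI^\beta_xu_l=(w_l-w_l^c)+a\,x$, whose unique solution in $H^{1-\beta}_0(0,1)$ is $u_l$ (Theorem \ref{thm:IntOper}, which needs no side condition, so $a$ is a genuine, well-defined number). Write $h:=(w_l-w_l^c)+a\,x$. Since $(w_l-w_l^c)(0)=w_l(0)-w_l^c(0)=0$, we have $h(0)=0$, so inverting ${}_0^lI^\beta_x$ produces no $x^{-\beta}$ boundary term; the Caputo and Riemann--Liouville derivatives of $h$ agree and $u_l={}_0^lI^{1-\beta}_xh'$ with $h'=w_l'+1+a$, i.e.
$$u_l(x)=\f1{\Gamma(1-\beta)}\int_0^x\f{w_l'(s)+1+a}{(x-s)^\beta}\,ds.$$
Because $\beta<1/2$, $H^{1-\beta}_0(0,1)\hookrightarrow C[0,1]$, so $u_l(1)=0$ holds pointwise; imposing it and using $\int_0^1(1-s)^{-\beta}\,ds=1/(1-\beta)$ yields
$$\f{1+a}{1-\beta}=-\int_0^1\f{w_l'(s)}{(1-s)^\beta}\,ds.$$
Finally, the closed form (\ref{S6:e2}) gives $w_l'(s)=-\bigl(\int_0^1K^{-1}\bigr)^{-1}K(s)^{-1}<0$, so the right-hand side is strictly positive. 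Hence $1+a>0$, condition (\ref{V:e1}) holds, and Theorems \ref{thm:Solv}, \ref{thm:PG} and \ref{thm:Regularity} transfer verbatim to the one-sided problem.

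The difficulty here is not conceptual but lies entirely in the fractional-calculus bookkeeping behind the explicit inversion. One must confirm that $h$ is absolutely continuous — true because $K\ge K_m>0$ forces $w_l'\in L^\infty(0,1)$ — and that $h(0)=0$, so that ${}_0^lI^\beta_xu_l=h$ indeed gives $u_l={}_0^lI^{1-\beta}_xh'$ rather than a formula carrying an $x^{-\beta}$ singularity; and one must check that ${}_0^lI^{1-\beta}_x$ applied to the bounded function $w_l'+1+a$ extends continuously up to $x=1$ (it is $(1-\beta)$-H\"older there), so that the endpoint evaluation determining $a$ is legitimate. For completeness I would also record that $\theta=0$ can be handled by the same computation directly rather than by reflection: there (\ref{V:e1}) reads $1-{}_0^rI^\beta_1u_l\neq 0$, and inverting ${}_x^rI^\beta_1u_l=(w_l-w_l^c)+({}_0^rI^\beta_1u_l)(1-x)$ under the boundary condition $u_l(0)=0$ gives $(1-{}_0^rI^\beta_1u_l)/(1-\beta)=-\int_0^1 s^{-\beta}w_l'(s)\,ds>0$.
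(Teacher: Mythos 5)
Your proposal is correct and follows essentially the same route as the paper's proof: reduce to $\theta=1$, note that (\ref{V:e1}) collapses to $1+{}_0^lI^\beta_1u_l\neq 0$, invert the Abel equation ${}_0^lI^\beta_xu_l=(w_l-w_l^c)+({}_0^lI^\beta_1u_l)\,x$ via Lemmas \ref{lem:2C4} and \ref{lem:2C7} to get $u_l={}_0^lI^{1-\beta}_x(w_l'+1+{}_0^lI^\beta_1u_l)$, and evaluate at $x=1$ (legitimate since $1-\beta>1/2$ gives continuity) to identify $1+{}_0^lI^\beta_1u_l$ with a manifestly positive integral because $w_l'<0$. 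Your added remarks — the triangularity of the coefficient matrix at $\theta=1$ and the direct $\theta=0$ computation in place of the paper's appeal to symmetry — are correct refinements but do not change the argument.
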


\begin{proof} We need only to prove that (\ref{V:e1}) holds. By symmetry, we need only consider the case of $\theta=1$ when (\ref{V:e1}) reduces to
$$ 1+{}_0^lI^{\beta}_1u_l \neq 0, \quad 0 < x < 1. $$
On the other hand, the first equation in (\ref{S6:e5}) reduces to
$$\begin{aligned}
{}_0^l I^{\beta}_x u_l &= w_l - w_l^c +({}_0^lI^{\beta}_1u_l)w_r^c = w_l - 1 + \big (1 + {}_0^lI^{\beta}_1u_l \big)x \\
&= - w_r + \big (1+{}_0^lI^{\beta}_1u_l \big)x.
\end{aligned}$$
We apply  $D ~{}_0^lI^{1-\beta}_x$ on both sides of the equation and use Lemmas \ref{lem:2C4} and \ref{lem:2C7} as well as (\ref{S6:e2}) to obtain
\begin{equation}\label{Appl:e1}\begin{array}{rl}
u_l & = - D ~{}_0^lI^{1-\beta}_x w_r + \bigl ( 1+({}_0^lI^{\beta}_1u_l) \bigr ) \bigl (D ~{}_0^lI^{1-\beta}_x x \bigr) \\[0.05in]
& = - {}_0^lI^{1-\beta}_x Dw_r + \bigl (1+({}_0^lI^{\beta}_1u_l) \bigr) \bigl ({}_0^lI^{1-\beta}_x 1 \bigr)\\[0.05in]
& \ds = -\f1{\Gamma(1-\beta)} \int_0^x \left [ \f1{(x-s)^\beta}\left (\int_0^1 \f{1}{K(\theta)} d\theta \right )^{-1} \f1{K(s)} \right ] ds \\[0.15in]
& \ds \qquad + \bigl (1+({}_0^lI^{\beta}_1u_l) \bigr) \f1{\Gamma(1-\beta)} \int_0^x \f1{(x-s)^\beta}ds.
\end{array}\end{equation}
We let $s = xt$ to get  
\begin{equation}\label{Appl:e2}
\begin{aligned}\ds \f1{\Gamma(1-\beta)} \int_0^x \f1{(x-s)^\beta}ds &= \f{x^{1-\beta}}{\Gamma(1-\beta)} \int_0^1 \f1{(1-t)^\beta}dt \\
& = \f{x^{1-\beta}}{\Gamma(1-\beta)} B(1,1-\beta) =  \f{x^{1-\beta}}{\Gamma(2-\beta)}\end{aligned}\end{equation}
with $B(\cdot,\cdot)$ being the Beta function. We note from (\ref{S6:e5}) that $u_l \in H^{1-\beta}_0(0,1)$ and let $x=1$ in (\ref{Appl:e1}) and (\ref{Appl:e2}) to obtain
$$\ds 1+{}_0^lI^{\beta}_1u_l = \Gamma(1-\beta) \int_0^x \left [ \f1{(x-s)^\beta}\left (\int_0^1 \f{1}{K(\theta)} d\theta \right )^{-1}
\f1{K(s)} \right] ds  > 0.$$
Thus, (\ref{V:e1}) holds for problem (\ref{Model:e1}) with $\theta = 1$. We can similarly prove that (\ref{V:e1}) holds for problem (\ref{Model:e1}) with $\theta = 0$.
\end{proof}

The following theorem shows that problem (\ref{Model:e1}) is well posed if the diffusivity coefficient $K$ is a perturbation of a constant.
\begin{theorem} \label{thm:Perturb} If the diffusivity coefficient $K$ in (\ref{Model:e1}) is a perturbation from a constant in the $L^2$ sense, i.e., there exists a constant $\varepsilon_0  > 0$ for which 
\begin{equation}\label{Perturb:e1}
\Big \| \Big(\int^1_0\frac{1}{K(s)}ds \Big)^{-1} \frac{1}{K} -1 \Big \|_{L^2(0,1)} < \varepsilon_0,
\end{equation}
then the conclusions of Theorems \ref{thm:Solv}--\ref{thm:Regularity} hold.
\end{theorem}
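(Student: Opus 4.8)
The plan is to reduce everything to verifying the single scalar condition (\ref{V:e1}); once (\ref{V:e1}) is established, Theorems \ref{thm:Solv}--\ref{thm:Regularity} apply verbatim and give all the stated conclusions. The guiding observation is that when $K$ is constant we have $w_l \equiv w_l^c$ and $w_r \equiv w_r^c$ by (\ref{S6:e2}), so the right-hand sides of (\ref{S6:e5a}) vanish, $u_l = u_r \equiv 0$, and the left-hand side of (\ref{V:e1}) equals exactly $1$. The idea is therefore a continuity/closeness argument: if $K$ is a small $L^2$-perturbation of a constant, then $u_l$ stays small in $H^{1-\beta}_0(0,1)$, hence so do ${}_0^lI^{\beta}_1u_l$ and ${}_0^rI^{\beta}_1u_l$, and the left-hand side of (\ref{V:e1}) remains bounded away from $0$.

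To make this quantitative I would first use (\ref{S6:e2}) to compute $D(w_l-w_l^c) = -\big[(\int_0^1 K(s)^{-1}ds)^{-1}K^{-1}-1\big]$, so that (\ref{Perturb:e1}) gives $\|D(w_l-w_l^c)\|_{L^2(0,1)} < \varepsilon_0$. Since $w_l^c$ satisfies (\ref{Int:e5}), the first equation of (\ref{S6:e5a}) reads
$$\big(DI^{\beta}_\theta u_l, Dv\big)_{L^2(0,1)} = \big(D(w_l-w_l^c),Dv\big)_{L^2(0,1)}, \qquad \forall\, v \in H^1_0(0,1),$$
i.e. $u_l$ is the (unique) weak solution of the constant-coefficient Petrov--Galerkin formulation (\ref{PG:e1}) with $K\equiv 1$ and a right-hand side $f$ satisfying $\|f\|_{H^{-1}(0,1)} \le \|D(w_l-w_l^c)\|_{L^2(0,1)} < \varepsilon_0$. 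The stability estimate (\ref{PGC:e2}) of Theorem \ref{thm:PGC} then yields $\|u_l\|_{H^{1-\beta}(0,1)} \le \varepsilon_0/\kappa$, and, exactly as in the proof of Theorem \ref{thm:Regularity}, Sobolev embedding gives $|{}_0^lI^{\beta}_1u_l| + |{}_0^rI^{\beta}_1u_l| \le C(\beta)\|u_l\|_{H^{1-\beta}(0,1)} \le C\varepsilon_0/\kappa$. Consequently
$$\big| 1 + \theta({}_0^lI^{\beta}_1u_l) - (1-\theta)({}_0^rI^{\beta}_1u_l) - 1 \big| \le \frac{C\varepsilon_0}{\kappa},$$
so choosing $\varepsilon_0$ small enough (e.g. $\varepsilon_0 < \kappa/(2C)$) forces $1 + \theta({}_0^lI^{\beta}_1u_l) - (1-\theta)({}_0^rI^{\beta}_1u_l) \in [1/2,3/2]$, hence nonzero. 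This verifies (\ref{V:e1}) (equivalently (\ref{V:e1a})), and Theorems \ref{thm:Solv}--\ref{thm:Regularity} then apply.

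I do not expect a serious obstacle here; the only real point is to recognize that (\ref{S6:e5a}) identifies $u_l$ as the solution of the already-analyzed constant-coefficient problem, so that its $H^{1-\beta}$ norm is controlled linearly by the perturbation size $\varepsilon_0$ via (\ref{PGC:e2}). The one place to be slightly careful is ensuring that the constant $C$ in $|{}_0^lI^{\beta}_1u_l| + |{}_0^rI^{\beta}_1u_l| \le C\|u_l\|_{H^{1-\beta}(0,1)}$ depends only on $\beta$ (it does, being the same embedding constant used in Theorem \ref{thm:Regularity}) and that $\kappa$ in (\ref{PGC:e2}) is the fixed constant $\kappa(\beta)$ of Theorem \ref{thm:PGC}, so that the threshold on $\varepsilon_0$ can be fixed a priori in terms of $\beta$ and $\theta$ alone, independently of the particular $K$.
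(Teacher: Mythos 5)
Your proposal is correct and follows essentially the same route as the paper: both reduce the theorem to verifying condition (\ref{V:e1}), identify $u_l$ (the paper works with $u_r=-u_l$, which is the same up to sign since $D(w_l-w_l^c)=-D(w_r-w_r^c)$) as the solution of the constant-coefficient Petrov--Galerkin problem with a right-hand side of $H^{-1}$ norm at most $\varepsilon_0$, apply the stability estimate (\ref{PGC:e2}) and Sobolev embedding to bound $|{}_0^lI^{\beta}_1u_l|+|{}_0^rI^{\beta}_1u_l|$, and then choose $\varepsilon_0$ small enough that the left side of (\ref{V:e1}) stays away from zero. The only cosmetic difference is that the paper phrases the smallness of the right-hand side via the $L^2$ projection of $Dw_r$ onto constants, which is equivalent to your direct computation of $D(w_l-w_l^c)$.
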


\begin{proof} We notice that $Dw_r = \big(\int^1_0\frac{1}{K(s)}ds \big)^{-1} \frac{1}{K} $ satisfies that $\int_0^1 Dw_r dx = 1$. Hence, the minimizer $P^*_0$ of the approximation 
$$\ds P_0^* = \mathrm{argmin}_{P_0 \in \mathbb{R}}  \big \| Dw_r - P_0 \big \|_{L^2(0,1)} $$
must be given by its first Fourier coefficient or $L^2$ projection to the space of constant $P^*_0 = \int_0^1 Dw_r 1 dx = 1$. Furthermore, $(1, Dv)_{L^2(0,1)}=0$ for any $ v \in H^1_0(0,1)$. Hence, the second equation in (\ref{S6:e5a}) still holds when $Dw_r$ in its right-hand side is replaced by $Dw_r - 1$ for any constant $C$. By the stability estimate (\ref{PGC:e2}) for Theorem \ref{thm:PG} there exists a constant $C_4 > 0$ such that
$$\ds \big \| u_r \big \|_{H^{1-\beta}(0,1)} \leq C_4 \Big \| \Big(\int^1_0\frac{1}{K} dx \Big)^{-1} \frac{1}{K} -1 \Big \|_{L^2(0,1)}. $$
Furthermore, we notice that 
$$\begin{aligned}
&\bigl | \theta \big({}_0^l{I}^{\beta}_1 u_l \big) - (1-\theta) \big({}_0^rI^{\beta}_1u_l \big) \big | \\
&\quad \ds = \f1{\Gamma(\beta)} \Bigl | \theta \int_0^1 \frac{u_l(s)}{(1-s)^{1-\beta}} ds - (1-\theta) \int_0^1 \frac{u_l(s)}{s^{1-\beta}} ds \Big | \\
&\quad \ds \le \f1{\Gamma(\beta+1)} \| u_l \|_{L^\infty(0,1)} \le C_5 \| u_l \|_{H^{1-\beta}_0(0,1)}\end{aligned}$$
where we have used Sobolev's embedding theorem at the last step.

Consequently, we obtain
$$\begin{aligned}
&1 + \theta \big({}_0^l{I}^{\beta}_1 u_l \big) - (1-\theta) \big({}_0^rI^{\beta}_1u_l \big) \\
& \quad \ds \ge 1 - \big | \theta \big({}_0^l{I}^{\beta}_1 u_l \big) - (1-\theta) \big({}_0^rI^{\beta}_1u_l \big) \big | \\
& \quad \ds \ge 1 - C_5 \| u_l \|_{H^{1-\beta}_0(0,1)} = 1 - C_5 \| u_r \|_{H^{1-\beta}_0(0,1)}\\
& \quad \ds \ge 1 - C_4 C_5 \Big \| \Big(\int^1_0\frac{1}{K} dx \Big)^{-1} \frac{1}{K} -1 \Big \|_{L^2(0,1)}.\end{aligned}$$
We finish the proof of the theorem by selecting $\varepsilon_0=1/(C_4C_5)$. 
\end{proof}



\begin{thebibliography}{}

\bibitem{AdaFou} R.A.~Adams and J.J.F.~Fournier, {\it Sobolev Spaces}, Elsevier, San Diego, 2003.

\bibitem{App} D.~Applebaum, {\it L\'{e}vy Processes and Stochastic Calculus}, 2nd ed., Cambrige University Press, Cambridge, 2009.

\bibitem{Bab} I.~Babu\u{s}ka,  Error-bounds for finite element method, {\it Numer. Math.}, 16 (1971), 322--333.

\bibitem{BaoJia} W.~Bao, H.~Jian, N.J. Mauser, and Y.~Zhang, Dimension reduction of the Schr\"{o}dinger equation with Coulomb and anisotropic confining potentials, {\it SIAM J. Appl. Math.}, 73 (2013), 2100-2123.

\bibitem{BenSch} D.A.~Benson, R.~Schumer, M.M.~Meerschaert, and S.W.~Wheatcraft, Fractional dispersion, L\'{e}vy motion, and the made tracer tests, {\it Transp. Porous Media}, 42 (2001), 211--240.

\bibitem{BenWhe} D.~Benson, S.~W.~Wheatcraft, and M.~M.~Meerschaert, {\it The fractional-order governing equation of L\'{e}vy motion}, Water Resour. Res., 36 (2000), 1413--1423.

\bibitem{CafSil} L. Caffarelli and L. Silvestre, An extension problem related to fractional Laplacian, {\em Commun. Partial Diff. Eqn}, 3 (2007), 1245--1260.

\bibitem{CasCar} D. del-Castillo-Negrete, B.A. Carreras, and V. E. Lynch, Fractional diffusion in plasma turbulence, {\em Phys. Plasmas}, 11 (2004), 3854. 

\bibitem{CheGun} X. Chen and M. Gunzburger, Continuous and discontinuous finite element methods for a peridynamics model of mechanics, {\it Comput. Methods Appl. Mech. Engrg.}, 200 (2011) 1237--1250.

\bibitem{DelGun} M.~D' Elia and M.~Gunzburger, The fractional laplacian operator on bounded domains as a special case of the nonlocal diffusion operator, {\it Computers \& Math. Appl.}, 66 (2013), 1245--1260.

\bibitem{DuGun} Q. Du, M. Gunzburger, R. Lehoucq, and K. Zhou, Analysis and approximation of nonlocal diffusion problems with volume constraints, {\it SIAM Rev.}, 54 (2012), 667--696.

\bibitem{DuoZha} S.~Duo and Y.~Zhang, Computing the ground and first excited states of the fractional Schr\"{o}dinger equation in an infinite potential well, {\em Commun. Comput. Phys.}, 18 (2015), 321--350.

\bibitem{Ein} A.E.~Einstein, {\it Investigations on the Theory of the Brownian Movement}, translation, Dover, Mineola, New York, 1956.

\bibitem{ErvRoo05} V.J.~Ervin and J.P.~Roop, Variational formulation for the stationary fractional advection dispersion equation, {\em Numer. Methods. Partial Differential Eq}, 22(2005), 558--576.

\bibitem{Eva} L.C.~Evans, {\em Partial Differential Equations}, Graduate Studies in Mathematics, V 19, American Mathematical Society, Rhode Island, 1998.

\bibitem{Fic} A.E.~Fick, Poggendorff's Annel. Physik. (1855), 94, 59.

\bibitem{GaoDua1} T.~Gao, J.~Duan, and X.~Li, Fokker–Planck equations for stochastic dynamical systems with symmetric L\'{e}vy motions, {\it Appl. Math. Comp.}, 278 (2016), 1--20.

\bibitem{GaoDua2} T.~Gao, J.~Duan, X.~Li, and R.~Song, Mean exit time and escape probability for dynamic systems driven by L\'{e}vy noises, {\it SIAM J. Sci. Comput.}, 36 (2014), A887--A906.

\bibitem{JinLaz} B. Jin, R. Lazarov, J. Pasciak, and W. Rundell, Variational formulation of problems involving fractional order differential operators, {\it Math. Comp.}, 84 (2015), 2665--2700.

\bibitem{Kre} R.~Kress, {\it Linear Integral Equations}, Springer-Verlag, 1989.

\bibitem{Las} N. Laskin, Fractional Schr\"{o}dinger equations, {\em Phys. Rev. E}, 66 (2002).

\bibitem{MeeSik} M.M.~Meerschaert and A.~Sikorskii, {\it Stochastic Models for Fractional Calculus}, De Gruyter Studies in Mathematics, 2011.

\bibitem{MenDu} T.~Mengesha and Q.~Du, Nonlocal constrained value problems for a linear peridynamic Navier equation, J. Elast. 116 (2014), 27--51.

\bibitem{MetBar} R.~Metzler, E.~Barkai and J.~Klafter, Deriving fractional Fokker-Planck equations from a generalized master equation, {\em Europhys. Lett.}, 46 (1999), 431--436.

\bibitem{MetKla00} R.~Metzler and J.~Klafter, The random walk's guide to anomalous diffusion: A fractional dynamics approach, {\em Phys. Rep.}, 339 (2000), 1-77.

\bibitem{MetKla04} R.~Metzler and J.~Klafter, The restaurant at the end of the random walk: recent developments in the description of anomalous transport by fractional dynamics, {\it J.  Phys. A Math. Gen.}, 37 (2004), R161-R208.

\bibitem{NezPal} E.D.~Nezza, G.~Palatucci, and E.~Valdinoci, Hitchhiker’s guide to the fractional Sobolev spaces, {\it Bull. Sci. Math.}, 136 (2012), 521--573.

\bibitem{NocOta} R.H.~Nochetto, E.~Ot\'{a}rola, A.J.~Salgado, A PDE approach to fractional diffusion in general domains: A priori error analysis, {\it Foundations Comput. Math.}, 15 (2015), 733--791.

\bibitem{Oks} B.~{\O}ksendal, {\it Stochastic Differential Equations: An Introduction with Applications}, Springer, Heidelberg, 2010.

\bibitem{Pea} K.~Pearson, The problem of the random walk, {\it Nature}, 72 (1905), 294.

\bibitem{Pod} I. Podlubny, {\em Fractional Differential Equations}, Academic Press, 1999.

\bibitem{SamKil} Samko, S., A. Kilbas, and O. Marichev, {\em Fractional Integrals and Derivatives: Theory and Applications}, Gordon and Breach, London, 1993.

\bibitem{SchBen} R. Schumer, D.A. Benson, M.M. Meerschaert, and S.W. Wheatcraft, Eulerian derivation of the fractional advection-dispersion equation, {\it J. Contaminant Hydrology}, 48 (2001), 69--88.
 
\bibitem{Sil00} S. Silling, Reformulation of elasticity theory for discontinuities and long-range forces, {\it J. Mech. Phys. Solids}, 48 (2000), 175--209.

\bibitem{SilEpt} S. Silling, M. Epton, O. Wecker, J. Xu, and E. Askari, Peridynamic states and constitutive modeling, {\it J. Elast.}, 88 (2007) 151--184.

\bibitem{StiTor} P.R. Stinga and J.L. Torrea, Extension problem and Harnack’s inequality for some fractional operators, {\it Comm. Partial Differential Equations}, 35 (2010), 2092--2122.

\bibitem{Tho} V.~Thom\'{e}e, {\em Galerkin Finite Element Methods for Parabolic Problems}, Lecture Notes in Mathematics 1054,
Springer-Verlag, New York, 1984.

\bibitem{Val} E.~Valdinoci, From the long jump random walk to the fractional Laplacian, {\em Bol. Soc. Esp. Mat. Apl. S~eMA}, 49 (2009), 33--44.

\bibitem{WanYan} H.~Wang and D.~Yang, Wellposedness of variable-coefficient conservative fractional elliptic differential equations, {\it SIAM J. Numer. Anal.}, 51 (2013), 1088-1107.

\bibitem{WanYanZhu} H.~Wang, D.~Yang, S.~Zhu, Inhomogeneous Dirichlet boundary-value problems of space-fractional diffusion equations and their finite element approximations, {\it SIAM J. Numer. Anal.}, 52 (2014), 1292--1310. 

\bibitem{WanYanZhu1} H.~Wang, D.~Yang, and S.~Zhu, Accuracy of finite element methods for boundary-value problems of steady-state fractional diffusion equations, {\it J. Sci. Comput.}, DOI 10.1007/s10915-016-0196-7.

\bibitem{WanZha} H.~Wang and X.~Zhang, A high-accuracy preserving spectral Galerkin method for the Dirichlet boundary-value problem of variable-coefficient conservative fractional diffusion equations, {\it J. Comput. Phy.}, 281 (2015), 67-81.

\bibitem{Wes} B.J.~West, {\it Fractional Calculus, View of Complexity}, Tomorrow's Science, CRC Press, Boca Raton, Florida, 2016.

\bibitem{WheMee} S. W. Wheatcraft and M. M. Meerschaert, Fractional conservation of mass, {\em Adv. Water Resour.}, 31 (2008), 1377--1381.

\bibitem{YanLiu} Q.~Yang, F.~Liu, and I.~Turner, Numerical methods for fractional partial differential equations with Riesz space fractional derivatives, {\em Appl. Math. Modelling}, 34 (2010), 200--218.

\bibitem{ZhaBen} Y.~Zhang, D. A.~Benson, M.M.~Meerschaert and E. M.~LaBolle, Space-fractional advection-dispersion equations with variable parameters: Diverse formulas, numerical solutions, and application to the MADE-site data, {\it Water Resources Research}, 43 (2007), W05439.




















\end{thebibliography}
\end{document}